\DeclareMathOperator{\coker}{coker}
\newtheorem{theorem}{Theorem}[section]
\newtheorem{lemma}[theorem]{Lemma}
\newtheorem{cor}[theorem]{Corollary}
\newtheorem{prop}[theorem]{Proposition}
\theoremstyle{definition}
\newtheorem{defn}[theorem]{Definition}
\newtheorem{eg}[theorem]{Example}
\theoremstyle{remark}
\newtheorem{rmk}[theorem]{Remark}
\numberwithin{equation}{section}
\newcommand{\abs}[1]{\lvert#1\rvert}
\newcommand{\To}{\longrightarrow}
\newcommand*{\sheafhom}{\mathscr{H}\kern -.5pt om}
\begin{document}

\title[Hilbert schemes of points on singular surfaces]{The Hilbert schemes of points on surfaces with rational double point singularities}%
\author{Xudong Zheng}%
\address{Johns Hopkins University, 3400 N. Charles St. Baltimore, MD 21218, USA}%
\email{xzheng@math.jhu.edu}%

\thanks{}%
\subjclass{}%

\keywords{Hilbert scheme of points, smoothability, maximal Cohen-Macaulay modules, rational double point surface singularities}%

\date{\today}%
%\dedicatory{}%
%\commby{}%
% ----------------------------------------------------------------

\begin{abstract}
We prove that the Hilbert scheme of points on a normal quasi-projective surface with at worst rational double point singularities is irreducible. 
\end{abstract}
\maketitle

% ----------------------------------------------------------------

The primary purpose of this article is to explore the geometry of the Hilbert scheme of points $\mathrm{Hilb}^d(X)$ for a surface $X$ with certain isolated singularities. We confirm the irreducibility of $\mathrm{Hilb}^d(X)$ for $X$ with only rational double points. 

Hilbert schemes of points on quasi-projective complex varieties, especially smooth curves and surfaces have been studied extensively in recent years. One of the common themes underlying the existing literature is that studying the Hilbert schemes $\mathrm{Hilb}^d(X)$ for varying $d$ all at once can reveal structure about the variety $X$, such as some enumerative geometric information. The smoothness of $\mathrm{Hilb}^d(X)$ often allows investigation of their further structures. On the other hand, the complexity of the singularities of $\mathrm{Hilb}^d(X)$ if $X$ is three-dimensional or higher has been one of the obstructions to generalizing results on smooth curves and surfaces. In the case that $X$ is singular, the geometry of the Hilbert schemes become more complicated. It is necessary to understand the Hilbert schemes of points of a singular variety since singular varieties are natural degenerations of smooth varieties in families. For a singular curve $C$, a naturally related object is the compactified Jacobian $J(C)$ of $C$, which admits the Abel-Jocobi map from $\mathrm{Hilb}^d(C)$, and $\mathrm{Hilb}^d(C)$ is irreducible if and only if locally any point of $C$ has embedding dimension at most 2 (locally planar singularities) (\cite{AIK, R80}). A generalized Macdonald's formula is applied in the context of the \textit{universal Hilbert schemes of points} of a family of curves leading to a proof of G\"ottsche's conjecture on the number of nodal curves on surfaces (\cite{G98, T12, S12}).  In dimension 2, if a surface $X$ has a single isolated cone singularity over a smooth curve of degree at least $5$, then $\mathrm{Hilb}^d(X)$ is reducible for sufficiently large number $d$ (\cite{MP13}). Yet it is an open question whether there is a singular surface whose Hilbert schemes of points are irreducible for any $d > 0$, particularly for surfaces with mild singularities. 

Rational double points on surfaces in some sense are among the simplest surface singularities with a complete classification. As we will see the study of reflexive modules and their deformations is essential in the proof of the main theorem, whereas we take advantage of the following facts about rational double points. First, there are finitely many isomorphism classes of indecomposable reflexive modules; and second, every module admits a free resolution over the local ring of the singularity via a matrix factorization. However, the irreducibility of the Hilbert scheme is not valid for the affine cone over a twisted cubic curve, casting doubts on generalizing our results to other quotient singularities. For a general rational surface singularity other than a rational double point, the Hilbert scheme will become reducible if $d$ is sufficiently large. We will leave the investigation of the Hilbert schemes on surfaces with minimally elliptic singularities as a separated project. 

\renewcommand*{\thetheorem}{\Alph{theorem}}
\begin{theorem}[Theorem \ref{an}]
Suppose $X$ is a quasi-projective normal surface with at worst rational double point singularities. Let $\mathrm{Hilb}^d(X)$ be the Hilbert scheme of length $d$ subschemes of $X$. Then for any positive integer $d$, $\mathrm{Hilb}^d(X)$ is irreducible of dimension $2d$.
\end{theorem}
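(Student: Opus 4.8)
The plan is to prove that the open subscheme $\mathrm{Hilb}^d(X)^{\circ}\subseteq \mathrm{Hilb}^d(X)$ parametrising finite subschemes disjoint from $\mathrm{Sing}(X)$ is dense. Since $\mathrm{Sing}(X)$ is a finite set of points, $X^{\circ}:=X\setminus\mathrm{Sing}(X)$ is a smooth quasi-projective surface and $\mathrm{Hilb}^d(X)^{\circ}=\mathrm{Hilb}^d(X^{\circ})$ is, by Fogarty's theorem, irreducible of dimension $2d$; hence such a density statement yields both assertions of the theorem simultaneously. Decomposing an arbitrary finite subscheme $Z\subset X$ along its support into a piece contained in $X^{\circ}$ and a piece supported at each singular point it meets, choosing pairwise disjoint neighbourhoods, and smoothing the pieces independently, density reduces to the local statement $(\star)$: for a rational double point $p\in X$, every finite subscheme supported at $p$ lies in the closure of the locus of finite subschemes supported on the smooth locus. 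The case of a smooth point is classical -- the punctual Hilbert scheme of a smooth surface is irreducible and contained in the closure of the reduced locus. Since every rational double point is weighted homogeneous, the induced $\mathbb{G}_m$-action on the local punctual Hilbert scheme lets one further reduce $(\star)$ to homogeneous ideals, and by Artin approximation it is enough to produce a flat \emph{formal} deformation whose general fibre avoids $p$.

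The heart of the matter is thus local. Let $(R,\mathfrak{m})$ be the complete local ring of a rational double point -- a Gorenstein hypersurface singularity $R=S/(f)$ of multiplicity two, of finite Cohen--Macaulay representation type, whose finitely many indecomposable reflexive (equivalently, maximal Cohen--Macaulay) modules are each presented by a matrix factorisation of $f$ and correspond, via the McKay correspondence, to the vertices of the resolution graph. Given an $\mathfrak{m}$-primary ideal $I\subset R$ of colength $n$, the relevant invariant is the second syzygy $N:=\Omega^2_R(R/I)$: over the two-dimensional ring $R$ this module is maximal Cohen--Macaulay, so it is one of the finitely many indecomposable reflexives up to free summands, and $R/I$ is reconstructed from $N$ together with the finitely many extra maps completing its (eventually $2$-periodic) free resolution. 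I would stratify the punctual Hilbert scheme $\mathrm{Hilb}^n_p$ by the isomorphism class of $N$ and carry out two tasks. First, show that $\mathrm{Hilb}^n_p$ is irreducible, by bounding the dimension of each stratum: here one uses that $N$, being rigid within the discrete list forced by finite CM type, contributes no moduli, while the deformation theory of the connecting maps is controlled by $\mathrm{Ext}$ groups that the crepancy of the minimal resolution $\pi\colon\widetilde X\to X$ keeps small. Second, show that the generic finite subscheme at $p$ is smoothable -- either by perturbing the matrix-factorisation/Hilbert--Burch presentation so that the resulting determinantal-type subscheme disperses into reduced points of the smooth locus, or by transporting the deformation problem through $\pi$, realising reflexive $R$-modules as pushforwards of vector bundles on the smooth surface $\widetilde X$ and deforming inside the irreducible scheme $\mathrm{Hilb}(\widetilde X)$. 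Combining the two tasks gives $(\star)$.

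The main obstacle is precisely this local analysis, and specifically the presence of strata on which $R/I$ has \emph{infinite} projective dimension (this already occurs for colength $3$ at an $A_1$ point): one must show that such a fat point, although genuinely obstructed inside $X$, still deforms to a subscheme of the smooth locus, and that the corresponding stratum does not swell into an extra component. This is exactly where the hypotheses cannot be relaxed -- finite Cohen--Macaulay type, the explicit matrix factorisations, and the crepant minimal resolution are all features special to rational double points, and for the cone over a smooth plane curve of degree $\geq 5$ or the affine cone over a twisted cubic the analogous reflexive modules are obstructed in a way that does produce extra components. The remaining technicalities -- flatness of the globally assembled smoothing family, and the passage between the formal-local and \'etale-local pictures -- are routine by comparison. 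Once $(\star)$ is established, $\mathrm{Hilb}^d(X)=\overline{\mathrm{Hilb}^d(X)^{\circ}}$ is irreducible of dimension $2d$, as claimed.
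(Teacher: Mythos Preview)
Your overall architecture matches the paper's: reduce to the local statement $(\star)$, recognise that the second syzygy $N=\Omega^2_R(R/I)=\Omega^1_R(I)$ is MCM and hence lies in the finite list of indecomposable reflexives, and stratify accordingly. But the mechanism you propose for establishing $(\star)$ is not the one the paper uses, and your sketch has a genuine gap precisely at the point you yourself flag as the main obstacle.

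The paper does \emph{not} proceed by first bounding the dimension of each stratum to conclude that the punctual Hilbert scheme is irreducible, and then smoothing the generic member. Instead it shows directly that \emph{every} subscheme supported at $p$ lies in the closure of the locus of subschemes of finite projective dimension, and then that those smooth out (the Hilbert--Burch perturbation you mention is exactly Theorem~\ref{finitehd}). The first step rests on two ingredients you do not supply. First, one must show that every reflexive module $N$ with trivial determinant in $\mathrm{Cl}(R)$ can be deformed, \emph{through reflexive modules}, to a free module of the same rank (Proposition~\ref{key}); this is a case-by-case analysis through the $ADE$ list, using Auslander--Reiten sequences and Iyama--Wemyss's description of $\mathrm{Ext}^1_R(M_i,M_j)$ to identify the generic extension. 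Finite CM type makes the list finite, but it does \emph{not} make $N$ rigid or free of moduli in any sense that yields a dimension bound---on the contrary, it is exactly the nontrivial deformations of reflexives that one exploits. Second, one must show that a one-parameter family of reflexive modules specialising to $\Omega^1_R(I)$ actually induces a flat family of colength-$d$ ideals specialising to $I$ (Proposition~\ref{family}); this requires controlling the cokernel of the family of inclusions into a free module, checking its double dual is trivial, and excising bad components to obtain flatness.

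Your two alternative routes do not substitute for these ingredients. The dimension-count route would require knowing the dimensions of all strata and their closure relations, which is at least as hard. The pull-back-to-$\widetilde X$ route is problematic because a fat point at $p$ does not lift to a subscheme of $\widetilde X$, so there is no obvious way to transport the question into $\mathrm{Hilb}(\widetilde X)$. In short, your framing is correct and you have correctly located where the difficulty lies, but the proposal as written lacks the concrete deformation-of-syzygies mechanism that actually closes the argument.
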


We collect some geometric consequences of the irreducibility of $\mathrm{Hilb}^d(X)$. First of all, one can prove Fogarty's smoothness theorem for $\mathrm{Hilb}^d(X)$ on a smooth surface using the theorem of Hilbert-Burch on free resolutions of ideal sheaves of zero-dimensional subschemes (see \cite[Chap. 7]{FGA}). In our case, we show a similar smoothness criterion:

\begin{theorem}[Theorem \ref{converse}]
Suppose $Z$ be a length $d$ subscheme of the surface $X$ with only $ADE$ singularities. Then $\mathrm{Hilb}^d(X)$ is smooth at $[Z]$ if and only if $I_Z$ has finite homological dimension over $X$.
\end{theorem}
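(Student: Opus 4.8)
The plan is to reduce the assertion to a tangent‑space count that is local at the singular points, using the dimension and irreducibility statement of Theorem~\ref{an}. The Zariski tangent space of $\mathrm{Hilb}^d(X)$ at $[Z]$ is $\mathrm{Hom}_{\mathcal O_X}(\mathcal I_Z,\mathcal O_Z)$; since $\mathcal O_Z$ is supported on the finite set $\mathrm{Supp}(Z)$, so is $\sheafhom_{\mathcal O_X}(\mathcal I_Z,\mathcal O_Z)$, whence
\[
T_{[Z]}\mathrm{Hilb}^d(X)\;\cong\;\bigoplus_{p\in\mathrm{Supp}(Z)}\mathrm{Hom}_{\mathcal O_{X,p}}(I_{Z,p},\mathcal O_{Z,p}),\qquad 2d=\sum_p 2\,\mathrm{length}_{\mathbb C}(\mathcal O_{Z,p}).
\]
By Theorem~\ref{an} the scheme $\mathrm{Hilb}^d(X)$ is irreducible of dimension $2d$, so $\dim_{[Z]}\mathrm{Hilb}^d(X)=2d$, and hence $\mathrm{Hilb}^d(X)$ is smooth at $[Z]$ if and only if $\dim_{\mathbb C}T_{[Z]}\mathrm{Hilb}^d(X)=2d$. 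Applying the same dimension statement to a length‑$\ell$ subscheme supported at one point shows that $\dim_{\mathbb C}\mathrm{Hom}_R(I,R/I)\ge 2\,\mathrm{length}(R/I)$ for every $\mathfrak m$‑primary ideal $I$ in the local ring $R=\mathcal O_{X,p}$ of any point of $X$. Since $\mathrm{pd}_R(I)<\infty$ automatically when $R$ is regular, everything reduces to the following local statement: \emph{for $R$ the local ring of an $ADE$ point and $I\subseteq R$ an $\mathfrak m$‑primary ideal, $\dim_{\mathbb C}\mathrm{Hom}_R(I,R/I)=2\,\mathrm{length}(R/I)$ if and only if $\mathrm{pd}_R(I)<\infty$.}

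For the implication ``$\Leftarrow$'' I would argue as follows. Put $M=R/I$ and $\ell=\mathrm{length}(M)$. From $0\to I\to R\to M\to 0$ one gets $\mathrm{Hom}_R(I,M)\cong\mathrm{Ext}^1_R(M,M)$, the restriction map $\mathrm{Hom}_R(R,M)\to\mathrm{Hom}_R(I,M)$ being zero because $I$ annihilates $M$. If $\mathrm{pd}_R(I)<\infty$ then $\mathrm{pd}_R(M)<\infty$, and Auslander--Buchsbaum gives $\mathrm{pd}_R(M)=2$; in particular $M$ is perfect. Because $R$ is Gorenstein of dimension $2$, local duality gives $\mathrm{Ext}^i_R(M,R)=0$ for $i\ne 2$ and $\mathrm{Ext}^2_R(M,R)\cong M^{\vee}$ (the Matlis dual, of length $\ell$), i.e. $R\mathrm{Hom}_R(M,R)\simeq M^{\vee}[-2]$; as $M$ is perfect, $R\mathrm{Hom}_R(M,M)\simeq M^{\vee}[-2]\otimes^{\mathbf L}_R M$, hence $\mathrm{Ext}^2_R(M,M)\cong M^{\vee}\otimes_R M\cong M^{\vee}$ (since $I$ kills $M^{\vee}$), of length $\ell$; also $\mathrm{Hom}_R(M,M)\cong M$ has length $\ell$ and $\mathrm{Ext}^{>2}_R(M,M)=0$. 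Finally, perfectness of the torsion module $M$ makes the Euler characteristic of $\mathrm{Hom}_R(F_\bullet,M)$ vanish for a finite free resolution $F_\bullet\to M$, so $\dim_{\mathbb C}\mathrm{Hom}_R(M,M)-\dim_{\mathbb C}\mathrm{Ext}^1_R(M,M)+\dim_{\mathbb C}\mathrm{Ext}^2_R(M,M)=0$, giving $\dim_{\mathbb C}\mathrm{Hom}_R(I,M)=\dim_{\mathbb C}\mathrm{Ext}^1_R(M,M)=2\ell$. The same computation applies verbatim at a smooth point, where $R$ is regular of dimension $2$. Summing over $p$ gives $\dim T_{[Z]}=2d$, hence smoothness of $\mathrm{Hilb}^d(X)$ at $[Z]$.

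The hard direction is ``$\Rightarrow$'', equivalently: if $\mathrm{pd}_R(I)=\infty$ then $\dim_{\mathbb C}\mathrm{Hom}_R(I,R/I)=\dim_{\mathbb C}\mathrm{Ext}^1_R(R/I,R/I)>2\ell$. Here I would use that $R$ is a hypersurface: writing $R=S/(f)$ with $(S,\mathfrak n)$ regular local of dimension $3$, the minimal free resolution of $R/I$ over $R$ is eventually $2$‑periodic, with second syzygy $\Omega^2(R/I)$ a maximal Cohen--Macaulay module presented by a matrix factorization of $f$, and $\Omega^2(R/I)$ fails to be free precisely when $\mathrm{pd}_R(R/I)=\infty$; moreover there are only finitely many indecomposable such modules. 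Combining the change‑of‑rings (Gulliksen) long exact sequence relating $\mathrm{Ext}^\ast_R(R/I,R/I)$ to $\mathrm{Ext}^\ast_S(R/I,R/I)$ through the degree‑$2$ cohomology operator — over the regular ring $S$ everything is perfect and the Euler‑characteristic identities apply — with the periodicity $\Omega^{4}(R/I)\cong\Omega^2(R/I)$ and the resulting relations among $\dim_{\mathbb C}\mathrm{Ext}^i_R(R/I,R/I)$ and $\dim_{\mathbb C}\mathrm{Hom}_R(\Omega^2(R/I),R/I)$, one should obtain $\dim_{\mathbb C}\mathrm{Hom}_R(I,R/I)=2\ell+\delta(I)$ for an explicit $\delta(I)\ge 0$ built from the matrix factorization of $\Omega^2(R/I)$ reduced modulo $I$, with $\delta(I)=0$ forcing $\Omega^2(R/I)$ to be free, i.e. $\mathrm{pd}_R(I)<\infty$. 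Pinning down $\delta(I)$ and proving its positivity — using the structure theory of maximal Cohen--Macaulay modules over rational double points — is the step I expect to be the main obstacle; geometrically it amounts to the assertion that no $[Z]$ with $I_Z$ of infinite homological dimension can be a smooth point of $\mathrm{Hilb}^d(X)$, even though such subschemes lie in the closure of the (dense) locus of subschemes of finite homological dimension.
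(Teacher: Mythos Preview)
Your reduction to a local tangent-space count via Theorem~\ref{an} and your treatment of the ``$\Leftarrow$'' direction are correct and essentially match the paper (this is Proposition~\ref{smoothness}); the Euler-characteristic argument you give is a clean repackaging of that proposition.

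The gap is in ``$\Rightarrow$''. You set up the right framework --- periodicity of the minimal resolution over the hypersurface ring, matrix factorizations of $\Omega^2(R/I)$, the Gulliksen change-of-rings machinery --- and you anticipate an identity $\dim_{\mathbb C}\mathrm{Hom}_R(I,R/I)=2\ell+\delta(I)$ with $\delta(I)\ge 0$ vanishing only when $\Omega^2(R/I)$ is free. But you do not produce $\delta(I)$, nor prove its positivity; you explicitly flag this as ``the main obstacle'', so the proof is incomplete. The change-of-rings route alone will give you a linear identity among the $\dim\mathrm{Ext}^i_R(R/I,R/I)$ and $\dim\mathrm{Hom}_R(\Omega^2(R/I),R/I)$, but extracting a strict inequality from it requires a separate input.

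What the paper does for that input is concrete and worth knowing. Write the first syzygy $\Omega(I_Z)=R^{\oplus a}\oplus\bigoplus_i M_i^{\oplus a_i}$ in terms of the finitely many indecomposable reflexive $R$-modules $M_i$. Using the $2$-periodic resolution of each $M_i$ the paper computes, with $s=\dim\mathrm{Hom}(I_Z,\mathcal O_Z)$, $s_i=\dim(M_i\otimes\mathcal O_Z)$ and $t_i=\dim\mathrm{Tor}_1(M_i,\mathcal O_Z)$, an equation of the form
\[
s+\sum_i a_i\,(s_i-t_i-d)=2d,
\]
so that $s=2d$ forces, for each $i$ with $a_i>0$, the equality $s_i=d+t_i$. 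The decisive step is then a \emph{lower bound} $s_i\ge d+1$ obtained from the McKay description: each $M_i$ sits as an $R$-direct summand of $\mathbb C[u,v]$ for $R=\mathbb C[u,v]^G$, so $M_i\otimes\mathcal O_Z$ is computed by translating a monomial basis of $\mathcal O_Z\subset\mathbb C[u,v]/\tilde I_Z$ by the generators of $M_i$ and counting coincidences; since $1\in\mathcal O_Z$ never participates in a coincidence, one gets $s_i=2d-\lambda_i$ with $\lambda_i\le d-1$, hence $s_i>d$. This forces all $a_i=0$, i.e.\ $\Omega(I_Z)$ is free. If you want to complete your approach, the missing piece is precisely such an estimate on $\dim(M_i\otimes\mathcal O_Z)$ (equivalently on the ranks of the reductions mod $I$ of the matrix-factorization matrices); the abstract Gulliksen formalism will not supply it on its own.
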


We also obtain some generalizations on results on the affine plane due to Haiman. The smooth quasi-projective variety $\mathrm{Hilb}^d(\mathbb{A}^2)$ has trivial canonical bundle, whereas:
\begin{theorem}[Theorem \ref{canonicalbundle}]
Suppose $X = \mathrm{Spec}(\mathbb{C}[x, y, z]/\langle xz - y^{n + 1}\rangle)$ is the affine surface with only $A_n$ singularity at the origin. Then $\mathrm{Hilb}^d(X)$ has a nowhere vanishing $2d$-form.
\end{theorem}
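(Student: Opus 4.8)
The plan is to write down the desired $2d$-form on the big open locus of $\mathrm{Hilb}^d(X)$ parametrising subschemes that avoid the singular points, and then to argue that it extends, with no zeros, across the rest. First, observe that $X=\{f=0\}\subset\mathbb{A}^3$ with $f=xz-y^{n+1}$ is a hypersurface with an isolated singularity, hence normal and Gorenstein, with free dualizing sheaf $\omega_X\cong\mathcal{O}_X$ generated by the Poincar\'e residue $\sigma$ of $\tfrac{dx\wedge dy\wedge dz}{f}$. On the smooth locus $X^{\mathrm{sm}}$ this residue is $\tfrac{1}{f_x}\,dy\wedge dz=\tfrac{1}{f_y}\,dz\wedge dx=\tfrac{1}{f_z}\,dx\wedge dy$ on the three charts $\{f_x\neq0\}$, $\{f_y\neq0\}$, $\{f_z\neq0\}$, which cover $X^{\mathrm{sm}}$; so $\sigma|_{X^{\mathrm{sm}}}$ is a nowhere-vanishing algebraic $2$-form, i.e. an algebraic symplectic form on the smooth surface $X^{\mathrm{sm}}$. (Equivalently, the minimal resolution $\widetilde X\to X$ is crepant, $\omega_{\widetilde X}\cong\mathcal{O}_{\widetilde X}$, and $X^{\mathrm{sm}}\cong\widetilde X\setminus E$ inherits $\sigma$.)

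Next, let $U\subseteq H:=\mathrm{Hilb}^d(X)$ be the open subscheme of $[Z]$ with $\mathrm{Supp}(Z)$ disjoint from $X^{\mathrm{sing}}$, so that $U=\mathrm{Hilb}^d(X^{\mathrm{sm}})$. For such $Z$ the $\mathcal{O}_X$-module $\mathcal{O}_Z$ is supported at regular points of $X$, hence has finite homological dimension over $X$; therefore so does $I_Z$, and Theorem \ref{converse} shows $U$ lies in the smooth locus of $H$. Moreover $U$ is dense: it contains the $2d$-dimensional family of $d$ distinct reduced points of $X^{\mathrm{sm}}$, and $H$ is irreducible of dimension $2d$ by Theorem \ref{an}. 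Since $X^{\mathrm{sm}}$ is a smooth surface carrying the symplectic form $\sigma$, Fogarty's smoothness theorem (\cite[Chap.~7]{FGA}) together with the standard construction of the holomorphic symplectic $2$-form on the Hilbert scheme of a symplectic surface produces a nondegenerate $\Sigma\in H^0(U,\Omega^2_U)$ canonically attached to $\sigma$; its top exterior power $\Sigma^{\wedge d}\in H^0(U,\Omega^{2d}_U)$ is then a nowhere-vanishing $2d$-form on $U$. Equivalently, $U$ is an open subvariety of the smooth holomorphic symplectic $2d$-fold $\mathrm{Hilb}^d(\widetilde X)$.

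Finally, I would extend $\Sigma^{\wedge d}$ to a nowhere-vanishing global section of the dualizing sheaf $\omega_H$, so that $\omega_{\mathrm{Hilb}^d(X)}\cong\mathcal{O}$. The decisive point is that $H\setminus U$ — the subschemes whose support meets a singular point $p$ — has codimension at least $2$ in $H$: stratifying by the length $e$ of the part of $Z$ supported at $p$, each stratum has dimension at most $\dim\mathrm{Hilb}^e_p(X)+2(d-e)$, so it suffices to show $\dim\mathrm{Hilb}^e_p(X)\leq 2(e-1)$ for the punctual Hilbert scheme at an $A_n$ point. I would obtain this bound from the $\mathbb{G}_m$-action on $X$ contracting it to $p$ (the punctual locus is the fixed/attracting locus, so Bialynicki--Birula theory forces its dimension strictly below $\dim\mathrm{Hilb}^e(X)=2e$, by at least the dimension of a general orbit), or from the dimension estimates already used to prove Theorem \ref{an}. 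Granting this, and that $H$ is normal (it is irreducible and generically smooth, and the codimension bound supplies $R_1$, so Serre's criterion applies once one knows $S_2$), the rational section $\Sigma^{\wedge d}$ of the rank-one reflexive sheaf $\omega_H$ has divisor supported on $H\setminus U$, hence trivial divisor, and therefore extends to a nowhere-vanishing section.

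The hard part is the codimension estimate in the last step. At an $A_n$ point the punctual Hilbert scheme is strictly larger than in the smooth case — already $\mathrm{Hilb}^2_p(X)=\mathbb{P}(T_pX)=\mathbb{P}^2$, since $X$ has $3$-dimensional Zariski tangent space at $p$ — so the bound $\dim\mathrm{Hilb}^e_p(X)\leq 2(e-1)$ is sharp and must use the $A_n$ structure essentially (matrix factorizations, finitely many reflexive modules); indeed it has to fail for worse rational singularities such as the cone over a twisted cubic, in agreement with the introduction. A secondary technical point to pin down is the normality (or at least reducedness together with the $S_2$ property) of $\mathrm{Hilb}^d(X)$, needed to promote the form on the smooth locus to a genuine generator of $\omega_{\mathrm{Hilb}^d(X)}$.
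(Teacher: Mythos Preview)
Your outline --- construct the form on a big open and extend across a codimension-$2$ complement --- is the same as the paper's, but the paper chooses a different open set that makes both of your acknowledged gaps disappear.

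Rather than $U=\mathrm{Hilb}^d(X^{\mathrm{sm}})$, the paper follows Haiman and takes $U_x$ (resp.\ $U_z$) to be the locus of $[Z]$ for which $1,x,\dots,x^{d-1}$ (resp.\ powers of $z$) span $\mathcal{O}_Z$. On $U_x$ the ideal $I_Z$ is generated by $x^d-\sum a_ix^i$, $y-\sum b_ix^i$, $z-\sum c_ix^i$, and the hypersurface equation $xz=y^{n+1}$ solves the $c_i$ in terms of the $a_i,b_i$; this exhibits $U_x$ as an explicit open subset of $\mathbb{A}^{2d}$. The form is then written down as $da_0\wedge\cdots\wedge db_{d-1}/a_0$ and identified on the reduced locus with the $S_d$-invariant residue form $\bigwedge_i dx_i\wedge dy_i\big/\prod_i(\partial f/\partial z_i)$. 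The codimension-$2$ bound for the complement of $U_x\cup U_z$ (Lemma~\ref{openset}) is proved by a direct incidence count on configurations of points, with no appeal to punctual Hilbert schemes or to normality of $H_d$. Since $U_x\cup U_z$ is already smooth, Hartogs on the smooth locus handles the extension.

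Your route genuinely needs $\dim\mathrm{Hilb}^e_p(X)\le 2(e-1)$, and neither argument you sketch supplies it. A single $\mathbb{G}_m$-contraction has one-dimensional generic orbits, so Bia{\l}ynicki--Birula gives codimension $\geq 1$ at best; you would need the full $2$-torus on the toric $A_n$ surface, and even then converting orbit dimension into a fiberwise bound over the Hilbert--Chow map is real work not carried out here. The appeal to ``dimension estimates already used to prove Theorem~\ref{an}'' is misplaced: that proof runs entirely through deformations of syzygy modules to free modules (Proposition~\ref{key} and Proposition~\ref{family}) and contains no bound on $\dim\mathrm{Hilb}^e_p(X)$. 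So the codimension-$2$ step, which you correctly identify as the crux, remains open in your approach, whereas the paper's explicit choice of charts bypasses it entirely.
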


Haiman also shows that $\mathrm{Hilb}^d(\mathbb{A}^2)$ is isomorphic to the blow-up of the symmetric product $\mathbb{A}^{2(d)}$ along some ideal, where this blow-up is identified with the Hilbert-Chow morphism. We obtain analogous statement for the affine quadric cone $Q = \mathrm{Spec}(\mathbb{C}[x, y, z]/\langle xz - y^{2}\rangle)$. 
\begin{theorem}[Theorem \ref{blowup}]
The Hilbert scheme $\mathrm{Hilb}^d(Q)$ together with the Hilbert-Chow morphism is isomorphic to the blow-up of the $d$-th symmetric product $Q^{(d)}$ along the ideal consisting of products of two $S_d$-alternating functions on $Q$.
\end{theorem}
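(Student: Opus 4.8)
Write $Q=\mathrm{Spec}\,B$ with $B=\mathbb{C}[x,y,z]/\langle xz-y^{2}\rangle$, let $R=B^{\otimes d}=\mathcal{O}(Q^{d})$ carry its permutation $S_{d}$-action, and put $A=R^{S_{d}}=\mathcal{O}(Q^{(d)})$ and $A^{\epsilon}=R_{\epsilon}\subseteq R$, the $S_{d}$-alternating functions on $Q^{d}$; for $n\ge1$ let $(A^{\epsilon})^{n}$ denote the $A$-submodule of $R$ spanned by $n$-fold products of elements of $A^{\epsilon}$, so that $J:=(A^{\epsilon})^{2}$ is the ideal in the statement. Since $(A^{\epsilon})^{n+1}=A^{\epsilon}\cdot(A^{\epsilon})^{n}$, the graded $A$-algebra $S=\bigoplus_{n\ge0}(A^{\epsilon})^{n}$ is generated in degree $1$, and its second Veronese subalgebra is the Rees algebra $\bigoplus_{m\ge0}J^{m}$; hence
\[
\mathrm{Proj}_{Q^{(d)}}(S)\;\cong\;\mathrm{Proj}_{Q^{(d)}}\Big(\bigoplus_{m\ge0}J^{m}\Big)\;=\;\mathrm{Bl}_{J}\,Q^{(d)},
\]
the morphism down to $Q^{(d)}$ being the blow-down. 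So it suffices to identify $\mathrm{Hilb}^{d}(Q)$ with $\mathrm{Proj}_{Q^{(d)}}(S)$ as schemes over $Q^{(d)}$, the Hilbert--Chow morphism $\rho\colon\mathrm{Hilb}^{d}(Q)\to Q^{(d)}$ becoming the structure morphism. The plan is to transport Haiman's proof of the $\mathbb{A}^{2}$-analogue to this setting, substituting the irreducibility of $\mathrm{Hilb}^{d}(Q)$ (Theorem~\ref{an}) and the matrix-factorization description of $B$-modules used earlier in the paper for the smoothness of $\mathrm{Hilb}^{d}(\mathbb{A}^{2})$ that he uses.

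Set $H=\mathrm{Hilb}^{d}(Q)$, let $\mathcal{Z}\subseteq H\times Q$ be the universal subscheme, and let $\mathcal{B}=(\mathrm{pr}_{H})_{*}\mathcal{O}_{\mathcal{Z}}$ be the tautological rank-$d$ bundle, a quotient of $B\otimes\mathcal{O}_{H}$. Put $L=\bigwedge^{d}\mathcal{B}$: it is a quotient of $(\bigwedge^{d}B)\otimes\mathcal{O}_{H}$, hence globally generated, and for $f_{1},\dots,f_{d}\in B$ the resulting ``Slater'' section $s_{f_{1},\dots,f_{d}}$ of $L$ restricts, over the open locus $H^{\circ}$ of reduced subschemes, to the alternating function $\det\bigl(f_{j}(q_{i})\bigr)_{i,j}$ of $(q_{1},\dots,q_{d})\in Q^{d}$; since $R=B^{\otimes d}$ is spanned by pure tensors and the antisymmetrization of a pure tensor is such a determinant, these sections span $A^{\epsilon}$. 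I would then prove: (1) $\rho$ is projective and birational, so — $Q^{(d)}$ being normal, as a quotient of the normal $Q^{d}$ — $\rho_{*}\mathcal{O}_{H}=\mathcal{O}_{Q^{(d)}}$ and $\rho$ has connected fibers; (2) $L$ is $\rho$-ample; (3) $\rho_{*}L^{n}\cong\widetilde{(A^{\epsilon})^{n}}$ for all $n\ge0$, compatibly with the algebra structures, the tilde denoting the sheaf on the affine $Q^{(d)}$ attached to an $A$-module. Granting (1)--(3), $\rho$-ampleness gives $H\cong\mathrm{Proj}_{Q^{(d)}}\bigl(\bigoplus_{n}\rho_{*}L^{n}\bigr)$ canonically, which by (3) is $\mathrm{Proj}_{Q^{(d)}}(S)$, and the first paragraph finishes the argument.

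Item (1) is immediate: Hilbert--Chow is always projective; $H$ is irreducible of dimension $2d$ by Theorem~\ref{an} and $\rho$ is one-to-one over the dense locus of reduced configurations, hence birational; normality of $Q^{(d)}$ then gives $\rho_{*}\mathcal{O}_{H}=\mathcal{O}_{Q^{(d)}}$ and connected fibers. For (3): on $H^{\circ}$, which under $\rho$ is isomorphic to $(Q^{d}\setminus\Delta)/S_{d}$ with $\Delta$ the big diagonal, the pullback of $L$ to $Q^{d}\setminus\Delta$ is $\mathcal{O}$ with $S_{d}$-linearization twisted by the sign character; as $Q^{d}$ is normal and $\Delta$ has codimension $2$ (because $Q$ is a surface), Hartogs gives $H^{0}(H^{\circ},L^{n})=R_{\epsilon^{n}}$, which is $A$ for $n$ even and $A^{\epsilon}$ for $n$ odd. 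By irreducibility the restriction $H^{0}(H,L^{n})\to H^{0}(H^{\circ},L^{n})$ is injective; the Slater sections exhibit the submodule $(A^{\epsilon})^{n}$, and the remaining content is the reverse inclusion $H^{0}(H,L^{n})\subseteq(A^{\epsilon})^{n}$ — equivalently, that every local section of $L^{n}$ vanishes along the exceptional locus of $\rho$ to the order forced by lying in $(A^{\epsilon})^{n}$. This, with (2), is the crux. The approach is to work étale-locally on $Q^{(d)}$ near a point $\mu=\sum_{i}n_{i}[p_{i}]$, where $Q^{(d)}$ is a product of symmetric powers and the fiber of $\rho$ is $\prod_{i}\mathrm{Hilb}^{\,n_{i}}_{p_{i}}(Q)$: over the smooth points $p_{i}$ this reduces to Haiman's statement, while over the $A_{1}$ point $0\in Q$ one analyzes the punctual Hilbert scheme $\mathrm{Hilb}^{\,n}_{0}(Q)$ through the classification of maximal Cohen--Macaulay $B$-modules and the matrix factorizations of $xz-y^{2}$ from the proof of Theorem~\ref{an}, showing that $L$ is ample on this fiber and that $\rho_{*}L$ is generated there by Slater determinants.

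The step I expect to be hardest is exactly this local analysis at the singular point — the $\rho$-ampleness of $L$ and the precise form of $\rho_{*}L^{n}$ over a point of $Q^{(d)}$ whose fiber involves $\mathrm{Hilb}^{\,n}_{0}(Q)$ — since, unlike in Haiman's case, $\mathrm{Hilb}^{d}(Q)$ and these punctual fibers are singular, and smoothness must be replaced by the explicit reflexive-module/matrix-factorization picture for $A_{1}$. As sanity checks, $d=1$ gives $\mathrm{Hilb}^{1}(Q)=Q=Q^{(1)}$ with $J=(1)$, and the case $d=2$ can be verified directly.
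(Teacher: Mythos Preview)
Your plan is coherent in outline, but it departs from both the paper's proof and Haiman's original in a way that creates a real gap at exactly the point you flag as hardest. The paper does \emph{not} argue via $\rho$-ampleness of $L=\wedge^d\mathcal{B}$ and a computation of $\rho_*L^n$. Instead it follows Haiman's explicit route: it builds an open affine cover $\{\mathrm{Hilb}^d_\Delta(Q)\}_\Delta$ indexed by staircases $\Delta\subset\Lambda$, writes down explicit coordinates $C^{r,s}_{a,b}$ on each chart (Proposition~\ref{cab}), and proves on the dense locus $U_\Delta$ of reduced subschemes the identity
\[
h^*\!\left(\frac{\det(D)\det(\Delta)}{\det^2(\Delta)}\right)=\det\bigl[C^{r_i,s_i}_{a_j,b_j}\bigr],
\]
which extends to a regular function on all of $\mathrm{Hilb}^d_\Delta(Q)$. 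This shows $h^*(B^2)$ is principal on each chart, so the universal property of blow-up produces $f\colon\mathrm{Hilb}^d(Q)\to\mathrm{Proj}(\mathbf{B}^{[2]})$. One then checks $f$ is a closed immersion by observing that each generator $C^{r,s}_{a,b}$ is itself a ratio $\det(D)/\det(\Delta)$ for a suitable $D$, so $f^*$ is surjective on structure sheaves. No fiberwise analysis over the singular point, and no appeal to MCM modules or matrix factorizations, is needed; the coordinate charts handle the singular locus uniformly. The identification $\wedge^d\mathcal{T}_d\cong\mathcal{O}(1)$ is recorded only \emph{afterwards} as a corollary.

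In your approach, steps (2) and the hard direction of (3) are the whole content, and you have not actually proved them. The suggestion that the classification of reflexive $B$-modules and the matrix factorization of $xz-y^2$ will yield $\rho$-ampleness of $L$ on the punctual fibers $\mathrm{Hilb}^n_0(Q)$, or will pin down $\rho_*L^n$ there, is not substantiated: those tools control syzygies and deformations of ideals (which is how the paper uses them for irreducibility), not positivity of $L$ on the fiber or the image of $H^0(H,L^n)$ inside $A^\epsilon$. A secondary issue: your injectivity step $H^0(H,L^n)\hookrightarrow H^0(H^\circ,L^n)$ ``by irreducibility'' presupposes $H$ is reduced, which is not established a priori; the paper's argument actually yields reducedness as a byproduct, since $f$ is shown to be a closed immersion into the reduced scheme $\mathrm{Proj}(\mathbf{B})$. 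If you want to salvage your outline, the cleanest fix is precisely to introduce the staircase charts and the determinant identity above: that simultaneously gives local principality of $h^*(B^2)$ (hence the map to the blow-up) and, dually, ampleness of $L$ together with enough sections---which is what your (2) and (3) amount to.
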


The study of deformations of closed subschemes that are supported at a closed point in a variety relies heavily on the formal analytic properties of a neighborhood of the point. We note that this is allowable for our purpose as zero-dimensional schemes are algebraic in any event and embedded deformation of closed subschemes is formal in essence.

The paper is organized as follows. In section 1, we fix terminologies on Hilbert schemes of points and review the structure of reflexive modules on $ADE$ surface singularities. Section 2 contains the results on deformations of reflexive modules, which, as syzygy modules of zero-dimensional subschemes, will give rise to deformations of the corresponding subschemes. Section 3 contains the main theorem on the irreducibility of the Hilbert scheme on surfaces with $ADE$ singularities (Theorem \ref{an}). Section 4 collects a few geometric applications of the irreducibility of $\mathrm{Hilb}^d(X)$. In particular, if $X$ is the affine quadric cone in 3-space, the Hilbert-Chow morphism has an alternative interpretation analogous to the case of the affine plane. In the last section, we provide some examples of reducible Hilbert schemes, in particular, of at least 8 points on the cone over a twisted cubic curve.

\textbf{Acknowledgement}. The results on the $A_n$ singularities case have appeared as parts of the author's Ph.D. Thesis at the University of Illinois at Chicago. It is a pleasure to acknowledge the overwhelming influence of Professors Lawrence Ein and Kevin Tucker on this work. He thanks Professors Frank-Olaf Schreyer and Robin Hartshorne for inspiring conversations contributing to the paper. He is grateful to Professors Hailong Dao, Trond Gustavsen, Akira Ishii, and Michael Wemyss for answering questions related to the paper in email correspondence. 

\section{Preliminary and motivations}

Throughout this article, we work over the field of complex numbers within the category $\mathbb{C}-\mathrm{\underline{Sch}}$ of locally Noetherian $\mathbb{C}$-schemes. Fiber products are over $\mathrm{Spec}(\mathbb{C})$ if not specified.
\renewcommand*{\thetheorem}{\thesection.\arabic{theorem}}

\subsection{Definitions and background}
Suppose $X$ is a (quasi)-projective scheme and $P(x) \in \mathbb{Q}[x]$ a numerical polynomial.

The \textit{Hilbert functor} on $X$ (with Hilbert polynomial $P$) $\underline{Hilb}^P_X$ is the contravariant functor from the category $\mathbb{C}-\mathrm{\underline{Sch}}$ to the category $\mathrm{\underline{Set}}$ of sets, which assigns to each scheme $U$ the set of closed $U$-flat subschemes $Z \subset U \times X$ that have Hilbert polynimal $P$; and for a morphism $\phi: U \to V \in \mathrm{Mor}_{\mathbb{C}-\underline{\mathrm{Sch}}}(U, V)$, $\underline{Hilb}_X(\phi)$ is the induced map of sets via fiber product. The \textit{Hilbert scheme of $X$ with Hilbert polynomial $P$}, denoted by $\mathrm{Hilb}^P(X)$, is the quasi-projective scheme which represents the functor $\underline{Hilb}^P_X$. In particular, if $P(x) = d \in \mathbb{N}$ is the constant polynomial, the scheme $\mathrm{Hilb}^d(X)$ is the \textit{Hilbert scheme of $d$ points} of $X$. There is the \textit{universal family} $\widetilde{\mathrm{Hilb}}^d(X) \xrightarrow{\pi} \mathrm{Hilb}^d(X)$.

Let $S_d$ be the symmetric group in $d$ elements, and let $X^d = X \times \dots \times X$ be the $d$-fold fiber product of $X$. The group $S_d$ acts on $X^d$ by permuting the factors. The \textit{$d$-th symmetric product} of $X$ is the projective scheme of $S_d$-quotient of $X^d$, denoted by $X^{(d)}$. Explicitly, fixing an embedding $X \hookrightarrow \mathbb{P}(\mathcal{E})$, by geometric invariant theory one can equip the set-theoretical quotient $X^d/S_d$ with a quasi-projective scheme structure. The closed points of $X^{(d)}$ are the effective 0-cycles of length $d$, hence $X^{(d)}$ is regarded as a Chow variety of 0-cycles. The symmetric product $X^{(d)}$ coincides with Rydh's \textit{scheme of divided powers} $\Gamma^d(X)$ as over $\mathbb{C}$. Over an arbitrary field $k$, for an affine scheme $Y = \mathrm{Spec}(A)$ the affine scheme of divided powers of $Y$ is the spectrum of the algebra of divided powers $\Gamma^d_k(A)$, which is not necessarily isomorphic to the $d$-th symmetric algebra $\mathrm{Sym}^d(A)$ (see \cite{Rydh08} for details).

The \textit{Hilbert-Chow morphism} is the following morphism, denoted by $h: \mathrm{Hilb}^d(X) \to X^{(d)}$. Suppose $Z \subset X$ is a length $d$ scheme supported at $r$ distinct points $z_1, \dots, z_r \in X$, giving rise to a closed point $[Z] \in \mathrm{Hilb}^d(X)$. Then $h([Z]) = \sum_{i = 1}^rm_iz_i$, where $m_i = \dim_k \mathcal{O}_{Z, z_i}$. This description of the Hilbert-Chow morphism is only on the set of closed points. The target $X^{(d)}$ should adapt the reduced scheme structure. For a precise definition see \cite[Chap. 7]{FGA}.

If $X$ is a non-singular connected curve, then $h$ is an isomorphism between two non-singular varieties. In particular, the cohomology ring of $X^{(d)}$ is expressed in terms of the cohomology ring of the curve $X$ by Macdonald's formula. If $X$ is a non-singular connected surface, then $\mathrm{Hilb}^d(X)$ is non-singular of dimension $2d$ and is a crepant resolution of singularities of $X^{(d)}$ via $h$ (\cite{F68}). 

Let $\mathcal{I} \subset \mathcal{O}_X$ be an ideal sheaf which defines a zero-dimensional subscheme $Z$ of $X$ of length $d = l(Z)$, i.e., $\dim_{\mathbb{C}}\mathcal{O}_{X}/\mathcal{I} = d$. We say that the ideal $I$ has \textit{colength} $d$. The \textit{Hilbert function} of $\mathcal{I}$ is the integral-valued vector $h = (h_0, h_1, \dots)$ where $h_i = \dim_\mathbb{C} H^0(X, \mathcal{O}_X(i)) - \dim_{\mathbb{C}} H^0 (X, \mathcal{I}(i))$ for $i \geq 0$. Localizing at a closed point $x \in X$, the \textit{punctual Hilbert scheme} $\mathrm{Hilb}^d(X, x)$ is the reduced closed subscheme of $\mathrm{Hilb}^d(X)$ parameterizing length $d$ subschemes of $X$ supported at $x$. If $x \in X$ is a non-singular point on a surface, then $\mathrm{Hilb}^d(X, x)$ is irreducible of dimension $d - 1$ (\cite{B77, I72, ES98, EL99}). 

When $X = \mathbb{P}^2$, the schemes $\mathrm{Hilb}^d(\mathbb{P}^2)$ and $\mathrm{Hilb}^d(\mathbb{P}^2, x)$ are stratified into affine spaces whose numbers of affine cells of each dimension compute the ranks of their Borel-Moore homology groups (\cite{B77, ES87, ES88, G88}). The Betti numbers of $\mathrm{Hilb}^d(X)$, Chow groups and the Chow motive with $\mathbb{Q}$-coefficients were computed for an arbitrary smooth projective surface (\cite{G90, deCM}). If $X$ is a $K3$ surface, $\mathrm{Hilb}^d(X)$ is a hyperK\"ahler manifold (\cite{B83, F83}). Write $\mathbf{H}_d \coloneqq H^*(\mathrm{Hilb}^d(X), \mathbb{Q})$ for the rational coefficient cohomology of the Hilbert scheme of points and $\mathbf{H} \coloneqq \bigoplus_{n \geq 0} \mathbf{H}_d$, then $\mathbf{H}$ is an irreducible module over a Heisenberg algebra (\cite{N97, G96}). In the case of the affine plane, the natural torus-action on $\mathbb{A}^2$ extends to $\mathrm{Hilb}^d(\mathbb{A}^2)$. The higher cohomology groups of twists of the tautological line bundles on $\mathrm{Hilb}^d(\mathbb{A}^2, 0)$ vanish. There is an Atiyah-Bott formula for the Euler characteristic of the twists of the tautological line bundle on $\mathrm{Hilb}^d(\mathbb{A}^2, 0)$ which has seen some combinatorial applications including the Macdonald positivity conjecture and the $n !$ conjecture (\cite{H98, H01}). 

\subsection{Maximal Cohen-Macaulay modules}

In this subsection we review some well-known facts about maximal Cohen-Macaulay modules over $ADE$ singularities. The standard references are \cite{GV83, AV85, E85, A86}.

\begin{defn} Let $(B, \mathfrak{m}, k)$ be a Noetherian local ring, and $M$ a finitely generated $B$-module. The $B$-module $M$ is said to be \textit{maximal Cohen-Macaulay}, or \textit{MCM}, if $M$ satisfies $\mathrm{depth}(M) = \dim B$. $M$ is said to be \textit{reflexive}, if $M \cong M^{**}$, where the functor $(\bullet)^*$ is the dual $\mathrm{Hom}_B(\bullet, B)$ on the category of $B$-modules.
\end{defn}

\begin{eg}
Let $B$ be any two-dimensional normal domain and $I \subset B$ an ideal of finite colength. Then the first syzygy module of $I$ is MCM over $B$ and is reflexive.
\end{eg}

Now let $R$ be any 2-dimensional rational double point singularity (i.e., the completion $\hat{R}$ is a two-dimensional normal local domain with singularity one of the types $A_n, D_n, E_6, E_7$ or $E_8$), then $R$ is a hypersurface ring, in particular, Gorenstein. What follows about free resolutions of finitely generated $R$-modules holds true for any \textit{abstract hypersurface} in the sense of \cite[Remark 6.2]{E80} (the maximal ideal of $R$ is minimally generated by $\dim R + 1$ elements, and the zero ideal of $R$ is analytically unmixed, or alternatively, $\hat{R}$ is the quotient of a regular local ring modulo a principal ideal. Minimal free $R$-resolution of any finitely generated $R$-module, if not finite, will be periodic of period 2 after at most 3 steps (\cite[Theorem 6.1]{E80}). A periodic resolution of period 2 is necessarily given by a \textit{matrix factorization} associated to the hypersurface. 

\begin{eg}\label{a1}
In the case of the quadric cone $R = \mathbb{C}[x, y, z]/\langle xz - y^2 \rangle$ and $Q = \mathrm{Spec}(R)$ there is only one non-free indecomposable MCM $R$-module up to isomorphism which is given by the matrix factorizations $M = M(\phi, \psi)$, where
\[\phi =
\begin{bmatrix}
x & -y \\
-y & z \\
\end{bmatrix},
\quad \textrm{ and } \quad
\psi =
\begin{bmatrix}
z & y \\
y & x \\
\end{bmatrix},
\]
and this MCM module is given by the cokernel of either one of the two matrices above. We denote the cokernel of $\phi$ by $P$. Note that $P$ has rank 1 corresponding to a Weil divisor. Geometrically, $P$ is isomorphic to the fractional ideal defining a reduced line $L$ through the singular point on $Q$, which can be understood as the first Chern class of $P$ in $\mathrm{Cl}(R)$. The projective line $\bar{L}$ as a Weil divisor in the projective closure $\bar{Q}$ is not Cartier, but $2\bar{L}$ is Cartier, i.e., $[P]$ is $2$-torsion in $\mathrm{Cl}(R)$. The minimal $R$-free resolution of $P$ is $\mathbf{F^{\bullet}}(P): \dots \to R^{\oplus 2} \xrightarrow{\phi} R^{\oplus 2} \xrightarrow{\psi} \dots \xrightarrow{\psi} R^{\oplus 2} \xrightarrow{\phi} R^{\oplus 2} \to 0$. Truncating the sequence we have a short exact sequence
\begin{equation}\label{extensionP}
0 \to P \to R^{\oplus 2} \to P \to 0.
\end{equation}
This sequence is a special case that will be studied in detail.
\end{eg}

In general, the classification of indecomposable reflexive modules over 2-dimensional rational double point singularities are fully understood (\cite[Theorem 1.11]{AV85}). Even more generally, for any two-dimensional quotient singularity given by a finite group $G \leq \mathrm{GL}(2, \mathbb{C})$ there is a one-to-one correspondence between the finite set of isomorphism classes of indecomposable reflexive modules and that of isomorphism classes of irreducible representations of the finite group $G$. In the case that $G \leq \mathrm{SL}(2, \mathbb{C})$, there is a third finite set which yields a bijection with the preceding two, namely the vertices of the dual graph of the intersection pairing of the exceptional divisor in the minimal resolution of singularity, the Dynkin diagram. This is referred as the \textit{geometric McKay correspondence}. All the indecomposable reflexive $R$-modules can be exhausted in the concrete way as follows: regarding the 2-dimensional regular ring $\mathbb{C}[u, v]$ as a regular representation of the finite group $G$ over the ring of invariants $R = \mathbb{C}[u, v]^G$, then $\mathbb{C}[u, v]$ decomposes into the direct sum of irreducible representations of $G$ over $R$ with each irreducible representation appearing exactly once, then these are precisely all of the indecomposable reflexive $R$-modules.

\begin{eg}
Explicitly in the $A_n$ singularity case, we can express $R$ as $R = \mathbb{C}[u^{n + 1}, uv, v^{n + 1}]$. Then any nontrivial indecomposable reflexive $R$-module has the form $M_i = R\langle u^av^b \mid a, b \in \mathbb{N}, a + b \equiv i (\mod n + 1)\rangle$ for $i = 1, \dots, n$. On the other hand, as an irreducible representation of $G$, each module $M_i$ as a submodule of $\mathbb{C}[u, v]$ is isomorphic to a fractional ideal in $\mathbb{C}(u, v)$, $M_i \cong R\langle 1, u^i/v^{n + 1 - i}\rangle$ for $i = 1, \dots, n$. 
\end{eg}

\section{Deformations of MCM modules}

Suppose $(X, p)$ is the germ of a rational double point. In this section, we prove the following results in the context of an arbitrary 2-dimensional rational double points: (1) given the syzygy module $M = \Omega(I_Z)$ of a zero-dimensional subscheme $Z$ on $X$, a deformation of $M$ induces a flat embedded deformation of $Z$ in $X$ (Proposition \ref{family}); (2) closed subschemes of finite projective dimension on $X$ is smoothable (Theorem \ref{finitehd}). Both statements contribute to the core of the proof of the main theorem on the irreducibility of the Hilbert schemes (Theorem \ref{an}). And (3), if the socle dimension $\mathrm{soc}(\mathcal{O}_Z)$ of a zero-dimensional subscheme $Z \subset X$ satisfies the equality $e(I_Z) - \mathrm{soc}(\mathcal{O}_Z) = 1$ then $Z$ has finite projective dimension. We also include some explicit studies of the extensions of reflexive modules. 

The first proposition motivates the entire investigation of subschemes of finite homological dimension. 

\begin{prop}\label{smoothness}
Suppose a subscheme $Z$ has finite projective dimension on the surface $X$, then the Zariski tangent space of $\mathrm{Hilb}^d(X)$ at $[Z]$ has dimension $2d$.
\begin{proof}
The proof of smoothness of the Hilbert scheme of points on a smooth surface works verbatim. We write $\mathcal{I}_Z$ for the ideal sheaf of $Z$ in $\mathcal{O}_{X}$. 
Under the assumption of the finiteness of the homological dimension of $Z$, by the Auslander-Buchsbaum formula and the theorem of Hilbert-Burch-Schaps, a minimal free resolution of $\mathcal{I}_Z$ of $Z$ takes the form:
\[
0 \to \mathcal{O}_{X}^{\oplus r} \to \mathcal{O}_{X}^{\oplus r + 1} \to \mathcal{I}_Z \to 0
\]
for some positive integer $r$. We can compute that $\mathrm{Ext}^2_{X}(\mathcal{I}_Z, \mathcal{O}_Z) = 0$ and $\mathrm{Ext}^1_{X}(\mathcal{I}_Z, \mathcal{O}_Z) \cong \mathrm{Ext}^1_{X}(\mathcal{I}_Z, \mathcal{O}_{X}) \cong \omega_Z$, which imply that $\chi(\mathcal{O}_Z, \mathcal{O}_Z) = 0$. Hence the Zariski tangent space $T_{[Z]}\mathrm{Hilb}^d(X) \cong \mathrm{Hom}_{X}(\mathcal{I}_Z, \mathcal{O}_Z) \cong \mathrm{Ext}^1_{X}(\mathcal{O}_Z, \mathcal{O}_Z)$ has dimension $2d$. 
\end{proof}
\end{prop}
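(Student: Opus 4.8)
The plan is to follow Fogarty's computation of the tangent space to the Hilbert scheme of a smooth surface, supplying the one extra input needed because $Z$ may be supported on the singular locus of $X$. For any quasi-projective $X$ the Zariski tangent space to $\mathrm{Hilb}^d(X)$ at $[Z]$ is $\mathrm{Hom}_X(\mathcal{I}_Z, \mathcal{O}_Z)$ (this is the deformation space of the closed immersion $Z \hookrightarrow X$ and does not require $X$ smooth), so the goal is $\dim_{\mathbb{C}}\mathrm{Hom}_X(\mathcal{I}_Z, \mathcal{O}_Z) = 2d$. Since $\mathcal{O}_Z$ is supported on the finite set $\mathrm{Supp}(Z)$, the local-to-global spectral sequence for $\mathscr{E}xt$ degenerates and $\mathrm{Ext}^i_X(\mathcal{I}_Z, \mathcal{O}_Z) \cong \bigoplus_{p \in Z} \mathrm{Ext}^i_{\mathcal{O}_{X,p}}(\mathcal{I}_{Z,p}, \mathcal{O}_{Z,p})$; thus everything reduces to the local statement that over $A = \mathcal{O}_{X,p}$ — a two-dimensional rational double point, hence a hypersurface and in particular Gorenstein — and for $I \subseteq A$ with $\ell := \ell_A(A/I) < \infty$ and $\mathrm{pd}_A(A/I) < \infty$, one has $\ell_A\bigl(\mathrm{Hom}_A(I, A/I)\bigr) = 2\ell$. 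Summing over $p \in Z$ then gives $2d$.

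First I would fix the shape of a free resolution of $I$. By the Auslander--Buchsbaum formula, $\mathrm{pd}_A(A/I) = \mathrm{depth}\,A - \mathrm{depth}(A/I) = 2$, hence $\mathrm{pd}_A(I) = 1$, and the theorem of Hilbert--Burch--Schaps produces a minimal resolution $0 \to A^{\oplus r} \xrightarrow{\varphi} A^{\oplus(r+1)} \to I \to 0$ for some $r \geq 1$. Applying $\mathrm{Hom}_A(-, A/I)$ gives the exact sequence
\[
0 \to \mathrm{Hom}_A(I, A/I) \to (A/I)^{\oplus(r+1)} \xrightarrow{\varphi^{t}} (A/I)^{\oplus r} \to \mathrm{Ext}^1_A(I, A/I) \to 0,
\]
whence $\ell_A\bigl(\mathrm{Hom}_A(I, A/I)\bigr) - \ell_A\bigl(\mathrm{Ext}^1_A(I, A/I)\bigr) = (r+1)\ell - r\ell = \ell$. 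So it remains to prove $\ell_A\bigl(\mathrm{Ext}^1_A(I, A/I)\bigr) = \ell$.

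For this I would dimension-shift. From $0 \to I \to A \to A/I \to 0$ and $\mathrm{Ext}^{\geq 1}_A(A, A/I) = 0$ one obtains $\mathrm{Ext}^1_A(I, A/I) \cong \mathrm{Ext}^2_A(A/I, A/I)$. Splicing the length-one resolution of $I$ with that sequence yields the minimal free resolution $0 \to A^{\oplus r} \xrightarrow{\varphi} A^{\oplus(r+1)} \to A \to A/I \to 0$, so $\mathrm{Ext}^2_A(A/I, N) = \mathrm{coker}\bigl(\varphi^{t}\colon N^{\oplus(r+1)} \to N^{\oplus r}\bigr)$ for every $A$-module $N$. Taking $N = A$ gives $\mathrm{Ext}^2_A(A/I, A)$, which by local duality over the Gorenstein ring $A$ (where $\omega_A \cong A$) is the canonical module $\omega_{A/I}$ of the Artinian ring $A/I$, a module of length $\ell$. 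Since $\omega_{A/I}$ is annihilated by $I$, reducing its presentation modulo $I$ identifies $\mathrm{Ext}^2_A(A/I, A/I) = \omega_{A/I} \otimes_A A/I \cong \omega_{A/I}$, still of length $\ell$. Therefore $\ell_A\bigl(\mathrm{Ext}^1_A(I, A/I)\bigr) = \ell$ and $\ell_A\bigl(\mathrm{Hom}_A(I, A/I)\bigr) = 2\ell$, as wanted.

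The one step with no analogue in the smooth case — and hence the real content — is that one must run the computation through $\mathcal{I}_Z$ rather than $\mathcal{O}_Z$: because $\mathcal{O}_Z$ itself has projective dimension $2$ over the singular surface it has no two-term free resolution, whereas $\mathcal{I}_Z$ does by Hilbert--Burch--Schaps; and one needs to know both that the relevant $\mathrm{Ext}^2$ is the canonical module of $\mathcal{O}_Z$ and that this canonical module has the same length as $\mathcal{O}_Z$ — each of which uses the Gorenstein property of rational double points. Equivalently the bookkeeping can be arranged through $\mathrm{Ext}^1_X(\mathcal{O}_Z, \mathcal{O}_Z)$, using $\chi(\mathcal{O}_Z, \mathcal{O}_Z) = 0$ together with $\mathrm{Ext}^2_X(\mathcal{O}_Z, \mathcal{O}_Z) \cong \omega_Z$ — which is the route taken in the proof given above — but the two presentations rest on exactly the same two facts.
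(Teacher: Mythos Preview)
Your proof is correct and follows essentially the same route as the paper's: both use the Hilbert--Burch--Schaps resolution of $\mathcal{I}_Z$ coming from Auslander--Buchsbaum, identify $\mathrm{Ext}^1_X(\mathcal{I}_Z,\mathcal{O}_Z)\cong\mathrm{Ext}^2_X(\mathcal{O}_Z,\mathcal{O}_Z)\cong\omega_Z$ via the Gorenstein property, and then extract $\dim\mathrm{Hom}_X(\mathcal{I}_Z,\mathcal{O}_Z)=2d$ by an Euler-characteristic count. Your version is simply more explicit---you localize, write out the four-term $\mathrm{Hom}$ sequence, and justify $\mathrm{Ext}^2_A(A/I,A/I)\cong\omega_{A/I}$ via the cokernel-tensor argument---whereas the paper compresses all of this into the assertions $\mathrm{Ext}^1_X(\mathcal{I}_Z,\mathcal{O}_Z)\cong\omega_Z$ and $\chi(\mathcal{O}_Z,\mathcal{O}_Z)=0$.
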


\begin{rmk}
The preceding proposition does not necessarily prove that the closed point $[Z]$ in $\mathrm{Hilb}^d(X)$ is a smooth point, as it is unclear whether $\mathrm{Hilb}^d(X)$ locally at $[Z]$ has dimension smaller than $2d$.
\end{rmk}
 
The key construction is given in the following proposition, which produces a family of subschemes from a family of reflexive modules. We write $X^0 = X \setminus \{p\}$ and $\mathfrak{X} = X \times \mathbb{A}^1$ for the isotrivial 1-parameter family of $X$ with two projections $\pi_X: \mathfrak{X} \to X$ and $\pi_{\mathbb{A}^1}: \mathfrak{X} \to \mathbb{A}^1$.

\begin{prop}\label{family}
Let ${M_t}$ be a family of reflexive $\mathcal{O}_{X}$-modules of constant rank $r$ over $\mathbb{A}^1$. Suppose $M_0$ is the syzygy module of the ideal $I_{0}$ of a zero-dimensional subscheme $Z_0$ of $X$ with $\mathrm{Supp}(Z_0) = \{p\}$. Then there exists an open neighborhood $U$ of $0 \in \mathbb{A}^1$ such that for any $t \in U$ the corresponding module $M_t$ is the syzygy module of the ideal $I_t$ of a zero-dimensional subscheme $Z_t$ of $X$ with the same length as $Z_0$. 
\begin{proof}
By assumption, there is a short exact sequence of $R$-modules:
\[
0 \to M_0 \xrightarrow{\phi_0} \mathcal{O}_X^{r + 1} \to I_0 \to 0.
\]
We consider the following sequence on $\mathfrak{X}$:
\[
0 \to F_1 \xrightarrow{\phi} F_0 \to \mathcal{Q} \coloneqq \mathrm{Coker}(\phi)   \to 0,
\]
which satisfies the following properties:
\begin{itemize}
\item[1. ] $F_0$ is a free $\mathcal{O}_X[t]$-module of rank $r + 1$;
\item[2. ] $F_1$ is a reflexive $\mathcal{O}_X[t]$-module of rank $r$ with $\mathrm{depth}_q(F_1) = 3$ for any closed point $q \in \mathfrak{X}$;
\item[3. ] the restriction of $\phi$ to the closed fiber $X_0$ over $0 \in \mathbb{A}^1$ is $\phi_0$.
\end{itemize}
The rest of the proof will be a series of restrictions in the affine line to an open neighborhood of 0 such that in each fiber the module $\mathcal{Q}$ restricts to an ideal of a subscheme of $X_t = X$ with constant length as $Z_0$.

To begin with, note that the support of the torsion part of $\mathcal{Q}$ in $\mathbb{A}^1$ is disjoint from 0. We let $U_1 \subset  \mathbb{A}^1$ be the complement of the torsion part of $\mathcal{Q}$ and let $\mathfrak{X}_1 = X \times U_1$ be the restricted family. Now we can assume that $\mathcal{Q}$ is a rank 1 torsion free sheaf on $\mathfrak{X}_1$ such that $\mathcal{Q} \mid_{X_0} \cong I_0$.

The double dual $\mathcal{Q}^{**}$ of $\mathcal{Q}$ with respect to $R[t]$ is a rank 1 reflexive $R[t]$-module. Hence there exists a Weil divisor $D$ of $\mathfrak{X}_1$ with $\mathcal{Q}^{**} \cong \mathcal{O}_{\mathfrak{X}_1}(D)$. Since $\mathfrak{X}$ is a trivial family of $X$ over the affine line, we have isomorphism of their first Chow groups, which surjects onto the Chow group of $\mathfrak{X}_1$:
\[
CH^1(X) \cong CH^1(\mathfrak{X}) \to CH^1(\mathfrak{X}_1) \to 0.
\]
Therefore there exists a Weil divisor $D_0$ of $X^0$ such that $\mathcal{O}_{\mathfrak{X}_1}(D) \cong \pi_X^*\mathcal{O}_{X}(D_0)$. Restricting this identification to $X^0$ we see that $D_0$ is homologous to 0, so it is with $D$. So we have $\mathcal{Q}^{**} \cong \mathcal{O}_{\mathfrak{X}_1}$.

We can write $\mathcal{Q} = \mathcal{I}_W$ as the ideal sheaf of a closed subscheme $W$ of $\mathfrak{X}_1$. This subscheme $W$ is not necessarily equidimensional and hence not finite flat over $U_1$. We will further restrict in $U_1$ to where $W$ become finite flat over the base of degree equal to the length of $Z_0$. First let $W_1$ be the union of the 2-dimensional components of $W$ (possibly empty). We see that $W_1$ is disjoint from $X_0$.

Let $\mathfrak{X}_2 \coloneqq \mathfrak{X}_1 \setminus W_1$. Restricting on $\mathfrak{X}_2$ we can assume that $W$ is a one-dimensional scheme (not necessarily reduced or irreducible). We have a presentation of $\mathcal{I}_W$:
\[
0 \to F_1 \xrightarrow{\phi} F_0 \to \mathcal{I}_W \to 0.
\]
By condition 3 on the depth of $F_1$ above, we conclude that $W$ is a Cohen-Macaulay curve. Moreover, there could be components of $W$ which are contracted by $\pi_{\mathbb{A}^1}$. These vertical components are not contracted to the closed point 0. By further restricting to a possibly smaller open neighborhood $U_2 \subset U_1$ of 0 we can assume that $W$ does not contain any vertical components and hence flat over $U_2$.

Write $\psi: W \to U_2$ for the restriction of the projection $\pi_{\mathbb{A}^1}$ from $\mathfrak{X}_2$. Note that $\psi^{-1}(0)_{\mathrm{red}} = \{(p, 0)\}$, and that both $\mathfrak{X}_2$ and $U_2$ are affine schemes. We can apply Zariski's main theorem to obtain a factorization of $\psi$ through $\tilde{\psi}: Z \to U_2$ and we can find an open neighborhood $\mathcal{U}$ of $(p, 0)$ in $W$ such that the induced morphism $\mathcal{U} \to Z$ is an open immersion. 
\end{proof}
\end{prop}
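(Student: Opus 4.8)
\emph{Strategy.} My plan is to spread the given first-syzygy presentation of $M_0$ out to a presentation of the whole family over $\mathbb{A}^1$, and then, through a finite sequence of shrinkings of the base around $0$, to whittle the resulting total space down to a genuine finite flat family of zero-dimensional subschemes whose fibers are cut out by the $M_t$.

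\emph{Spreading out.} First I would record the family $\{M_t\}$ as a single coherent sheaf $\mathcal{M}$ on $\mathfrak{X}=X\times\mathbb{A}^1$, flat over $\mathbb{A}^1$, with $\mathcal{M}|_{X_t}\cong M_t$; since each $M_t$ is reflexive of rank $r$ over the two-dimensional normal ring $R$, one gets that $\mathcal{M}$ is reflexive of rank $r$ over $R[t]$ with $\mathrm{depth}_q\mathcal{M}=3$ at every closed point $q\in\mathfrak{X}$. Next I would lift the presentation $\phi_0\colon M_0\to\mathcal{O}_X^{\oplus(r+1)}$ to a map $\phi\colon\mathcal{M}\to\mathcal{O}_{\mathfrak{X}}^{\oplus(r+1)}$, which, the base being affine and the target free, exists after possibly replacing $\mathbb{A}^1$ by a smaller neighborhood of $0$. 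Setting $F_0=\mathcal{O}_{\mathfrak{X}}^{\oplus(r+1)}$, $F_1=\mathcal{M}$ and $\mathcal{Q}=\coker(\phi)$ then produces the three-term complex $0\to F_1\xrightarrow{\phi}F_0\to\mathcal{Q}\to0$ with the three stated properties; it is property (2), the constancy of the depth, that will later forbid embedded components.

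\emph{Pruning.} I would then run the chain of restrictions. (a) Flatness of the family together with reflexivity of $M_0$ forces the presentation to be syzygy-like at $0$, so the torsion of $\mathcal{Q}$ is supported away from $0$; removing its support makes $\mathcal{Q}$ torsion-free of rank $1$ with $\mathcal{Q}|_{X_0}\cong I_0$. (b) The reflexive hull $\mathcal{Q}^{**}$ equals $\mathcal{O}_{\mathfrak{X}_1}(D)$ for a Weil divisor $D$; homotopy invariance of class groups gives $\mathrm{Cl}(X)\cong\mathrm{Cl}(\mathfrak{X})$, which surjects onto $\mathrm{Cl}(\mathfrak{X}_1)$, so $D$ is pulled back from a divisor $D_0$ on $X$, and $D_0\sim0$ because $\mathcal{Q}|_{X_0}=I_0$ is trivial on the smooth locus $X^0=X\setminus\{p\}$; hence $\mathcal{Q}=\mathcal{I}_W$ for an honest closed subscheme $W\subset\mathfrak{X}_1$. (c) Deleting the finitely many two-dimensional components of $W$, which miss $X_0$, leaves $W$ a curve, and property (2) makes it Cohen--Macaulay, hence without embedded points. (d) Since $W\cap X_0=Z_0$ is zero-dimensional, $W$ has no vertical component through $(p,0)$; deleting the finitely many vertical components lying over points other than $0$ makes $W$ quasi-finite and flat over the resulting neighborhood $U$ of $0$. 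Finally I would invoke Zariski's main theorem, all schemes in sight being affine, to factor $W\to U$ as an open immersion into a finite $U$-scheme, and after one further shrink of $U$ the image becomes a finite flat $U$-scheme $Z$ coinciding with $W$ near $(p,0)$. Flatness over $U$ makes $\deg(Z/U)=l(Z_0)$ constant, and since $\mathcal{I}_W$ is itself $U$-flat, being the kernel of the surjection $\mathcal{O}_{\mathfrak{X}}\twoheadrightarrow\mathcal{O}_W$ of $U$-flat sheaves, restricting $0\to F_1\xrightarrow{\phi}F_0\to\mathcal{I}_W\to0$ to each fiber stays exact and exhibits $M_t=F_1|_{X_t}$ as the first syzygy of $I_t\coloneqq\mathcal{I}_W|_{X_t}=I_{Z_t}$.

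\emph{Main obstacle.} I expect the crux to be step (d): converting the coherent sheaf $\mathcal{Q}$, which is only generically the ideal sheaf of a finite family, into a genuine finite flat family of length $l(Z_0)$ over a neighborhood of $0$. This forces careful bookkeeping of the possibly non-equidimensional components of $W$ so that $0$ survives every shrinking, followed by the Zariski-main-theorem extraction of the finite part at $(p,0)$. Two supporting points will also be delicate: spreading out the presentation with the constant-depth condition intact, since that depth is precisely what annihilates embedded points on $W$ and keeps fiberwise restriction exact; and the class-group computation killing the Weil divisor $D$, which leans on homotopy invariance of divisor class groups of normal surfaces and on the fact that an ideal of finite colength trivializes the reflexive hull on the special fiber.
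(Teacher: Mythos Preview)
Your proposal is correct and follows essentially the same route as the paper: spread the presentation out over $\mathfrak{X}=X\times\mathbb{A}^1$ with the depth condition on $F_1$, then successively shrink around $0$ to kill torsion, trivialize the reflexive hull via the class-group argument, strip off two-dimensional and vertical components of $W$, and finish with Zariski's main theorem. If anything, your write-up is more explicit than the paper's at the final step---you spell out why fiberwise restriction of the syzygy sequence stays exact and how the finite flat piece is extracted---but the architecture is identical.
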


\begin{theorem}\label{finitehd}
Suppose $(X, p)$ is the germ of a rational double point surface singularity with $X = \mathrm{Spec}(R)$, the closed point $p \in X$ being the only singular point. Suppose $Z_0$ is a closed subscheme of $X$ supported at $p$ with ideal $I_0$. We assume that $I_0$ has a length 2 free resolution on $X$. Then $Z$ is smoothable. 
\begin{proof}
The proof is streamlined by a concrete construction of a flat family smoothing the subscheme $Z$. Write $X_0 = X \setminus \{p\}$ and $\mathfrak{X} = X \times \mathbb{A}^1$ for the isotrivial 1-parameter family of $X$ with two projections $\pi_X: \mathfrak{X} \to X$ and $\pi_{\mathbb{A}^1}: \mathfrak{X} \to \mathbb{A}^1$. Suppose $I_0$ has a minimal $R$-free resolution of the form
\begin{equation}\label{fhd}
0  \to R^{\oplus r - 1} \xrightarrow{\phi_0} R^{\oplus r} \to I_0 \to 0,
\end{equation}
where $\phi_0$ is a $r \times (r - 1)$ matrix with all entries $f_{ij}$ in the maximal ideal $\mathfrak{m}$ of $R$. Now we construct a three short exact sequence of $R[t]$-modules as in the proof of the preceding proposition which restricts to (\ref{fhd}):
\begin{equation}\label{fhdt}
0  \to R[t]^{\oplus r - 1} \xrightarrow{\phi} R[t]^{\oplus r} \to \mathcal{Q} \to 0.
\end{equation}
By the previous construction we can assume that $\mathcal{Q} = \mathcal{I}_W$ for some closed subscheme $W$ of $\mathfrak{X}$ finite and flat over an open neighborhood of $0 \in \mathbb{A}^1$.

Now we need to deform (\ref{fhdt}) so that the restriction to a general fiber is the free resolution of another ideal. We define a 1-parameter family of matrices: $\tilde{\phi} \coloneqq \phi + tI_{r - 1}'$, where $I_{r - 1}' = \begin{bmatrix}
I_{r - 1} \\
0
\end{bmatrix}$.

Note that for $t = 0$ we have $\tilde{\phi}_0 = \phi_0$, and for any $t \neq 0$, the top $(r - 1) \times (r - 1)$-minor of $\tilde{\phi}$ is invertible since its determinant is $t^{r - 1}$. Consequently, the cokernel of $\tilde{\phi}_t$ for $t \neq 0$ is the ideal of a closed subscheme of $X$ of length $d$ which does not have $p$ in its support. 
\end{proof}
\end{theorem}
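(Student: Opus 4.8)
The plan is to spread the Hilbert--Burch presentation of $I_0$ out over $\mathbb{A}^1$, perturb it by a term linear in the deformation parameter, verify that the resulting family of ideals is flat of constant colength $d:=l(Z_0)$ near $0$, and observe that its general member is supported away from $p$; since the Hilbert scheme of points of a smooth surface is irreducible, this puts $[Z_0]$ into the smoothable locus. As $I_0$ has a length $2$ free resolution, its minimal one is $0\to R^{\oplus r-1}\xrightarrow{\phi_0}R^{\oplus r}\to I_0\to 0$ with all entries of $\phi_0$ in $\mathfrak{m}$, and the Hilbert--Burch--Schaps theorem identifies $I_0$ with the ideal of maximal $(r-1)\times(r-1)$ minors of $\phi_0$. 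Over $\mathfrak{X}=X\times\mathbb{A}^1=\mathrm{Spec}\,R[t]$ I set $\tilde{\phi}=\phi_0+t\,I_{r-1}'$, where $I_{r-1}'$ stacks $I_{r-1}$ over a zero row, so $\tilde{\phi}$ restricts to $\phi_0$ over $0$; its source is free, hence reflexive of rank $r-1$ with depth $3$ at each closed point of the Cohen--Macaulay threefold $\mathfrak{X}$, so the presentation $0\to R[t]^{\oplus r-1}\xrightarrow{\tilde{\phi}}R[t]^{\oplus r}\to\mathcal{Q}\to 0$, with $\mathcal{Q}=\mathrm{coker}(\tilde{\phi})$, is of exactly the shape treated in the proof of Proposition~\ref{family}.

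I would then run that construction on $\tilde{\phi}$: the torsion of $\mathcal{Q}$ in the $\mathbb{A}^1$-direction is supported away from $0$ (the fibre over $0$ is the torsion-free ideal $I_0$); since the double dual of $I_0$ is $R$ and $\mathrm{Cl}(X\times\mathbb{A}^1)\cong\mathrm{Cl}(X)$, after shrinking $\mathbb{A}^1$ we may write $\mathcal{Q}=\mathcal{I}_W$ for a genuine ideal sheaf; the two-dimensional and the $\pi_{\mathbb{A}^1}$-vertical components of $W$ miss the fibre over $0$ and are removed by further shrinking; the depth hypothesis makes the remainder a Cohen--Macaulay curve; and Zariski's main theorem provides an open $U\ni 0$ over which $W\to U$ is finite and flat of degree $d$, with $W_0=Z_0$. (Alternatively, over a suitable $U$ the ideal $I_{r-1}(\tilde{\phi})$ is perfect of grade $2$, so its quotient is Cohen--Macaulay of relative dimension $0$ over the regular curve $U$ and hence flat by miracle flatness.) Setting $Z_t=W\times_U\{t\}$, the family $\{\mathcal{I}_{Z_t}\}_{t\in U}$ defines a morphism $U\to\mathrm{Hilb}^d(X)$ sending $0$ to $[Z_0]$.

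It then remains to locate the general fibre. For a scalar $t_0\neq 0$, the top $(r-1)\times(r-1)$ block of $\tilde{\phi}_{t_0}$ is $\phi_0^{\mathrm{top}}+t_0I_{r-1}$, whose determinant is congruent to $t_0^{r-1}\neq 0$ modulo $\mathfrak{m}$ and therefore a unit in $\mathcal{O}_{X,p}$; so $\tilde{\phi}_{t_0}$ is a split injection near $p$ and $\mathcal{I}_{Z_{t_0}}=\mathrm{coker}(\tilde{\phi}_{t_0})$ is locally free of rank $1$ at $p$, which for an ideal of finite colength forces $p\notin\mathrm{Supp}(Z_{t_0})$. Hence for $t_0\in U\setminus\{0\}$ the subscheme $Z_{t_0}$ lies in the smooth surface $X^0=X\setminus\{p\}$, and $\mathrm{Hilb}^d(X^0)$ is an open subscheme of $\mathrm{Hilb}^d(X)$ which, being the Hilbert scheme of points of a smooth surface, is irreducible with the locus of $d$ distinct reduced points dense in it; so $[Z_{t_0}]$ lies in the closed smoothable locus of $\mathrm{Hilb}^d(X)$, and so does $[Z_0]$, which lies in the closure of the image of the irreducible $U\setminus\{0\}$. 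This proves $Z_0$ smoothable.

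The step I expect to be the real obstacle is the flatness and finiteness of $W\to U$ in the second paragraph: a priori $\mathrm{coker}(\tilde{\phi})$ could acquire $t$-torsion, or two-dimensional or $\pi_{\mathbb{A}^1}$-vertical components, so that $W\to\mathbb{A}^1$ is neither finite nor flat nor of degree $d$; controlling all of this is precisely the chain of base shrinkings and the appeal to Zariski's main theorem packaged in Proposition~\ref{family}. The genuinely new input here --- that $p$ leaves the support once $t\neq 0$ --- is by comparison an elementary computation with $\tilde{\phi}$ modulo $\mathfrak{m}$.
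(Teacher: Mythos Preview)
Your proof is correct and follows essentially the same approach as the paper's: perturb the Hilbert--Burch matrix $\phi_0$ to $\tilde{\phi}=\phi_0+tI_{r-1}'$, invoke the machinery of Proposition~\ref{family} to guarantee the cokernel defines a flat family of length-$d$ subschemes near $t=0$, and observe that for $t\neq 0$ the top $(r-1)\times(r-1)$ block is invertible at $p$. Your write-up is in fact more precise on two points: you correctly note that the determinant of the top block is only \emph{congruent} to $t_0^{r-1}$ modulo $\mathfrak{m}$ (the paper asserts equality, which is inexact), and you supply the final step the paper leaves tacit---namely that $Z_{t_0}\subset X^0$ lies in the Hilbert scheme of a smooth surface, which is irreducible by Fogarty, so $[Z_{t_0}]$ and hence $[Z_0]$ fall in the smoothable component.
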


\begin{rmk}
See \cite[Theorem 2.2, 3.2]{Y04} for results on degenerations of modules in a more general setting, and \cite[Remark 2.5]{Y04} for a statement on specializations of an extension class. 
\end{rmk}

Next we provide with a class of examples of subschemes of finite homological dimension, for which the assumption is clear in the case where the surface is smooth (\cite[Lemma 2]{EL99}).  

\begin{prop}Suppose the syzygy module $\Omega(I_Z)$ of $I_Z$ is given by a minimal presentation
\[
0 \to \Omega(I_Z) \to R^{r + 3} \to I_Z \to 0,
\]
for some integer $r \geq 0$, i.e., $\Omega(I_Z)$ has free rank zero. Suppose $\dim_{\mathbb{C}}\mathrm{Ext}^1_R(I_Z, R) = r + 2$, then $I_Z$ has homological dimension 1 over $R$. 
\begin{proof}
Suppose $\eta_1, \dots, \eta_{r + 2}$ form a set of generators of $\mathrm{Ext}^1_R(I_Z, R)$. Then one can form the extension $\eta \coloneqq (\eta_1, \dots, \eta_{r + 2})$ for some $R$-module $M(I_Z)$:
\[
\eta: 0 \to R^{r + 2} \xrightarrow{\phi} M(I_Z) \to I_Z \to 0.
\]
Applying $\mathrm{Hom}_R(\bullet, R)$ to $\eta$, we have 
\[
0 \to R \to (M(I_Z))^* \to R^{r + 2} \xrightarrow{\gamma} \mathrm{Ext}^1_R(I_Z, R) \to \mathrm{Ext}^1_R(M(I_Z), R) \to 0.
\]
By construction, $\gamma: R^{r + 2} \to \mathrm{Ext}^1_R(I_Z, R)$ is surjective. Hence $\mathrm{Ext}^1_R(M(I_Z), R) = 0$. Then $M(I_Z)$ is maximal Cohen-Macaulay. In fact, Herzog and K\"{u}hl (\cite{HK}) show that $M(I_Z) \cong \Omega(I_Z)^* \oplus R$.

The $r + 2$ components of the map $\phi: R^{r + 2} \to M(I_Z)$ projecting to $\Omega(I_Z)^*$ defines a map $\tilde{\phi}: \Omega(I_Z) \to R ^{r + 2}$ of rank $r + 2$. One can take the push-out diagram 

\begin{center}
\includegraphics{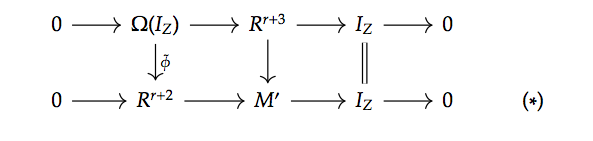}
\end{center} 
 
The bottom row of the diagram is denoted by $(*)$.
Now the two extensions $\eta$ and $(*)$ differ possibly by an automorphism of $R^{r + 2}$ since $\tilde{\phi}$ has full rank. Comparing $\eta$ with $(*)$, we get the following diagram

\begin{center}
\includegraphics{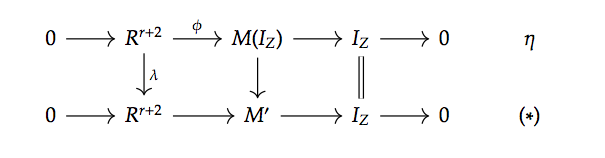}
\end{center} 

By the short-five lemma, the middle vertical map is an isomorphism: $M(I_Z) \cong M'$. Combining this with the previous diagram we have

\begin{center}
\includegraphics{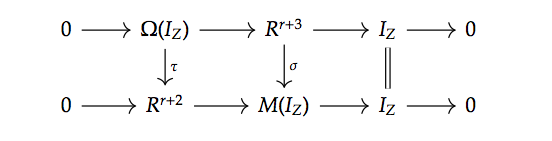}
\end{center} 

Since $\tau$ has full rank, by the Snake Lemma we see that $\ker(\tau) = 0$. Now suppose the cokernal of the left two vertical maps $\tau$ and $\rho$ is $K$. Then $K$ is an $R$-module of finite length. Applying $\mathrm{Hom}_R(\bullet, R)$ to the left two columns we get $\mathrm{Ext}^1(\Omega(I_Z), R) \cong \mathrm{Ext}^2(K, R)$. But since $\Omega(I_Z)$ is maximal Cohen-Macaulay, this ext-group is zero. By Matlis duality, $K = 0$. Hence $\Omega(I_Z)$ is a $R$-free. 
\end{proof}
\end{prop}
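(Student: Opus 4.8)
The plan is to turn the group $\mathrm{Ext}^1_R(I_Z,R)$ into a maximal Cohen--Macaulay module, to recognize that module via a splitting theorem of Herzog and K\"uhl, and then to run a duality argument that forces the first syzygy $\Omega(I_Z)$ to be free; since $\Omega(I_Z)$ is the first syzygy of $I_Z$ in the given minimal presentation, its freeness is exactly the assertion that $I_Z$ has homological dimension $1$ over $R$. Throughout I use that $R$ is a $2$-dimensional Gorenstein normal domain, so that $\mathrm{Hom}_R(I_Z,R)=R$ (as $I_Z$ has height $2$), that $\Omega(I_Z)$ is MCM and hence reflexive, and that over a $2$-dimensional Gorenstein local ring the functor $\mathrm{Ext}^2_R(-,R)$ is Matlis duality on modules of finite length.

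First I would fix a $\mathbb{C}$-basis $\eta_1,\dots,\eta_{r+2}$ of $\mathrm{Ext}^1_R(I_Z,R)$ (possible by the dimension hypothesis), regard the tuple as a class in $\mathrm{Ext}^1_R(I_Z,R^{r+2})$, and let $\eta\colon 0\to R^{r+2}\xrightarrow{\phi}M\to I_Z\to 0$ be the associated universal extension. Applying $\mathrm{Hom}_R(-,R)$ and using $\mathrm{Hom}_R(I_Z,R)=R$ together with $\mathrm{Ext}^{\geq 1}_R(R^{r+2},R)=0$ yields an exact sequence $0\to R\to M^*\to R^{r+2}\xrightarrow{\gamma}\mathrm{Ext}^1_R(I_Z,R)\to \mathrm{Ext}^1_R(M,R)\to 0$ in which $\gamma$ carries the standard basis to $\eta_1,\dots,\eta_{r+2}$ and is therefore surjective, so $\mathrm{Ext}^1_R(M,R)=0$. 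Now $M$ is an extension of $I_Z$ (which has depth $1$ and dimension $2$, being an ideal of finite colength in the $2$-dimensional domain $R$) by a free module, so $M$ has dimension $2$ and depth $\geq 1$; local duality over the Gorenstein ring $R$ then shows that the vanishing of $\mathrm{Ext}^1_R(M,R)$ forces $\mathrm{depth}\,M=2$, i.e. $M$ is MCM. By the splitting theorem of Herzog and K\"uhl \cite{HK} this universal extension splits, $M\cong \Omega(I_Z)^*\oplus R$.

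Next I would let $\phi'\colon R^{r+2}\to \Omega(I_Z)^*$ be the corresponding component of $\phi$ and set $\tilde\phi:=(\phi')^*\colon \Omega(I_Z)=\Omega(I_Z)^{**}\to R^{r+2}$; the surjectivity of $\gamma$ (equivalently, the explicit form of the Herzog--K\"uhl splitting) forces $\tilde\phi$ to have full rank $r+2$, so $\tilde\phi$ is injective with cokernel $K$ of finite length. The syzygy presentation $\epsilon\colon 0\to \Omega(I_Z)\to R^{r+3}\to I_Z\to 0$ is versal, since $\mathrm{Ext}^1_R(R^{r+3},-)=0$ makes $\mathrm{Hom}_R(\Omega(I_Z),N)\to \mathrm{Ext}^1_R(I_Z,N)$, $g\mapsto g_*\epsilon$, surjective for all $N$; pushing $\epsilon$ out along $\tilde\phi$ therefore produces an extension $(*)$ of $I_Z$ by $R^{r+2}$ which, because $\tilde\phi$ has full rank, agrees with $\eta$ up to an automorphism of $R^{r+2}$. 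By the short five lemma the middle term of $(*)$ is isomorphic to $M\cong \Omega(I_Z)^*\oplus R$, and splicing the pushout square with this decomposition gives a map of short exact sequences from $\epsilon$ to $0\to R^{r+2}\to \Omega(I_Z)^*\oplus R\to I_Z\to 0$ that is $\tilde\phi$ on the left and the identity on $I_Z$. The snake lemma reconfirms $\ker\tilde\phi=0$ and identifies $K=\mathrm{coker}\,\tilde\phi$ with the finite-length cokernel of the middle map; dualizing $0\to \Omega(I_Z)\xrightarrow{\tilde\phi}R^{r+2}\to K\to 0$ and using $\mathrm{Ext}^{\geq 1}_R(R^{r+2},R)=0$ gives $\mathrm{Ext}^1_R(\Omega(I_Z),R)\cong \mathrm{Ext}^2_R(K,R)$. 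Since $\Omega(I_Z)$ is MCM over the Gorenstein ring $R$ the left side vanishes, hence $\mathrm{Ext}^2_R(K,R)=0$; as $\mathrm{Ext}^2_R(-,R)$ is a faithful duality functor on finite-length $R$-modules, $K=0$. Thus $\tilde\phi$ is an isomorphism, $\Omega(I_Z)\cong R^{r+2}$ is free, and $I_Z$ has homological dimension $1$.

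The step I expect to be the main obstacle is the comparison carried out in the third paragraph: showing that the map $\tilde\phi$ extracted from the universal extension has full rank, and that the pushout of the syzygy sequence along it reproduces $\eta$ up to an automorphism of $R^{r+2}$. This is precisely where the numerical hypotheses are consumed — minimality of the presentation ($\mu(I_Z)=r+3$) together with $\dim_{\mathbb{C}}\mathrm{Ext}^1_R(I_Z,R)=r+2$ is what makes the two free modules $R^{r+2}$ on the two sides match up, so that the comparison of extension classes is an isomorphism rather than merely a morphism. To make the full-rank claim transparent I would want the precise form of the Herzog--K\"uhl splitting of $M$ (in particular, which copy of $R$ is split off), since that pins down the component $\phi'$ and hence $\tilde\phi$; the remaining ingredients — the MCM criterion in the second paragraph, the five-lemma and snake-lemma bookkeeping, and the closing Matlis-duality step — are then routine.
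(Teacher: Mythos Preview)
Your proposal is correct and follows essentially the same route as the paper's proof: form the universal extension, show its middle term $M$ is MCM and identify it as $\Omega(I_Z)^*\oplus R$ via Herzog--K\"uhl, extract the map $\tilde\phi\colon \Omega(I_Z)\to R^{r+2}$, compare the pushout of the syzygy sequence with $\eta$ via the five and snake lemmas, and finish with the Matlis-duality argument forcing $K=0$. You have in fact supplied more justification than the paper does at several points (the local-duality step for the MCM property of $M$, the versality of the syzygy sequence, and the faithfulness of $\mathrm{Ext}^2_R(-,R)$ on finite-length modules), and the step you flag as the main obstacle---the full-rank claim for $\tilde\phi$ and the identification of the pushout with $\eta$ up to an automorphism of $R^{r+2}$---is exactly the step the paper handles most briefly.
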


The rest of the section is devoted to the study of deformations of reflexive modules which appear as syzygies of zero-dimensional subschemes on the surface. Roughly speaking, if there is a short exact sequence of reflexive modules 
\[
0 \to M \to M'' \to M' \to 0
\]
then the direct sum $M \oplus M'$ can be considered as a specialization of $M''$ by trivializing the extension. On the other hand, for a fixed pair of modules $M$ and $M'$, the $\mathbb{C}$-vector space $\mathrm{Ext}_R^1(M', M)$ is stratified into finitely many strata according to the isomorphism classes of the middle term, and there is a ``generic extension'', i.e., a dense subset of $\mathrm{Ext}_R^1(M', M)$ which all have the same middle term, denoted by $E(M', M)$. Consequently, all of the extensions can be thought as specializations of the generic extension.

To proceed, we first compute the dimensions of the ext-groups of pairs of reflexive modules. Such computations appear in \cite{G80} and in a broader context in \cite{IM}. Suppose $M_1, \dots, M_n$ are the distinct isomorphism classes of indecomposable reflexive $R$-modules. For each $M_i$ there are at least two natural short exact sequences ending with $M_i$: one is the presentation after choosing a minimal set of generators of $M_i$:
\begin{equation}\label{presentation}
0 \to M_i^* \to R^{\oplus 2 \mathrm{rk} (M_i)} \to M_i \to 0
\end{equation}
where $M_i^*$ is the $R$-dual of $M_i$. The other is the Auslander-Reiten sequence, which has both endings $M_i$:
\begin{equation}\label{arsequence}
0 \to M_i \to E(M_i) \to M_i \to 0,
\end{equation}
where $E(M_i)$ is the direct sum of all the indecomposable modules corresponding to the vertices adjacent to the one for $M_i$ in the extended Dynkin diagram. For the reader's convenience, we briefly reproduce the construction of both cases below. For a rigorous exposition on the sequence (\ref{arsequence}) in an appropriate generality, see \cite{W88}. For the coincidence between the McKay quiver associated the the finite group $G \leq \mathrm{SL}_2(\mathbb{C})$ and the Auslander-Reiten quiver on the set of indecomposable reflexive modules we refer the reader to Auslander's original paper (\cite{A86}). 

An extended Dynkin diagram is obtained from the usual Dynkin diagram by adding one vertex corresponding to the trivial module $R$ at the appropriate place. In the diagrams below, we label the vertices in the $A_n$ case $1, \dots, n$ from the left to the right, and in the $D_n$ case $1, \dots, n$ from the left to the right (the last two vertices $n - 1$ and $n$ are not distinguished). For $A_n$, one puts the extra vertex on top of the chain of $n$ existing vertices and close the loop by adding edges from vertices 1 and $n$ to this $(n + 1)$-st vertex. For $D_n$, the $(n + 1)$-st vertex is added to vertex 2 on the left tail so that the picture would become symmetric. For $E_6$ the extra vertex is placed and connected by an edge on top of vertex 4; for $E_7$ and $E_8$ the extra vertex is connected to vertex 1 (labelled by a circle).  
\begin{center}
\includegraphics{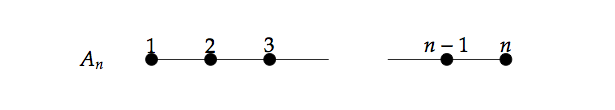}

\includegraphics{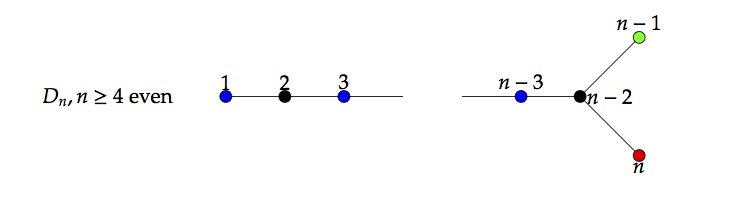}

\includegraphics{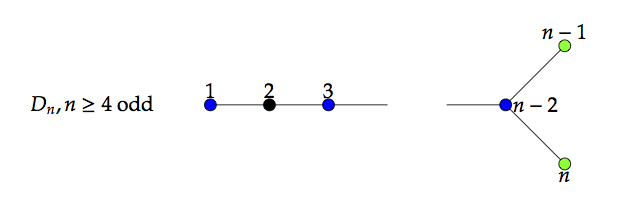}

\includegraphics{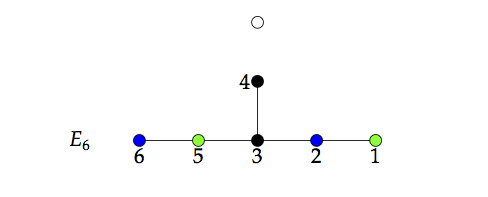}

\includegraphics{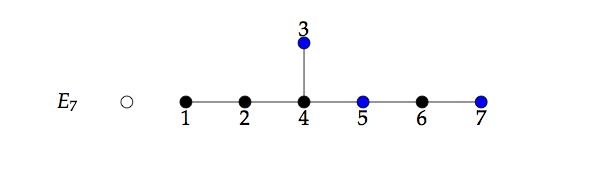}

\includegraphics{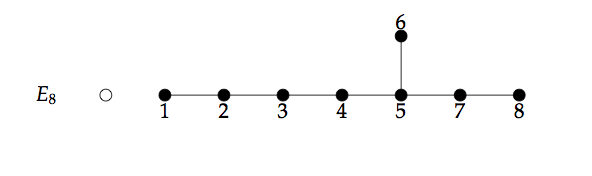}

\small{Dynkin diagrams of $ADE$ singularities \\(in each cases of $D_n$ and $E_n$ nodes of the same color correspond to modules \\with the same determinant in the local class group of the singularity)}
\end{center}

To explain sequence (\ref{presentation}), we have the following
\begin{lemma}The $R$-dual of each finite rank reflexive module is listed as below:
\begin{itemize}
\item[$A_n$.] $M_i \cong M^*_{n + 1 - i}$ for $i = 1, \dots n$. 
\item[$D_n$.] $M_i \cong M_i^*$ for $i = 1, \dots, n - 2$. If $n$ is even, then $M_{n - 1}$ and $M_n$ are also self-dual; if $n$ is odd, then they are dual to each other. 
\item[$E_n$.] In $E_6$, $M_1 \cong M_6^*, M_2 \cong M_5^*$, and $M_4$ and $M_3$ are self-dual. In $E_7$ and $E_8$ every indecomposable module is self-dual.
\end{itemize}
\end{lemma}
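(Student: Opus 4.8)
The plan is to transport the computation to the representation side via the geometric McKay correspondence recalled above. Write $G\le\mathrm{SL}(2,\mathbb{C})$ for the finite group with $R=\mathbb{C}[u,v]^{G}$, set $S=\mathbb{C}[u,v]$, and recall the decomposition of $S$ as an $R[G]$-module, $S\cong\bigoplus_{i}\rho_{i}\otimes_{\mathbb{C}}M_{i}$, where $\rho_{i}$ runs over the irreducible representations of $G$ and $M_{i}$ is the corresponding indecomposable reflexive $R$-module (with $M_{0}=R$ attached to the trivial $\rho_{0}$). Since $R$ and $S$ are Gorenstein and $G\subseteq\mathrm{SL}(2,\mathbb{C})$ acts trivially on $\omega_{S}$, there is a $G$-equivariant isomorphism $\mathrm{Hom}_{R}(S,R)\cong\omega_{S}\cong S$; dualizing the decomposition over $R$ then gives $S\cong\bigoplus_{i}\rho_{i}^{*}\otimes_{\mathbb{C}}M_{i}^{*}$, and comparing the $\rho_{j}$-isotypic summands in the two descriptions of $S$ forces $M_{j}^{*}\cong M_{j^{*}}$, where $j\mapsto j^{*}$ is the involution of the index set characterized by $\rho_{j^{*}}\cong\rho_{j}^{*}$. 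Consequently $M_{i}$ is self-dual exactly when the irreducible character $\chi_{i}$ is real-valued, and $i\mapsto i^{*}$ is the permutation of the indecomposables induced by complex conjugation of characters.

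Next I would observe that $i\mapsto i^{*}$ is a graph automorphism of the extended Dynkin diagram that fixes the affine vertex. The McKay edges are governed by tensoring with the natural representation $\rho_{\mathrm{nat}}\colon G\hookrightarrow\mathrm{SL}(2,\mathbb{C})$, which is self-dual because it has trivial determinant; hence the multiplicity of $\rho_{j}$ in $\rho_{i}\otimes\rho_{\mathrm{nat}}$ is unchanged under $i\mapsto i^{*}$, $j\mapsto j^{*}$, so the permutation preserves adjacency, and it fixes $\rho_{0}$. Reading off the diagrams above, a graph automorphism of the extended Dynkin diagram fixing the affine vertex is either the identity or, respectively, the flip of the chain $M_{1},\dots,M_{n}$ in type $A_{n}$ ($n\ge2$), the transposition $M_{n-1}\leftrightarrow M_{n}$ in type $D_{n}$ ($n\ge5$), and the transposition $M_{1}\leftrightarrow M_{6}$, $M_{2}\leftrightarrow M_{5}$ of the two arms not containing $R$ in type $E_{6}$; for types $E_{7}$ and $E_{8}$ only the identity fixes the affine vertex. (For $D_{4}$ there are further such automorphisms, but the analysis below shows that the duality involution is trivial in that case too.) It therefore remains to decide, type by type, whether $i\mapsto i^{*}$ is the identity or the indicated involution, and for this it is enough to exhibit one non-self-dual irreducible, or to see that none exists.

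I would carry out the case analysis through the abelianization $G^{\mathrm{ab}}$, whose characters are precisely the one-dimensional irreducibles of $G$. In type $A_{n}$ the group $G=\mathbb{Z}/(n+1)$ is generated by $\mathrm{diag}(\zeta_{n+1},\zeta_{n+1}^{-1})$; the character $\rho_{j}\colon g\mapsto\zeta_{n+1}^{j}$ has dual $\rho_{n+1-j}$, so $i\mapsto i^{*}$ is the flip and $M_{i}\cong M_{n+1-i}^{*}$. In type $D_{n}$ ($n\ge4$) the group is the binary dihedral group $\mathrm{BD}_{n-2}=\langle a,b\mid a^{2(n-2)}=1,\ b^{2}=a^{n-2},\ bab^{-1}=a^{-1}\rangle$; abelianizing gives $G^{\mathrm{ab}}\cong\mathbb{Z}/4$ when $n$ is odd and $G^{\mathrm{ab}}\cong(\mathbb{Z}/2)^{2}$ when $n$ is even, whereas every two-dimensional irreducible is self-dual, its character being a sum $\zeta^{k}+\zeta^{-k}$ on the cyclic subgroup $\langle a\rangle$ and zero elsewhere; a short McKay computation then places the two self-dual one-dimensional characters at the fork containing $R$ (they are $R$ and $M_{1}$) and $M_{n-1},M_{n}$ at the opposite fork. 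Hence for $n$ even all $M_{i}$ are self-dual, and for $n$ odd precisely $M_{n-1},M_{n}$ are not, being interchanged by duality. In types $E_{6},E_{7},E_{8}$ the group is the binary tetrahedral, octahedral, or icosahedral group, with $G^{\mathrm{ab}}\cong\mathbb{Z}/3$, $\mathbb{Z}/2$, $1$ respectively: in the tetrahedral case the two nontrivial one-dimensional characters are complex-conjugate, and tensoring with $\rho_{\mathrm{nat}}$ spreads this conjugation to the two-arm swap, giving $M_{1}\cong M_{6}^{*}$, $M_{2}\cong M_{5}^{*}$ with $M_{3},M_{4}$ self-dual; in the octahedral and icosahedral cases every irreducible character is real-valued, so $i\mapsto i^{*}$ is the identity.

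I expect the main obstacle to be types $E_{7}$ and $E_{8}$: there the claim reduces to the assertion that all irreducible characters of the binary octahedral and binary icosahedral groups are real-valued, which has to be imported from the representation theory of those groups (e.g.\ via their Frobenius--Schur indicators) rather than deduced from the affine Dynkin diagram. A secondary, essentially clerical difficulty is to match the abstract conclusion ``duality is the identity or the unique nontrivial diagram automorphism fixing the affine vertex'' to the explicit module indices $M_{n-1},M_{n}$ in type $D_{n}$ and $M_{1},\dots,M_{6}$ in type $E_{6}$ as labeled in the diagrams above.
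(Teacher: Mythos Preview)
The paper states this lemma without proof; it is presented as a standard fact in the theory of reflexive modules over rational double points, presumably implicit in the cited references on the McKay correspondence. Your proposal therefore supplies an argument where the paper offers none.

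Your route is the natural one and is correct. The key step---that $R$-duality on the indecomposable reflexive modules corresponds, under McKay, to complex conjugation of irreducible characters of the finite subgroup $G\le\mathrm{SL}(2,\mathbb{C})$---follows exactly as you outline from the $G$-equivariant isomorphism $\mathrm{Hom}_R(S,R)\cong S$. The observation that the resulting permutation is a graph automorphism of the extended Dynkin diagram fixing the affine vertex then reduces everything to the short list of possible involutions, and the abelianization argument efficiently pins down which one occurs in each type. The two ``obstacles'' you flag are genuine but mild: that the binary octahedral and icosahedral groups have only real-valued characters is a classical fact (their Frobenius--Schur indicators are all $+1$ or $-1$), and the matching of the fork vertices in type $D_n$ to the four one-dimensional characters is a routine McKay computation once one writes down $\rho_{\mathrm{nat}}$ explicitly. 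Nothing in your plan would fail.
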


Let $\widetilde{\Omega_{X}}$ be the reflexive hull of the sheaf of holomorphic 1-forms on the open surface $X_0$ extended to $X$. Then $\widetilde{\Omega_{X}}$ is a reflexive $\mathcal{O}_X$-module. There is the \textit{fundamental sequence} on $X$:
\begin{equation}\label{fundamental}
0 \to \omega_{X} \to \widetilde{\Omega_{X}} \to \mathcal{O}_{X} \to \mathbb{C} \to 0.
\end{equation}
Suppose $M$ is a non-trivial indecomposable reflexive module on $X$, then the Auslander-Reiten sequence ending with $M$ can be obtained by tensoring the sequence (\ref{fundamental}) with $M$ and taking reflexive hulls, resulting with a sequence of the form
\[
0 \to \tau(M) \to (\widetilde{\Omega_{X}}\otimes M)^{**} \to M \to 0,
\]
where $\tau(M) = (\omega_{X} \otimes M)^{**}$ is called the \textit{Auslander-Reiten transpose} of $M$. In the current case, since the surface is Gorenstein $\tau(M) \cong M$, and the middle term was denoted by $E(M)$ in (\ref{arsequence}). 

However, there are in fact many more extensions of reflexive modules. A combinatorial way to find the dimensions of $\mathrm{Ext}^1_R(M_i, M_j)$ for any pair of modules $M_i$ and $M_j$ is developped by Iyama and Wemyss (\cite[Theorem 4.4, 4.5]{IM}). In this article we compute some examples following (\cite{IM}). 

\begin{eg}($A_n$)
In the $A_n$ case, for any integers $1 \leq a \leq b \leq n$ we have $\dim_{\mathbb{C}}\mathrm{Ext}_R^1(M_b, M_a) = \dim_{\mathbb{C}}\mathrm{Ext}_R^1(M_a, M_b) = \max\{a, n + 1 - b\}$.
\end{eg}

\begin{eg}($D_6$)
Say we want to compute $\dim_{\mathbb{C}} \mathrm{Ext}^1_R(M_2, M_1)$. We will produce a chain of tuples of the indices on the Dynkin diagram. We start with the tuple $(2)$ indicating the extensions should end at $M_2$. The next tuple is the collection of all indices adjacent to $2$, namely $(1, 3)$, and we write $(2) \mapsto (1, 3)$. The next tuple is the collection of all adjacent vertices of $(1, 3)$ counting multiplicities subtracting the previous one, hence it is $(2, 2, 4) - (2) = (2, 4)$ and we write $(2) \mapsto (1, 3) \mapsto (2, 4)$. This chain keeps going:
\[
(2) \mapsto (1, 3) \mapsto (2, 4) \mapsto (3, 5, 6) \mapsto (4, 4)  \mapsto (3, 5, 6) \mapsto (2, 4) \mapsto (1, 3) \mapsto (2) \mapsto 0.
\]
Eventually the sequence will stop at $0$ and we count how many 1's showing up in this entire sequence. There are two 1's, so $\dim \mathrm{Ext}^1_R(M_2, M_1) = 2$. This method works for any $ADE$ singularities.
\end{eg}

\begin{eg}($D_4$)
The same calculation as for $D_6$ says that $\dim_{\mathbb{C}} \mathrm{Ext}^1_R(M_i, M_i) = 2$ for any $i = 1, 3, 4$. 
\end{eg}

Knowing the dimensions of the extensions for each pair of reflexive modules gives a preliminary estimation for the deformations of the modules, and we need to find the stratification within each Ext-group and to determine the most general extensions. For a fixed singularity and a fix pair of reflexive modules $M_i$ and $M_j$ on it, we declare a partial order on $\mathrm{Ext}^1_R(M_i, M_j)$: two extensions satisfy $\eta > \tau$ if $\tau$ is a specialization of $\eta$.

\begin{lemma}
In each complex vector space $\mathrm{Ext}^1_R(M_i, M_j)$ there is a unique maximal element, corresponding to the ``generic extension''.
\begin{proof}
This should be a general homological statement. Suppose to the contrary that there are two non-isomorphic extensions $\eta, \tau \in \mathrm{Ext}^1_R(M_i, M_j)$ such that each has an open neighborhood $U_1$ and $U_2$ in $\mathrm{Ext}^1_R(M_i, M_j)$ in its usual Euclidean topology. Then we connect $\eta$ and $\tau$ by a line segment and construct a homotopy from one to the other. This line segment will intersect with $U_1$ and $U_2$ both on an open subset. This is impossible since a one-dimensional subspace of $\mathrm{Ext}^1_R(M_i, M_j)$ can have only one generic class. 
\end{proof}
\end{lemma}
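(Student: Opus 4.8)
The plan is to realize all the middle terms $E_\eta$ in a single flat family over $V:=\mathrm{Ext}^1_R(M_i,M_j)$, stratify $V$ by the isomorphism type of $E_\eta$, and then quote the irreducibility of $V$. Write $X=\mathrm{Spec}(R)$; as recalled above $V$ is a finite-dimensional $\mathbb{C}$-vector space, so as a variety it is an affine space, in particular irreducible (if $\dim V=0$ there is nothing to prove, so assume $\dim V\ge 1$). The key object is the universal extension: on $X\times V$ there is a coherent sheaf $\mathcal{E}$ fitting in a short exact sequence $0\to\mathrm{pr}_X^{*}M_j\to\mathcal{E}\to\mathrm{pr}_X^{*}M_i\to 0$ whose restriction to the slice $X\times\{\eta\}$ is the middle term $E_\eta$ of the class $\eta$. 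Since $\mathrm{pr}_X^{*}M_i$ and $\mathrm{pr}_X^{*}M_j$ are flat over $V$ and flat modules are closed under extension, $\mathcal{E}$ is flat over $V$; this is what lets a specialization of one middle term to another be read off from the geometry of $V$.

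First I would set up the stratification. Each $E_\eta$ is an extension of an MCM module by an MCM module over the two-dimensional Cohen--Macaulay ring $R$, hence is itself MCM, hence reflexive, of the fixed rank $\mathrm{rk}(M_i)+\mathrm{rk}(M_j)$; by the representation-finiteness of the $ADE$ singularity only finitely many isomorphism types $N_1,\dots,N_k$ occur, giving a finite partition $V=\bigsqcup_{\ell=1}^{k}V_\ell$ with $V_\ell=\{\eta:E_\eta\cong N_\ell\}$. Each $V_\ell$ is constructible, the isomorphism type of the fibre of a flat family with finitely many possible fibre types being a constructible invariant of the base (this is part of the general theory of degenerations of Cohen--Macaulay modules over an isolated singularity, cf.\ \cite{Y04}). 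Since $V$ is irreducible and covered by the finitely many constructible sets $V_\ell$, exactly one of them — say $V_1$ — is dense: two disjoint constructible subsets of an irreducible variety cannot both be dense, as each would contain a dense open and two dense opens meet; and by the same token $V_1$ contains a nonempty open, hence dense, subset $U\subseteq V$. Its middle term $N_1$ is the generic extension $E(M_i,M_j)$.

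It remains to identify the generic class as the unique maximal element of the poset. The poset $(\{N_1,\dots,N_k\},\le)$ is finite, so it has a maximal element, say $N_\ell$; I claim $N_\ell\cong N_1$. Choose $v\in V_\ell$ and $u_0\in U$ and let $L\subseteq V$ be the line through $v$ and $u_0$; restrict the flat family $\mathcal{E}$ to $X\times L$. Over the dense open $L\cap U$ the fibre is $N_1$, so the generic fibre of this one-parameter family over $L\cong\mathbb{A}^1$ is $N_1$, while its fibre at $v$ is $N_\ell$; localizing $L$ at $v$ therefore exhibits $N_\ell$ as a specialization of $N_1$, i.e.\ $N_\ell\le N_1$ in the poset. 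Maximality of $N_\ell$ then forces $N_\ell\cong N_1$. Hence every maximal element of the poset is isomorphic to $N_1$, so the maximal element is unique and coincides with the generic extension, as claimed.

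I expect the main obstacle to be the constructibility of the strata $V_\ell$ — equivalently, upgrading ``$V_1$ is dense'' to ``$V_1$ contains a dense open'' — together with the verification that the abstract partial order on $\mathrm{Ext}^1_R(M_i,M_j)$ is faithfully reflected by the geometry of $V$, i.e.\ that a closure relation inside $V$ genuinely realizes one middle term as a module-theoretic specialization of another. This is exactly where the flatness of $\mathcal{E}$ over $V$ and the theory of degenerations of Cohen--Macaulay modules over an isolated singularity do the real work; granting these, the uniqueness statement is a formal consequence of the irreducibility of the affine space $V$.
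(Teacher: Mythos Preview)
Your proof is correct and follows essentially the same approach as the paper's: both hinge on the irreducibility of the affine space $V=\mathrm{Ext}^1_R(M_i,M_j)$ and settle the uniqueness by restricting the family of middle terms to a line, where only one generic isomorphism type can occur. Your version is considerably more rigorous---you build the universal extension, invoke representation-finiteness of the $ADE$ singularity to get a finite constructible stratification, and then run the line argument---whereas the paper's proof is a brief sketch that asserts the one-dimensional case directly and even suggests the statement is ``a general homological statement'' rather than one depending on the finiteness you actually use.
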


In the $A_n$ case, regarding the $M_i$'s as fractional ideals over $\mathcal{O}_{X}$, namely, $M_i = \mathcal{O}_{X}\langle 1, u^i/v^{n + 1 - i}\rangle$, we can explicitly express the mutual map between $M_i$ and $M_{i + 1}$: one direction is given by inclusion $M_{i + 1} \hookrightarrow M_i$, and the other direction $M_{i} \to M_{i + 1}$ sends 1 to 1 and multiplies $u^i/v^{n + 1 - i}$ by $uv$. By composing morphisms between the modules, we have:

\begin{lemma}[$A_n$]\label{maps}
Let $X_n$ be the normal surface with an isolated $A_n$ singularity. For any integers $1 \leq a \leq b \leq n$ there is the following exact sequence:
\[
0 \to M_a \to M_{a - i} \oplus M_{b + i} \to M_b \to 0,
\]
where $i \in \{0, \dots, \max\{a, n + 1 - b\} \}$. Any extensions of $M_b$ by $M_a$ has such form. Moreover, the following sequence is generic:
\[
0 \to M_a \to \mathcal{O}_{X_n} \oplus M_{\overline{a + b}} \to M_b \to 0,
\]
where $\overline{a + b} \equiv a + b (\mod n + 1)$, $\overline{a + b} \leq n + 1$. 
\end{lemma}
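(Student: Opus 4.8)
The plan is to stay inside the fraction field of $R\cong\mathbb{C}[u,v]^{\mathbb{Z}/(n+1)}$, where each $M_i$ is realized as the explicit fractional ideal $R+R\cdot(u^i/v^{n+1-i})$ — with the cyclic convention that $M_i$ depends only on $i\bmod(n+1)$ and $M_0=M_{n+1}=\mathcal{O}_{X_n}$ — and to build all the sequences by composing the two elementary maps recalled just above, namely $M_{j+1}\hookrightarrow M_j$ (inclusion) and $M_j\to M_{j+1}$ (multiply the generator by $uv$), running around the McKay $(n+1)$-cycle as needed. For the first assertion, given $1\le a\le b\le n$ and $0\le i\le\max\{a,\,n+1-b\}$, I would take the map $M_a\to M_{a-i}\oplus M_{b+i}$ whose first coordinate is the composite of $i$ elementary maps $M_a\to M_{a-1}\to\cdots\to M_{a-i}$ and whose second coordinate is the composite of $b+i-a$ elementary maps $M_a\to M_{a+1}\to\cdots\to M_{b+i}$, both read cyclically; one checks $b+i-a\le n$ always. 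This map is injective, and a short monomial computation identifies its cokernel as a torsion-free rank-one module; since it is the cokernel of a map of maximal Cohen--Macaulay modules that is surjective in codimension one with locally free cokernel there, the cokernel is $S_2$, hence reflexive, and its class in $\mathrm{Cl}(R)\cong\mathbb{Z}/(n+1)$ is $(a-i)+(b+i)-a\equiv b$, so it is $M_b$. (The case $i=0$ is the split sequence.)

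Next I would show these are \emph{all} the extensions. Given an arbitrary $0\to M_a\to E\to M_b\to0$, the middle term $E$ is maximal Cohen--Macaulay, hence reflexive, hence — since every indecomposable reflexive module in type $A_n$ has rank one and $\mathrm{rk}\,E=2$ — a direct sum $E\cong M_c\oplus M_d$ with $c+d\equiv a+b\pmod{n+1}$. This divisor-class constraint alone is \emph{not} sufficient: for instance $M_{a+1}^{\oplus2}$ satisfies it for an extension of $M_{a+2}$ by $M_a$ but does not occur. So the core of the argument is to decide, for each pair $(c,d)$ with $c+d\equiv a+b$, whether there is a surjection $M_c\oplus M_d\twoheadrightarrow M_b$ whose kernel is isomorphic to $M_a$. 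Writing $\mathrm{Hom}_R(M_c\oplus M_d,M_b)$ via the fractional-ideal quotients $(M_b:M_c)$ and $(M_b:M_d)$ and then imposing that the kernel be the reflexive rank-one module $M_a$ turns this into an elementary (if tedious) problem about exponents of Laurent monomials in $u,v$, whose outcome is precisely the list $\{a-i,\,b+i\}$, $0\le i\le\max\{a,\,n+1-b\}$.

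For the genericity of $0\to M_a\to\mathcal{O}_{X_n}\oplus M_{\overline{a+b}}\to M_b\to0$: this is exactly the member of the family with $i=\max\{a,\,n+1-b\}$, because then $M_{a-i}=M_0=\mathcal{O}_{X_n}$ if $\max=a$ and $M_{b+i}=M_{n+1}=\mathcal{O}_{X_n}$ if $\max=n+1-b$, while in either case the remaining summand is $M_{\overline{a+b}}$. By the previous lemma, $\mathrm{Ext}^1_R(M_b,M_a)$ — of dimension $\max\{a,\,n+1-b\}$ by the Example above — has a unique generic class, and since by the exhaustion step the only possible middle terms are those appearing in the family, the generic stratum is the unique top-dimensional one; a codimension count (the split locus is the single point $0$, of codimension $\max\{a,\,n+1-b\}$, and the intermediate strata have strictly positive codimension) then identifies it as the stratum with middle term $\mathcal{O}_{X_n}\oplus M_{\overline{a+b}}$.

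The main obstacle is the exhaustion step: soft arguments only reduce an arbitrary middle term to some $M_c\oplus M_d$ of the right rank and divisor class, which over-counts, so one genuinely has to compute the Hom-modules between the relevant fractional ideals and determine which direct sums admit a surjection onto $M_b$ with kernel $M_a$; this is the hands-on counterpart in type $A_n$ of the combinatorial recipe of Iyama--Wemyss. By comparison the remaining points — that the cokernel in the first step is reflexive rather than merely torsion-free (handled by localizing away from the singular point) and the codimension count for genericity (handled by the known value of $\dim_{\mathbb{C}}\mathrm{Ext}^1_R(M_b,M_a)$) — are routine.
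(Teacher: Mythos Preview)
Your proposal is correct and follows essentially the same approach as the paper: the paper does not give a separate proof of this lemma, but the paragraph immediately preceding it sets up exactly the fractional-ideal realization $M_i=R\langle 1,u^i/v^{n+1-i}\rangle$ and the two elementary maps $M_{i+1}\hookrightarrow M_i$ and $M_i\to M_{i+1}$ (multiplication by $uv$), and then simply states ``By composing morphisms between the modules, we have'' the lemma. Your construction of the sequences via composites of these elementary maps around the $(n+1)$-cycle, your exhaustion argument by decomposing the middle term as a sum of two rank-one reflexives and then pinning down the possible pairs $(c,d)$ via the fractional-ideal Hom computation, and your identification of the generic class via the known value of $\dim_{\mathbb{C}}\mathrm{Ext}^1_R(M_b,M_a)$ together with the uniqueness-of-generic-extension lemma, are precisely the details the paper leaves implicit.
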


\begin{lemma}[$D_4$]\label{d4}
We have 
\[
\dim_{\mathbb{C}} \mathrm{Ext}^1_R(M_i, M_i) = 2
\]
for $i = 1, 3, 4$. Equivalently, the Auslander-Reiten sequence and the minimal presentation sequence span the space $\mathrm{Ext}^1_R(M_i, M_i)$ for $i = 1, 3, 4$. Moreover, there is an open dense subset of $\mathrm{Ext}^1_R(M_i, M_i)$ giving rise to extensions whose middle term is $R^{\oplus 2}$.
\end{lemma}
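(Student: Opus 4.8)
The plan is to make the two distinguished extension classes explicit in the $D_4$ situation, to check that they are linearly independent inside the two-dimensional space $\mathrm{Ext}^1_R(M_i,M_i)$, and then to identify the generic middle term by a semicontinuity argument on the universal extension over $\mathrm{Ext}^1_R(M_i,M_i)$. In the extended Dynkin diagram of $D_4$ the vertices $1,3,4$ are the leaves and vertex $2$ is the central node, so $M_1,M_3,M_4$ have rank $1$, $M_2$ has rank $2$, and by the duality lemma above all of $M_1,\dots,M_4$ are self-dual since $n=4$ is even. Hence for $i\in\{1,3,4\}$ the minimal presentation sequence~(\ref{presentation}) reads
\[
0\to M_i\to R^{\oplus 2}\to M_i\to 0,
\]
while $E(M_i)=M_2$, as the only vertex adjacent to the leaf $i$ is vertex $2$, so the Auslander--Reiten sequence~(\ref{arsequence}) reads
\[
0\to M_i\to M_2\to M_i\to 0.
\]
For the equality $\dim_{\mathbb C}\mathrm{Ext}^1_R(M_i,M_i)=2$ I would simply quote the Iyama--Wemyss computation already carried out in the $D_4$ example above.

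To see that the two classes span, observe that both are nonzero: an Auslander--Reiten sequence is never split, and the presentation sequence is nonsplit because $M_i$ is not free while $R^{\oplus 2}\not\cong M_i^{\oplus 2}$ by Krull--Schmidt. Were the two classes linearly dependent, one would be a unit multiple of the other; but scaling an extension class by a unit only pushes it out along an automorphism of $M_i$, which does not change the isomorphism type of the middle term, so we would obtain $R^{\oplus 2}\cong M_2$, contradicting the indecomposability of $M_2$. In a two-dimensional space, linear independence of the two classes is exactly the asserted spanning.

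For the last assertion I would form the universal extension over $V:=\mathrm{Ext}^1_R(M_i,M_i)\cong\mathbb A^2$: choosing a basis $\eta_1,\eta_2$ of $\mathrm{Ext}^1_R(M_i,M_i)$ with dual linear coordinates $x_1,x_2$ on $V$, the class $\eta_1\otimes x_1+\eta_2\otimes x_2$ defines an $R\otimes_{\mathbb C}\mathbb C[x_1,x_2]$-module $\mathcal N$ fitting in a sequence $0\to M_i\otimes\mathbb C[x_1,x_2]\to\mathcal N\to M_i\otimes\mathbb C[x_1,x_2]\to 0$, flat over $\mathbb C[x_1,x_2]$ as an extension of flat modules, whose restriction at a point $\xi\in V$ is the extension classified by $\xi$, with middle term $N_\xi$. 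The sequence gives $\mathrm{rk}\,N_\xi=2$, so $\mu(N_\xi)\ge 2$; and if $\mu(N_\xi)=2$ then a minimal presentation $R^{\oplus 2}\twoheadrightarrow N_\xi$ becomes an isomorphism after inverting the nonzero divisors of $R$, so its kernel is torsion, hence zero (being a submodule of $R^{\oplus 2}$), and $N_\xi\cong R^{\oplus 2}$. The coherent sheaf $\mathcal N/\mathfrak m\mathcal N$ on $V$ has fiber dimension $\mu(N_\xi)$ at $\xi$, so $\{\xi:N_\xi\cong R^{\oplus 2}\}=\{\xi:\mu(N_\xi)\le 2\}$ is Zariski open by upper semicontinuity; it is nonempty since it contains the presentation class, and $V$ is irreducible, hence this locus is open and dense, which is the claim.

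The step requiring the most care is the last one: one must verify that the universal extension is genuinely flat over $V$, that forming $\mathcal N/\mathfrak m\mathcal N$ commutes with restriction to a point so that its fibers really are the $N_\xi/\mathfrak m N_\xi$, and that $\mathcal N$ restricts at $\xi$ to the extension classified by $\xi$, so that the presentation class does occur in the family; the remaining arguments are short. This matches the $A_n$ picture of Lemma~\ref{maps}: for $a=b$ the generic middle term likewise degenerates to $R^{\oplus 2}$ exactly when $\overline{2a}=n+1$.
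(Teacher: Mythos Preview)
Your argument is correct and complete. The linear-independence step (scaling an extension class by a unit preserves the isomorphism type of the middle term, so $\eta_1$ and $\eta_2$ cannot be proportional) and the semicontinuity step (the universal extension $\mathcal N$ over $V\cong\mathbb A^2$ is flat, $\mathcal N/\mathfrak m\mathcal N$ is coherent on $V$ with fibres $N_\xi/\mathfrak m N_\xi$, and $\mu(N_\xi)=2$ forces $N_\xi\cong R^{\oplus 2}$ since the kernel of a minimal presentation is torsion in a free module over the domain $R$) are both sound; the technical points you flag in your last paragraph are standard and hold as stated.

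Your route differs from the paper's. The paper argues by an explicit Baer-sum computation: it builds the pullback diagram for $\eta_1$ and $\eta_2$, identifies the skew-diagonal copy of $M_1$ inside $M_1\oplus R^{\oplus 2}$, and shows directly that the quotient $N$ representing $\eta_1+\eta_2$ is isomorphic to $R^{\oplus 2}$ via a second pullback diagram and the short-five lemma. From this it infers the stratification claim. Your semicontinuity argument is cleaner and more portable (it would work verbatim whenever one knows a single extension with free middle term and the dimension of $\mathrm{Ext}^1$), and it makes the ``open dense'' assertion immediate rather than relying on the separate generic-extension lemma. The paper's computation, on the other hand, yields the extra concrete fact that $\eta_1+\eta_2$ itself has middle term $R^{\oplus 2}$, which is the template for the later $D_6$ and $E_n$ arguments where one must chase specific classes through successive pullback diagrams; your method would not directly produce those explicit identifications.
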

\begin{proof}
We take $\eta_1, \eta_2 \in \mathrm{Ext}^1_R(M_i, M_i)$ and their respective representatives:
\begin{align}
& \eta_1:  0  \to M_i \to M_2 \to M_i \to 0 \\
& \eta_2:  0  \to M_i \to R^{\oplus 2} \to M_i \to 0.
\end{align}
The lemma will be proved if we show that the sum of the representatives of $\eta_1$ and $\eta_2$ will represent a class whose middle term is $R^{\oplus 2}$. Without lose of generality, we can take $i = 1$. First there is the following pull-back diagram:

\begin{center}
\includegraphics{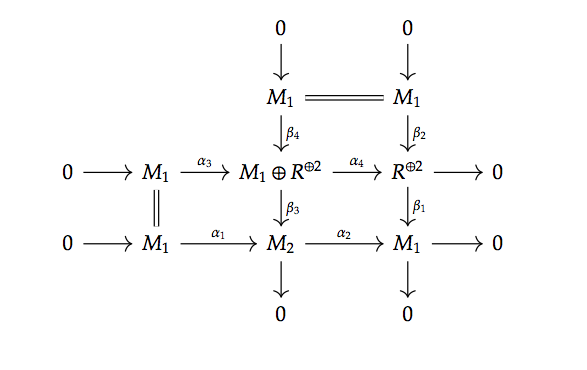}
\end{center}

where the bottom row and the rightmost column represent $\eta_1$ and $\eta_2$. The term in the center is $M_1 \oplus R^{\oplus 2}$ since the middle row must be a trivial extension of $R^{\oplus 2}$ by $M_1$. In particular, $\alpha_3(m) = (m, 0)$ and $\beta_4(m) = (0, \beta_2(m))$ for any $m \in M_1$. Note that $M_1 \oplus R^{\oplus 2}$ contains three copies of $M_1$: via $\alpha_1$ and $\beta_2$, and also there is the skew-diagonal coming from the pullback via $\lambda \coloneqq (\mathrm{Id}, -\beta_2)$. Then by definition, the sum $\eta_1 + \eta_2$ is the extension whose middle term is the quotient of $M_1 \oplus R^2$ by the skew-diagonal, denoted by $N$. Hence the following is exact:
 \[
 0 \to M_1 \xrightarrow{\lambda} M_1 \oplus R^{\oplus 2}  \to N \to 0.
 \] 
 
To determine $N$, we choose a minimal set of generators of $M_1$ over $R$ so that $M_1 \oplus R^{\oplus 2}$ admits a surjection from $R^{\oplus 4}$. Then one can take the pullback to get 

\begin{center}
\includegraphics{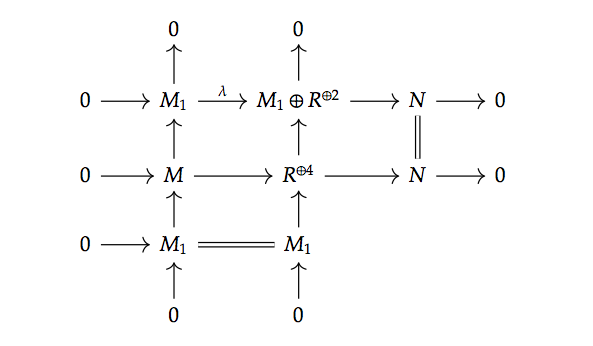}
\end{center}
 
Note that the middle term in the left column $M$ is maximal Cohen Macaulay, so it can be either $M_1 \oplus M_1, M_2$ or $R^{\oplus 2}$. Also note that the middle column is not minimal, meaning that the trivial part $R^{\oplus 2} \to R^{\oplus 2}$ can be taken out, and it will coincide with the left column by short-five lemma. Hence, the middle term in the left column is isomorphic to $R^{\oplus 2}$. Then $N \cong R^{\oplus 2}$ as well. Consequently, the sum $\eta_1 + \eta_2 \in \mathrm{Ext}^1_R(M_1, M_1)$ can be represented by a sequence of the form
 \[
\eta_1 + \eta_2:  0  \to M_1\to R^{\oplus 2} \to M_1 \to 0.
 \] 
This summation respects the $\mathbb{C}$-vector space structure of $\mathrm{Ext}^1_R(M_1, M_1)$. Hence in the stratification of $\mathrm{Ext}^1_R(M_1, M_1)$ there is an open dense part represented by extensions whose middle term is $R^{\oplus 2}$. In particular, one can deform $M_2$ to $R^{\oplus 2}$. 
\end{proof}

The following general statement holds true, and the proof is the same as the preceding lemma.
\begin{prop}
If there exists a class $\eta \in \mathrm{Ext}^1_R(M_i, M_j)$ represented by 
\[
0 \to M_j \to R^{\oplus (\mathrm{rk} M_i + \mathrm{rk} M_j)} \to M_i \to 0.
\]
Then $\eta$ is the generic extension.
\end{prop}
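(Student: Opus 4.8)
The plan is to show that the given class $\eta$ is the unique maximal element of the partial order on $\mathrm{Ext}^1_R(M_i,M_j)$; by the lemma immediately preceding the proposition this is exactly what it means for $\eta$ to be the generic extension. Write $n=\mathrm{rk}(M_i)+\mathrm{rk}(M_j)$, so that the middle term of $\eta$ is the free module $R^{\oplus n}$, and let $\tau\in\mathrm{Ext}^1_R(M_i,M_j)$ be arbitrary, with middle term $M''$; note $M''$ is maximal Cohen--Macaulay of rank $n$ and hence torsion-free over the domain $R$. I want to show $\tau$ lies in the closure of the stratum of $\eta$, which gives $\eta\ge\tau$ and, $\tau$ being arbitrary, the maximality of $\eta$.

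The line $\{\tau+s\eta:s\in\mathbb{A}^1\}\subset\mathrm{Ext}^1_R(M_i,M_j)$ passes through $\tau$ at $s=0$, so it suffices to check that for $s\ne 0$ the middle term of $\tau+s\eta$ is isomorphic to $R^{\oplus n}$. Since multiplication by $s\in\mathbb{C}^{*}$ is an automorphism of $M_j$, the extension $s\eta$ is isomorphic to $\eta$ and in particular has a representative with middle term $R^{\oplus n}$; thus the question concerns the middle term of the Baer sum of an arbitrary extension with one whose middle term is free. This is precisely the situation handled in the proof of Lemma \ref{d4}, and I would reproduce that pullback/pushout computation. In outline: the middle term $N$ of the Baer sum fits in $0\to M_j\xrightarrow{\lambda}M_j\oplus R^{\oplus n}\to N\to 0$ with $\lambda$ the skew-diagonal embedding; choosing a minimal generating set of $M_j$, of size $2\,\mathrm{rk}(M_j)$ by the presentation sequence (\ref{presentation}), one pulls this back along a surjection $R^{\oplus 2\,\mathrm{rk}(M_j)}\oplus R^{\oplus n}\twoheadrightarrow M_j\oplus R^{\oplus n}$ carrying the free summand isomorphically onto the free summand of the target, and then cancels the resulting trivial direct summand $R^{\oplus n}\to R^{\oplus n}$; the short five lemma then identifies the first syzygy of $N$ with a maximal Cohen--Macaulay module $M$ generated by at most $n$ elements. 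To see that $M$ is free, I would invoke the elementary fact that a torsion-free $R$-module generated by as many elements as its rank is free, since a minimal surjection from a free module of that rank has torsion-free kernel of rank zero; hence $M\cong R^{\oplus n}$ and therefore $N\cong R^{\oplus n}$.

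The step I expect to carry the real weight is this last identification. In Lemma \ref{d4} it was obtained by listing the finitely many rank-$2$ maximal Cohen--Macaulay modules over the $D_4$ singularity, which does not transplant cleanly, so the ``generators equal rank implies free'' observation is the genuine replacement. A cleaner alternative that bypasses the diagram chase is a semicontinuity argument: on $V=\mathrm{Ext}^1_R(M_i,M_j)$ the minimal number of generators of the middle term of the universal extension is upper semicontinuous and bounded below by $n=\mathrm{rk}$, hence attains the value $n$ on a dense open subset, on which the middle term is torsion-free of rank $n$ on $n$ generators and so is free; then the generic middle term is $R^{\oplus n}$ and $\eta$ is generic. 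Either way, the only input beyond formal bookkeeping is the fact that a torsion-free module on as many generators as its rank is free.
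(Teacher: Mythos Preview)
Your approach is exactly the paper's: reduce to showing that the Baer sum of an arbitrary $\tau$ with $s\eta$ has free middle term, via the diagram chase of Lemma~\ref{d4}. The paper says no more than ``the proof is the same as the preceding lemma.''

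There is, however, a bookkeeping slip in your chase. After pulling back $0\to M_j\xrightarrow{\lambda} M_j\oplus R^{\oplus n}\to N\to 0$ along $p\colon R^{\oplus 2\,\mathrm{rk}(M_j)}\oplus R^{\oplus n}\twoheadrightarrow M_j\oplus R^{\oplus n}$, the kernel $M=p^{-1}(\lambda(M_j))$ sits in $0\to M_j^{*}\to M\to M_j\to 0$ and has rank $2\,\mathrm{rk}(M_j)$, not $n$; when $\mathrm{rk}(M_i)\neq\mathrm{rk}(M_j)$ your claims ``$M$ is generated by at most $n$ elements'' and ``$M\cong R^{\oplus n}$'' are false as stated, and the short five lemma does not obviously compare the left column with the cancelled middle column. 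The repair is immediate once you unwind what $M$ is: writing $p=(p_1,\mathrm{id})$ and $\lambda=(\mathrm{id},-\iota)$ with $\iota\colon M_j\hookrightarrow R^{\oplus n}$ the inclusion from $\eta$, one has $M=\{(a,-\iota p_1(a)):a\in R^{\oplus 2\,\mathrm{rk}(M_j)}\}$, the graph of $-\iota\circ p_1$, hence $M\cong R^{\oplus 2\,\mathrm{rk}(M_j)}$ is free. Then $N$ is MCM with a length-one free resolution, so $N$ is free of rank $n$ by Auslander--Buchsbaum. Even more directly, one can skip the pullback: in $N=(M_j\oplus R^{\oplus n})/\lambda(M_j)$ the relation $(m,0)\equiv(0,\iota(m))$ shows the image of $M_j$ already lies in the image of $R^{\oplus n}$, so $N$ is generated by $n$ elements and your ``torsion-free with generators equal to rank'' observation applies to $N$ itself.

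Your semicontinuity alternative is correct and is a genuine shortcut over both your first argument and the paper's: it bypasses the diagram entirely and uses only that the universal middle term over $\mathrm{Ext}^1_R(M_i,M_j)$ is flat, so $v\mapsto\dim_k(E_v/\mathfrak{m}E_v)$ is upper semicontinuous, equals $n$ at $\eta$, and hence equals $n$ on a dense open where the middle term must then be free.
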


Following Iyama-Wemyss (\cite[Theorem 4.8, 4.9]{IM}) we can find a basis for $\mathrm{Ext}^1_R(M_i, M_j)$. The conceptual construction of \textit{ladders} in $\tau$-categories was introduced by Iyama (\cite{I05}), which was made more concrete in \cite{IM} for the study of special Cohen-Macaulay modules over general two-dimensional quotient singularities. Here we apply their results in the $ADE$ case. 

\begin{eg}
We take the rank 2 module $M_8$ on the $E_8$ singularity as an example. Besides the syzygy sequence and the Auslander-Reiten sequence starting at $M_8$, the following sequences also start at $M_8$:
\begin{align*}
0 \to M_8 \to & M_5 \to M_7 \to 0;\\
0 \to M_8 \to M_4 & \oplus M_6 \to M_5 \to 0.
\end{align*}
There are other short exact sequences or reflexive modules starting at $M_8$, however, whose middle terms are less explicit. 
\end{eg}

\section{The Hilbert schemes of points on surfaces with rational double points}
The following lemma provides with a necessary condition for a reflexive module to be the syzygy of a zero-dimensional subscheme supported at the singularity $(p \in X, R = \mathcal{O}_{X, p})$. We call these modules \textit{syzygy modules}.

\begin{lemma}\label{firstchern}
Suppose $Z$ is a zero-dimensional subscheme of $X$ supported at $p$ with ideal $I$ of $R$ and first syzygy $\Omega$. Then the determinant of $\Omega$ is trivial in the local class group of $R$.
\begin{itemize}
\item[$A_n$.] Suppose $\Omega = R^{\oplus a} \oplus M_1^{\oplus a_1} \oplus \dots \oplus M_n^{\oplus a_{n}}$. Then $m(I) \coloneqq a_1 + 2a_2 + \dots + na_n$ is divisible by $n + 1$.
\item[$D_{2k}$.] Suppose $n = 2k$ is even. Then $\Omega$ is possibly the direct sum of multiples of: $R, M_{2i - 1}^{\oplus 2}$ for $i = 1, \dots, k$,  $M_{2j}$ for $j = 1, \dots, k - 1$, $M_{2k}^{\oplus 2}, M_{2i - 1} \oplus M_{2j - 1}$ for $1 \leq i \neq j \leq k - 1$, and $M_{2i - 1} \oplus M_{2k - 1} \oplus M_{2k}$ for $i = 1, \dots, k - 1$.
\item[$D_{2k + 1}$.] Suppose $n = 2k + 1$ is odd. Then $\Omega$ is possibly the direct sum of multiples of: $R, M_{2i - 1}^{\oplus 2}$ for $i = 1, \dots, k$, $M_{2k}^{\oplus 4}, M_{2k + 1}^{\oplus 4}, M_{2k} \oplus M_{2k + 1}, M_{2i - 1} \oplus M_{2j - 1}$ for $1 \leq i \neq j \leq k - 1$, and $M_{2j}$ for $j = 1, \dots, k - 1$.
\item[$E_6$.] $\Omega$ is possibly the direct sum of multiples of: $M_1 \oplus M_6, M_2 \oplus M_5, M_1 \oplus M_2, M_3, M_4$.
\item[$E_7$.] $\Omega$ is possibly the direct sum of multiples of: $M_1, M_2, M_4, M_6, M_3^{\oplus 2}, M_5^{\oplus 2}, M_7^{\oplus 2}, M_3 \oplus M_5, M_7 \oplus M_5, M_3 \oplus M_7$.
\item[$E_8$.] Any reflexive module can be a syzygy of a zero-dimensional subscheme from the consideration of the triviality of the determinant.
\end{itemize}
\begin{proof}
All the cases follow immediately by looking at the local class group of each singularity.
\end{proof}
\end{lemma}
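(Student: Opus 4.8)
The plan is to push the two exact sequences that define the syzygy $\Omega$ through the first Chern class map and read off the triviality of $\det(\Omega)$ from the fact that $\mathcal{O}_Z$ has finite length. Since $Z$ is zero-dimensional there is a short exact sequence $0 \to I \to R \to \mathcal{O}_Z \to 0$, and a choice of generators of $I$ gives $0 \to \Omega \to R^{\oplus m} \to I \to 0$ for suitable $m$. I would work with the class homomorphism $c_1$ that sends a coherent $R$-module to the divisor class of its determinant in $\mathrm{Cl}(R)$ --- equivalently, restrict sheaves to the smooth punctured surface $X^0 = X \setminus \{p\}$ and use $\mathrm{Pic}(X^0) \cong \mathrm{Cl}(R)$. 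This map is additive on short exact sequences and annihilates every module supported in codimension $\geq 2$. Since $\mathcal{O}_Z$ has finite length, $c_1(\mathcal{O}_Z) = 0$, so $c_1(I) = c_1(R) - c_1(\mathcal{O}_Z) = 0$, and then $c_1(\Omega) = c_1(R^{\oplus m}) - c_1(I) = 0$. As $c_1$ of a reflexive module is exactly its determinant class, this already gives the first sentence of the lemma.

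For the type-by-type statements, recall that $\Omega$ is maximal Cohen--Macaulay and reflexive (the first syzygy of a finite-colength ideal in a two-dimensional normal domain is such), so since $R$ has only finitely many indecomposable reflexive modules, the Krull--Schmidt theorem gives a unique decomposition $\Omega \cong R^{\oplus a} \oplus \bigoplus_i M_i^{\oplus a_i}$. Additivity of $c_1$ over direct sums then turns the triviality of $\det(\Omega)$ into the single linear relation $\sum_i a_i\, c_1(M_i) = 0$ in the finite abelian group $\mathrm{Cl}(R)$, which is $\mathbb{Z}/(n+1)$ in type $A_n$, $\mathbb{Z}/4$ or $(\mathbb{Z}/2)^2$ in type $D_n$ according to the parity of $n$, $\mathbb{Z}/3$ for $E_6$, $\mathbb{Z}/2$ for $E_7$, and $0$ for $E_8$. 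The last ingredient is the value $c_1(M_i)$ for each indecomposable: this is precisely the colouring recorded on the extended Dynkin diagrams, and it can be pinned down from $c_1(R) = 0$ together with the Auslander--Reiten sequences (\ref{arsequence}), which give $\sum_{j \sim i} c_1(M_j) = 2\,c_1(M_i)$ (the sum over the neighbours of $i$ in the extended diagram, the affine node contributing $c_1(R) = 0$). In type $A_n$ one has $c_1(M_i) = i$, so the constraint reads $\sum_i i\,a_i = m(I) \equiv 0 \pmod{n+1}$, as asserted; for $E_8$ the group is trivial and there is nothing to check. In the remaining cases one enumerates the multiplicity vectors $(a, a_1, \dots)$ solving the relation --- equivalently the minimal zero-sum combinations of the classes $c_1(M_i)$ together with the always-admissible free generator $R$ --- and these are exactly the building blocks listed.

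I do not expect a conceptual obstacle; the one point that genuinely needs care is the additivity of the determinant over short exact sequences of modules that are \emph{not} locally free on the singular surface $X$. I would justify it by restricting everything to $X^0$, where $R$ is regular --- in codimension one its local rings are discrete valuation rings --- so that on $X^0$ the exterior power $\bigwedge^{\mathrm{rk}}$ is honestly multiplicative in short exact sequences, and then observing that for a reflexive (i.e.\ divisorial) module on the normal surface the determinant class in $\mathrm{Cl}(R)$ is already determined by this codimension-one behaviour. Everything else is type-by-type bookkeeping: for each singularity one records the fibres of $i \mapsto c_1(M_i)$ from the colouring and then solves the linear constraint over $\mathrm{Cl}(R)$. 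This is finite and elementary, but it is the laborious part, and organising it by the fibres of the colouring is the way to keep it under control.
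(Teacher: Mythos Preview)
Your argument is correct and is exactly the approach the paper has in mind: the paper's own proof is the single sentence ``All the cases follow immediately by looking at the local class group of each singularity,'' and what you have written is a careful unpacking of that sentence---the additivity of the determinant class on short exact sequences (justified by restriction to the regular locus $X^0$), the vanishing of $c_1(\mathcal{O}_Z)$ for a finite-length module, and then the type-by-type reading of the constraint $\sum_i a_i\,c_1(M_i)=0$ in the known finite group $\mathrm{Cl}(R)$. There is no difference in strategy, only in the level of detail supplied.
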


\begin{prop}\label{key}
Any syzygy module can be generalized to a free module of the same rank, namely, there is a connected family of reflexive modules for which the general member of the family is free, and the special member is the syzygy module. 
\begin{proof}
We proceed in two cases depending on whether $M$ is indecomposable for each singularity. 

($A_n$). Suppose the syzygy module is $M =  R^{\oplus a} \oplus M_1^{\oplus a_1} \oplus \dots \oplus M_n^{\oplus a_{n}}$. We can (but not necessarily) choose to deform $M$ in the following order. First, for any $M_i$ appearing in $M$ with $a_i \geq 2$, we deform $M_i^{\oplus 2}$ to $R \oplus M_{\overline{2i}}$. Each such step increases the free rank of $M$ by 1 and generalize the input module. After finitely many steps, there is no non-free summand with multiplicity greater than 1 in the generalization (still denoted by $M$). Second, we start with $M_i$ and $M_j$ such that $i$ and $j$ are the smallest two indices for summands of $M$. Using the sequence $0 \to M_i \to R \oplus M_{\overline{i + j}} \to M_j \to 0$ we see that $M_i \oplus M_j$ can be generalized to $R \oplus M_{\overline{i + j}}$. Such a step also increases the free rank of $M$ by 1. We can repeat these two steps till the eventual outcome is free. 

($D_4$). It suffices to consider $M_i^{\oplus 2}$ for $i = 1, 3, 4$ and $M_2$. First note that $M_i$ is self-dual for $i = 1, 3, 4$. Hence the presentation sequence $0 \to M_i \to R^{\oplus 2} \to M_i \to 0$ realizes the generalization of $M_i^{\oplus 2}$ to $R^{\oplus 2}$. The case of $M_2$ follows from Lemma \ref{d4}.

($D_6$). We work out $D_6$ explicitly. The modules $M_1, M_2, M_4, M_5, M_6$ are self-dual. Hence $M_1^{\oplus 2}, M_5^{\oplus 2}, M_6^{\oplus 2}$ can be generalized to $R^{\oplus 2}$. Next we compute that $\dim_{\mathbb{C}} \mathrm{Ext}^1_R(M_1, M_1) = 2$, and hence $M_2$ can be generalize to $R^{\oplus 2}$. Two other easy cases are $M_4$ and $M_1 \oplus M_5 \oplus M_6$. We compute that $M_4$ can be generalized to $R^{\oplus 2}$ using extensions in $\mathrm{Ext}^1_R(M_5, M_5)$, which is 2-dimensional and the generic one has $R^{\oplus 2}$ in the middle and the special one has $M_4$ in the middle. For $M_1 \oplus M_5 \oplus M_6$, we note the sequence $0 \to M_1 \to R \oplus M_5 \to M_6 \to 0$, which allows $M_1 \oplus M_5 \oplus M_6$ to be generalized to $R \oplus M_5^{\oplus 2}$, and further to $R^{\oplus 3}$. For $M_3 \oplus M_5 \oplus M_6$, this is the middle term in the Auslander-Reiten sequence for $M_4$: $0 \to M_4 \to M_3 \oplus M_5 \oplus M_6 \to M_4 \to 0$. The same argument shows that $M_3 \oplus M_5 \oplus M_6$ generalizes to $R^{\oplus 4}$ by using $0 \to M_4 \to R^{\oplus 4} \to M_4 \to 0$.

For the remaining case $M_1 \oplus M_3$, we start with two classes in $\mathrm{Ext}^1_R(M_1, M_1)$ represented by 
\[ 
0 \to  M_1 \to R^{\oplus 2} \to M_1 \to 0, \quad \textrm{ and } \quad 0 \to  M_1 \to M_2 \to M_1 \to 0.
\]
Taking the pull-back of them we get the diagram below:

\begin{center}
\includegraphics{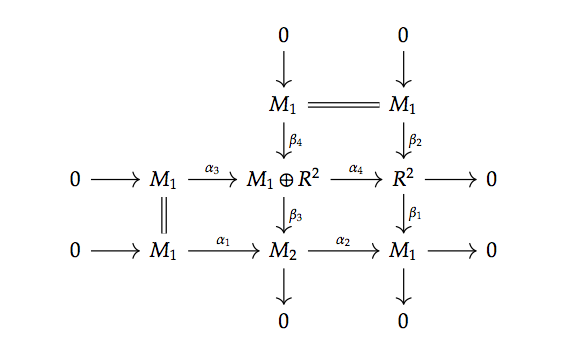}
\end{center}
 
In particular, the middle column is non-trivial: $0 \to M_1 \to M_1 \oplus R^{\oplus 2} \to M_2 \to 0$. Next we take the Auslander-Reiten sequence of $M_2$: $0 \to M_2 \to M_1 \oplus M_3 \oplus R \to M_2 \to 0$.Taking the pull-back of these two sequences we have a diagram below:

\begin{center}
\includegraphics{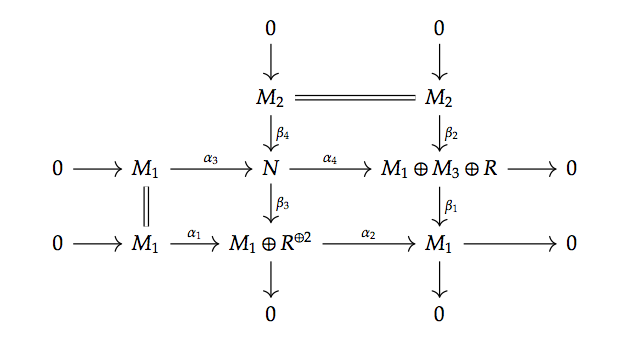}
\end{center}
 
Note that the module $N$ in the center of the diagram is maximal Cohen-Macaulay since $\mathrm{Ext}^i(N, R) = 0$ for all $i \geq 1$ by looking at the long exact sequence after applying $\mathrm{Hom}_R(\bullet, R)$ to either the middle row or the middle column. Now the idea is to look for all possibilities for $N$ to fit in the middle column. We compute that $\dim \mathrm{Ext}^1_R(M_1, M_2) = 2$ and hence $N$ is either $M_1 \oplus R^{\oplus 4}, M_1 \oplus M_2 \oplus R^{\oplus 2}, M_1^{\oplus 2} \oplus M_3 \oplus R$. Among the three choices, $M_1^{\oplus 2} \oplus M_3 \oplus R$ will make the middle row trivial, and $M_1 \oplus M_2 \oplus R^{\oplus 2}$ will trivialize the middle column. So we see that the most general one is $M_1 \oplus R^{\oplus 4}$ and the middle row becomes $0 \to M_1 \to M_1 \oplus R^{\oplus 4} \to M_1 \oplus M_3 \oplus R \to 0$. The inclusion of $M_3$ into the right term $M_1 \oplus M_3 \oplus R$ induces either 
\[
0 \to M_1 \to R^{\oplus 3} \to M_3 \to 0 \quad \textrm{ or } \quad 0 \to M_1 \to M_1^{\oplus 2} \oplus R \to M_3  \to 0.
\]
Either case will generalize $M_1 \oplus M_3$ to something we already know. 

($D_n$ for any even $n$). The only new phenomenon in $D_8$ than $D_6$ is the module $M_4$ whose determinant is trivial. But we can compute that $\dim \mathrm{Ext}^1_R(M_7, M_7) = 4$ and $M_4$ is a possible middle term which can be generalized to $R^{\oplus 2}$. The argument is the same as before. The same proof works for $D_n$ where $n$ is even. 

($D_n$ for any odd $n$). For $D_{2k + 1}$ the new phenomenon comes with the rank 2 module in the middle of the left tail with trivial determinant. For example in $D_7$ the module $M_4$ has trivial determinant. We compute that $\dim \mathrm{Ext}^1_R(M_6, M_7) = 3$, and $M_4$ can be the middle term that generalizes to $R^{\oplus 2}$. 

($E_6$). Note that there are extensions 
\begin{align*}
& 0 \to M_1 \to M_4 \to M_6 \to 0 \\
& 0 \to M_4 \to M_3 \oplus R \to M_4 \to 0 \\
\end{align*}
Hence the same argument as before will take care of all the listed cases except $M_1 \oplus M_2$. We consider the following diagram:

\begin{center}
\includegraphics{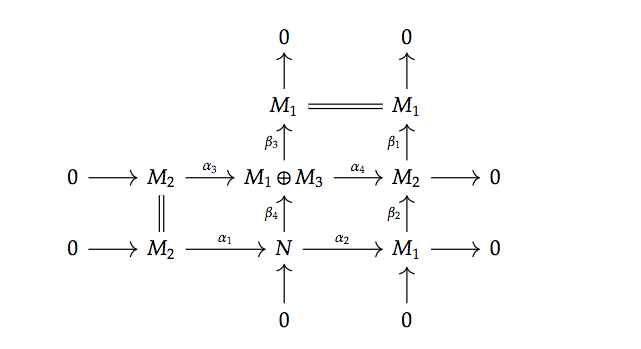}
\end{center}

where we start with the middle row and the right column as the Auslander-Reiten sequence for $M_2$ and $M_1$ respectively. The pull-back of the maps $\alpha_4$ and $\beta_2$ gives the rest of the diagram. The kernel $N$ of the middle column can be considered as the generalization of $M_1 \oplus M_2$, and $M_3$ can be a choice.

($E_7$). First we note that $M_7$ is the only one having rank 1, hence it is self-dual. We also compute that $\dim \mathrm{Ext}^1(M_7, M_7) = 3$, where any extension should have the middle term of rank 2 with trivial determinant. There are only three different such modules other than $M_7 \oplus M_7$, namely $M_6, M_1$ and $R^{\oplus 2}$. As before, we claim that these are all the possible extensions, and the most general one has the form $0 \to M_7 \to R^{\oplus 2} \to M_7 \to 0$. These take care of $M_1, M_7 \oplus M_7$ and $M_6$.

Next we note that $M_1$ is self-dual. The Auslander-Reiten sequence for $M_1$ reads $0 \to M_1 \to R \oplus M_2 \to M_1 \to 0$, whereas the presentation of $M_1$ gives $0 \to M_1 \to R^{\oplus 4} \to M_1 \to 0$. From these two sequences above we see that $M_2$ can be generalized to $R^{\oplus 3}$ by first adding a copy of $R$. Furthermore, $\dim \mathrm{Ext}^1(M_1, M_1) = 4$. So there are extensions $0 \to M_1 \to M_4 \to M_1 \to 0$, and $0 \to M_1 \to M_5 \oplus M_7 \to M_1 \to 0$.

We also see that $M_3$ and $M_5$ are both self-dual, hence $M_3^{\oplus 2}$ and $M_5^{\oplus 2}$ can be generalized to $R^{\oplus 4}$ and $R^{\oplus 6}$ respectively. The two remaining cases are $M_3 \oplus M_7$ and $M_3 \oplus M_5$. The same computation shows that $\dim \mathrm{Ext}^1(M_7, M_3) = 3$, and $\dim \mathrm{Ext}^1(M_5, M_3) = 9$. Hence both cases can be generalized to something we already know.

($E_8$). First we compute that $\dim \mathrm{Ext}^1(M_8, M_8) = 8$. All possible middle terms in extensions of $M_8$ by itself are $R^{\oplus 4}, M_7, M_3, M_6 \oplus R, M_2 \oplus R, M_1^{\oplus 2}, M_1 \oplus R^{\oplus 2}, M_1 \oplus M_8$, and $M_8^{\oplus 2}$ where the last one can only represent the trivial extension. For dimension reason, we see that all of them do show up in the middle of some extension of $M_8$ by itself. In particular, there is a sequence $0 \to M_8 \to M_1 \oplus R^{\oplus 2} \to M_8 \to 0$. Hence to deform $M_1$ we first add a free summand of rank 2 and generalize $M_1 \oplus R^{\oplus 2}$ to $R^{\oplus 4}$. The same argument works for generalizations of $M_2, M_3, M_6$ and $M_7$ to $R^{\oplus 4}$.

For $M_5$ we use the presentation sequence and the Auslander-Reiten sequence of $M_6$ to generalize $M_5$ to $R^{\oplus 6}$. For $M_4$ we look at $\mathrm{Ext}^1(M_8, M_2)$, where there is an extension $0 \to M_2 \to M_4 \to M_8 \to 0$. A surjection $M_4 \to M_8$ can be seen from their matrix factorizations: Suppose the polynomial defining the $E_8$ singularity is $z^2 + x^3 + y^5 \in \mathbb{C}[x, y, z]$. We take matrices 
\[
\phi_8 = \begin{bmatrix}
     & & -x^2 & -y^3 \\
     & & -y^2 & x \\
    x & y^3 & &\\
    y^2 & -x^2 &  &
\end{bmatrix}
\quad 
\mathrm{       and }
\quad 
\phi_4 = \begin{bmatrix}
     & & & & & -x^2 & -y^3 & 0 & 0 & 0 \\
     & & & & & - y^2 & x & 0 & 0 & 0 \\
     & & & & & -y^3 & 0 & -x^2 & xy^2 & -y^4 \\
     & & & & & xy & 0 & -y^3 & -x^2 & xy^2\\
     & & & & & 0 & y^2 & xy & -y^3 & -x^2 \\
     x & y^3 & 0 & 0 & 0 & & & & & \\
     y^2 & -x^2 & 0 & 0 & 0 & & & & & \\
     0 & 0 & x & y^2 & 0 & & & & &\\
     y & 0 & 0 & x & y^2 & & & & & \\
     0 & - y^2 & y & 0 & x & & & & & 
\end{bmatrix}
\]
Then $M_8$ and $M_4$ admit free $R$-resolutions given by matrix factorizations $M(zI_4 - \phi_8, zI_4 + \phi_8)$ and $M(zI_{10} - \phi_4, zI_{10} + \phi_4)$ respectively in the notations of Example \ref{a1}. In particular we see that $\phi_8$ shows as a 4-by-4 minor of $\phi_4$. This allows one to deform $M_4$ to some other module corresponding to a more general extension in $\mathrm{Ext}^1(M_8, M_2)$. 

For $M_8$, we cannot deform it using the preceding argument as there is no rank 1 non-trivial reflexive modules over $E_8$. Instead, we deform it to $M_1$ by fitting both $M_8$ and $M_1$ as kernels of some surjective maps of reflexive modules. By \cite{IM} we find the following short exact sequences:
\begin{align*}
& 0 \to M_1 \to R^{\oplus 2} \oplus M_3 \oplus M_5 \oplus M_8 \xrightarrow{\alpha} M_2 \oplus M_4 \oplus M_7 \to 0; \\
& 0 \to M_8 \to M_1 \oplus M_3 \oplus M_5 \oplus M_8 \xrightarrow{\beta} M_2 \oplus M_4 \oplus M_7 \to 0.
\end{align*}
In the above two sequences, the maps $\alpha$ and $\beta$ restricted to their common summand $M_3 \oplus M_5 \oplus M_8$ are identical, since both are induced by the ladders over Auslander-Reiten sequences. Now we can apply a specialization of $R^{\oplus 2}$ to $M_1$ to specialize $\alpha$ to $\beta$, which in turn induces a specialization of $M_1$ to $M_8$. Hence $M_8$ is trivializable.
\end{proof}

\end{prop}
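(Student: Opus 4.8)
The plan is to reduce the statement to a finite, type-by-type check. By Lemma~\ref{firstchern}, triviality of the determinant in the local class group forces any syzygy module to be a direct sum of copies of the short explicit list of ``building blocks'' recorded there for each $ADE$ type; since a direct sum of modules each of which deforms to a free module again deforms to a free module (deform one summand at a time and concatenate the families), it suffices to treat each building block $N$ on its own, and we may freely adjoin a free summand to $N$ first --- this merely replaces the minimal presentation of the relevant ideal by a non-minimal one, and Proposition~\ref{family} applies verbatim to non-minimal presentations, so it is harmless for the eventual application. The single deformation move used throughout is: from a short exact sequence $0\to M_j\to E\to M_i\to 0$ of reflexive modules, scaling the extension class over $\mathbb{A}^1$ yields a connected one-parameter family of reflexive modules with special fibre $M_i\oplus M_j$ and general fibre $E$, \emph{provided} $E$ is the generic middle term; by the proposition following Lemma~\ref{d4} this proviso is automatic whenever the free module $R^{\oplus(\mathrm{rk}(M_i)+\mathrm{rk}(M_j))}$ occurs as a middle term of some class in $\mathrm{Ext}^1_R(M_i,M_j)$. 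Chaining finitely many such $\mathbb{A}^1$'s --- each step that produces a free summand strictly raising the free rank --- gives the desired family from $N$ to a free module.

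With this framework, I would first clear away the easy building blocks. For a self-dual indecomposable $M_i$ the minimal presentation $0\to M_i\to R^{\oplus 2\,\mathrm{rk}(M_i)}\to M_i\to 0$ exhibits $M_i^{\oplus 2}$ as a degeneration of a free module; for a dual pair $M_i\cong M_j^*$ the presentation $0\to M_j\to R^{\oplus(\mathrm{rk}(M_i)+\mathrm{rk}(M_j))}\to M_i\to 0$ does the same for $M_i\oplus M_j$. Together with the explicit sequences of Lemma~\ref{maps} in type $A_n$ --- the moves $M_i^{\oplus 2}\leadsto R\oplus M_{\overline{2i}}$ and $M_i\oplus M_j\leadsto R\oplus M_{\overline{i+j}}$, applied repeatedly, reduce any $A_n$ syzygy module to a free one, the divisibility constraint of Lemma~\ref{firstchern} guaranteeing termination --- and the analogous Iyama--Wemyss sequences in types $D$ and $E$ (e.g.\ $0\to M_1\to R\oplus M_5\to M_6\to 0$ on $D_6$, or $0\to M_1\to M_4\to M_6\to 0$ on $E_6$), this disposes of all summands except a handful of rank-$2$ modules of trivial determinant sitting ``in the interior'' of the diagram: $M_2$ on $D_4$, the middle rank-$2$ node of $D_n$ for general even and odd $n$, and $M_5$, $M_4$ and $M_8$ on $E_8$. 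For each of these I would compute $\dim_{\mathbb{C}}\mathrm{Ext}^1_R(-,-)$ for the relevant pair by the ladder rule, enumerate the finitely many maximal Cohen--Macaulay modules of the correct rank and determinant admissible as a middle term, and then --- exactly as in the proof of Lemma~\ref{d4} --- use pull-back/push-out diagrams to show the generic middle term is the one with the largest free part; when that free part is not yet the whole module, adjoin a free summand and iterate.

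The genuinely delicate case is $M_8$ on $E_8$: it has rank $2$ and trivial determinant, but $E_8$ carries no nontrivial rank-$1$ reflexive module, so there is nothing to ``pivot through'' and the adjoin-a-free-summand trick does not by itself terminate. Here the plan is to realise both $M_8$ and the already-handled $M_1$ as the kernels of two surjections onto the \emph{same} target,
\[
R^{\oplus 2}\oplus M_3\oplus M_5\oplus M_8\twoheadrightarrow M_2\oplus M_4\oplus M_7,\qquad
M_1\oplus M_3\oplus M_5\oplus M_8\twoheadrightarrow M_2\oplus M_4\oplus M_7,
\]
produced from the Iyama--Wemyss ladders over Auslander--Reiten sequences and arranged so that the two maps coincide on the common summand $M_3\oplus M_5\oplus M_8$. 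A specialization of $R^{\oplus 2}$ to $M_1$ then specializes the first surjection to the second, hence specializes its kernel $M_1$ to the kernel $M_8$; running this backwards deforms $M_8$ to $M_1$, and thence to a free module.

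I expect the main obstacle to be precisely this last manoeuvre: one must certify that such a compatible pair of surjections with common target exists, that the map genuinely specializes as the parameter varies, and --- crucially --- that passing to kernels behaves well in the family, so that a specialization of the surjection induces a specialization of its kernel of constant rank (and one still lands in reflexive modules throughout). Keeping simultaneous control of the $\mathrm{Ext}$-computations and of kernels in one-parameter families is the real work. A subsidiary, more clerical obstacle is simply organising the type-by-type verification that in every needed extension the generic middle term is the maximally-free one; this rests on the dimension formula for $\mathrm{Ext}^1$ together with the fact that a free middle term, once it appears at all, is automatically generic (the proposition following Lemma~\ref{d4}).
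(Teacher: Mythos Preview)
Your proposal is correct and follows essentially the same route as the paper: reduce via Lemma~\ref{firstchern} to the finite list of building blocks, clear the easy ones with presentation/Auslander--Reiten sequences and the generic-extension principle (the proposition after Lemma~\ref{d4}), and treat the residual ``interior'' rank-$2$ modules by computing $\mathrm{Ext}^1$ and chasing pull-back diagrams, with $M_8$ on $E_8$ handled by exactly the pair of Iyama--Wemyss ladder surjections you wrote down. The paper's write-up is more granular in the $D_6$, $E_6$, $E_7$ intermediate cases --- e.g.\ $M_1\oplus M_3$ on $D_6$ and $M_1\oplus M_2$ on $E_6$ each get a bespoke two-step pull-back argument rather than a single generic-extension step --- but your iterative ``adjoin a free summand and repeat'' clause covers this, and your diagnosis of where the genuine work lies (compatibility and kernel-flatness in the $M_8$ specialization) is on the mark.
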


\begin{theorem}\label{an}
Suppose $X$ is a quasi-projective normal surface with at worst $ADE$ singularities. Then for any positive integer $d$ the Hilbert scheme of $d$ points $\mathrm{Hilb}^d(X)$ is irreducible.
\begin{proof}
The theorem will be proved after several steps of reductions. First, since deformations of reflexive modules at a singularity on $X$ can always be proceeded in some open affine neighborhood of the singularity and each singularity can be treated separately, we are reduced to the case where $X$ is affine with a single $ADE$ singular point $p$: $(X = \mathrm{Spec}(\mathbb{C}[x, y, z]/\langle f \rangle), p)$. Second, it suffices to show that any length $d$ subscheme $Z$ of $X$ supported at $p$ is smoothable. Third, by Theorem \ref{finitehd}, we note that such a punctured subscheme is smoothable if it has finite projective dimension on $X$. Hence we are reduced to prove that any length $d$ subscheme $Z$ on $X$ supported at $p$ is a specialization of a family of length $d$ subschemes of finite projective dimension.

Now suppose that $Z$ is an arbitrary length $d$ subscheme of $X$ supported at $p$ and that the first syzygy module $M_Z \coloneqq \mathrm{Syz}_1(Z)$ of $I_Z$ is not free. In any event, $M_Z$ is a reflexive $\mathcal{O}_X$-module, hence by Lemma \ref{firstchern}, we obtain a direct sum decomposition of $M_Z = \mathcal{O}_{X}^{\oplus a} \oplus M_1^{\oplus a_1} \oplus \dots \oplus M_n^{\oplus a_{n}}$. The proof is proceeded by induction on the rank of the non-free summand in $M_Z$. The strategy is to use Proposition \ref{key} to deform $M_Z$ into the direct sum of the trivial module. In the resulting family of reflexive modules, the free rank of a general member is at least 1 higher than that of the special member $M_Z$. Then one can proceed with the induction. The induction will terminate after finitely many steps of generalization to a free module because as a syzygy module, the first Chern class of $M_Z$ in the local class group of $(X, p)$ must be zero.
 
Now Proposition \ref{family} indicates that the ideal $I_Z$ is the specialization of ideals of finite projective dimension. Equivalently, the closed point $[Z]$ representing the length $d$ subscheme $Z$ is contained in the Zariski closure of the locus in $\mathrm{Hilb}^d(X)$ parameterizing length $d$ subschemes of finite projective dimension. 
\end{proof}
\end{theorem}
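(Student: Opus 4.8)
The plan is to reduce the irreducibility of $\mathrm{Hilb}^d(X)$ to a statement about smoothability of punctual subschemes, and then to leverage the deformation theory of reflexive modules developed in Section 2. Since the Hilbert-Chow morphism $h : \mathrm{Hilb}^d(X) \to X^{(d)}$ is projective and $X^{(d)}$ is irreducible of dimension $2d$, it suffices to show that the locus $U \subset \mathrm{Hilb}^d(X)$ parameterizing subschemes $Z$ whose support consists of $d$ distinct smooth points of $X$ is dense: this locus is irreducible (it maps isomorphically onto an open dense subset of $X^{(d)}$, the configuration space of $d$ distinct smooth points). Every component of $\mathrm{Hilb}^d(X)$ that meets $U$ is contained in $\overline{U}$; so I need to prove that every point $[Z]$ lies in $\overline{U}$. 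Because a general length $d$ subscheme is a disjoint union of punctual pieces, and since deforming the pieces supported at smooth points is already understood (the smooth-surface punctual Hilbert scheme is irreducible, \cite{B77,I72,ES98}), the problem localizes: it is enough to show that any length $d$ subscheme $Z$ supported entirely at a single singular point $p$ is smoothable, i.e.\ $[Z] \in \overline{U}$. Working in an affine chart, I may assume $X = \mathrm{Spec}(\mathbb{C}[x,y,z]/\langle f \rangle)$ with a unique $ADE$ singularity at $p$.

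The second step is to invoke Theorem \ref{finitehd}: if $I_Z$ has a length two free resolution over $R = \mathcal{O}_{X,p}$ (equivalently finite projective dimension, by Auslander-Buchsbaum, since $R$ is a hypersurface of dimension $2$), then $Z$ is smoothable. So it remains to handle subschemes whose first syzygy module $M_Z = \mathrm{Syz}_1(I_Z)$ is \emph{not} free. Here I deploy the key deformation result, Proposition \ref{key}: the reflexive module $M_Z$ can be connected by a one-parameter family of reflexive modules to a free module of the same rank. The existence of such a family is governed by Lemma \ref{firstchern}, which constrains which reflexive modules can occur as syzygy modules (their determinant is trivial in the local class group), and the explicit extension computations for each $ADE$ type. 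Then Proposition \ref{family} promotes a family of reflexive modules $\{M_t\}$ with $M_0 = M_Z$ to a family of zero-dimensional subschemes $\{Z_t\}$ of $X$ with constant length, such that $M_t = \mathrm{Syz}_1(I_{Z_t})$; for $t$ generic the module $M_t$ is free, hence $Z_t$ has finite projective dimension. This exhibits $[Z]$ as a specialization of points of the finite-projective-dimension locus, which by Theorem \ref{finitehd} lies in $\overline{U}$.

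I would organize the argument as an induction on the rank of the non-free part of $M_Z$: one application of Proposition \ref{key} deforms $M_Z$ to a module whose free rank has strictly increased (each elementary generalization step in the proof of Proposition \ref{key}, such as $M_i^{\oplus 2} \rightsquigarrow R \oplus M_{\overline{2i}}$ or $M_i \oplus M_j \rightsquigarrow R \oplus M_{\overline{i+j}}$, adds a free summand), so after finitely many steps the general member of the iterated family is free. The triviality of the first Chern class of $M_Z$ (Lemma \ref{firstchern}) is exactly what guarantees the induction terminates at a \emph{free} module rather than getting stuck at a non-free reflexive module with nontrivial class. Finally, one checks the dimension statement: the finite-projective-dimension locus is smooth of dimension $2d$ by Proposition \ref{smoothness} (its Zariski tangent space has dimension $2d$ there, and it is dense, so it dominates a $2d$-dimensional variety), hence $\mathrm{Hilb}^d(X)$, being the closure of an irreducible $2d$-dimensional locus, is irreducible of dimension $2d$.

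The main obstacle is Proposition \ref{key} itself, i.e.\ verifying that \emph{every} reflexive module occurring as a syzygy module (equivalently, every reflexive module with trivial determinant in the local class group) actually admits such a generalization to a free module. This is where the $ADE$ classification must be used case by case: for each Dynkin type one must exhibit enough explicit short exact sequences of reflexive modules — coming from minimal presentations, Auslander-Reiten sequences, and the extra extensions produced by the Iyama-Wemyss ladder construction (\cite{IM}) — and then argue, via pull-back/push-out diagrams and the short-five and snake lemmas, that the generic extension in the relevant $\mathrm{Ext}^1$ has a free (or strictly-freer) middle term. The subtle cases are the rank $2$ self-dual modules with trivial determinant sitting at branch points of $D_n$ and $E_n$ (e.g.\ $M_4$ in $D_7$, $D_8$, $E_6$, $E_8$), and above all $M_8$ in $E_8$, for which no nontrivial rank $1$ reflexive module exists to absorb into a free summand; there one must instead realize both $M_8$ and $M_1$ as kernels of two surjections of reflexive modules that agree on a common summand, and specialize $R^{\oplus 2} \rightsquigarrow M_1$ to specialize one surjection to the other, thereby specializing $M_1$ to $M_8$. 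Handling these edge cases cleanly is the technical heart of the proof.
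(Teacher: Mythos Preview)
Your proposal is correct and follows essentially the same strategy as the paper: reduce to the affine local case, reduce smoothability to the finite-projective-dimension locus via Theorem \ref{finitehd}, then use Proposition \ref{key} (governed by Lemma \ref{firstchern}) to deform the syzygy module to a free one and Proposition \ref{family} to translate this into a family of subschemes, with the induction on the non-free rank. You supply somewhat more detail on the initial reduction (via the Hilbert-Chow morphism and the known irreducibility of the punctual Hilbert scheme at smooth points) and on the terminal dimension count than the paper's proof does, and you correctly identify the $E_8$ module $M_8$ as the delicate case inside Proposition \ref{key}; but the architecture is the same.
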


\begin{rmk}
There is a natural stratification of $\mathrm{Hilb}^d(X)$ by locally closed subsets, such that within each stratum the first syzygy module of the subschemes are stably isomorphic, i.e., isomorphic up to free summands. For each fixed $d$, the ranks of the syzygy modules are bounded, hence the stratification is finite. Besides the open dense stratum of schemes of finite projective dimension, all other strata are contained in the punctual Hilbert schemes at the singularities. The preceding Proposition \ref{key} and Theorem \ref{an} suggest that there is a partial order on the set of strata given by inclusion in the closure. For each singular point, this is analogous to the closure relation of nilpotent orbits in the nilpotent cone of the Lie algebra whose Weyl group has the same Dynkin diagram as the singularity. But it is unclear to the author what a precise relation is.
\end{rmk}

\begin{rmk}
In the literature the study of deformations of reflexive modules and of matrix factorizations allows the singularity to deform as well, which does not meet our purpose. Nevertheless, the existence of the versal deformation space of reflexive modules over quotient surface singularities can be viewed as a necessary condition for our deformations (\cite{I00}). 
\end{rmk}

\section{Geometric consequences of irreducibility}
The first corollary of the irreducibility of the Hilbert scheme is a smoothness criterion which states that the condition in Proposition \ref{smoothness} is also sufficient:
\begin{theorem}\label{converse}
Suppose $Z$ be a length $d$ subscheme of the surface $X$ with only $ADE$ singularities. Then $\mathrm{Hilb}^d(X)$ is smooth at $[Z]$ if and only if $I_Z$ has finite homological dimension over $X$.
\begin{proof}
One direction is Proposition \ref{smoothness}. To prove the other direction, it suffices to show that if $Z$ does not have finite homological dimension over $X$ then the dimension of the Zariski tangent space $T_{[Z]}\mathrm{Hilb}^d(X)$ is larger than $2d$. Again, we are reduced to the case $X = \mathrm{Spec}(R)$, the affine surface with only one singular point $p$ and $Z$ is supported at $p$. Let $M_1, \dots, M_n$ be the collection of all indecomposable reflexive $R$-modules up to isomorphism.

Suppose the presentation of the ideal $I_Z$ has the form
\begin{equation}\label{presenideal}
0 \to R^{\oplus a} \oplus \left(\bigoplus_{i = 1}^nM_i^{\oplus a_i}\right) \to R^{\oplus r + 1} \to I_Z \to 0,
\end{equation}
with $r = a + \sum_{i = 1}^na_i\mathrm{rk}(M_i)$. Applying $\mathrm{Hom}_{R}(\bullet, R)$ to (\ref{presenideal}) we obtain the following exact sequence. 
\begin{equation}\label{4term}
0 \to \mathrm{Hom}_{R}(I_Z, R) \to \mathcal{O}_{X}^{\oplus r + 1} \to R^{\oplus a} \oplus \bigoplus_{i = 1}^n(M_i^*)^{\oplus a_i} \to \mathrm{Ext}^1_{R}(I_Z, R) \cong \omega_Z \to 0.
\end{equation}

Next applying $\mathrm{Hom}_{R}(\bullet, \mathcal{O}_Z)$ to (\ref{presenideal}), or equivalently, $\mathrm{Hom}_{R}(\bullet, R) \otimes_R \mathcal{O}_Z$, the following sequences of $\mathcal{O}_{Z}$-modules are exact:
\begin{equation}
0 \to \mathrm{Hom}(I_Z, \mathcal{O}_Z) \to \mathcal{O}_Z^{\oplus r + 1} \to \mathcal{O}_Z^{\oplus a} \oplus \left( \bigoplus_{i = 1}^n \left(M_{i, Z}^*\right)^{\oplus a_i}\right) \to \mathrm{Ext}^1(I_Z, \mathcal{O}_Z) \to 0;
\end{equation}

\begin{equation}
0 \to \mathrm{Tor}_1\left(\bigoplus_{i = 1}^n \left(M_{i}^*\right)^{\oplus a_i}, \mathcal{O}_Z\right) \to \mathcal{O}_Z^{\oplus a} \oplus \left( \bigoplus_{i = 1}^n M_{i, Z}^{\oplus a_i}\right) \to \mathcal{O}_Z^{\oplus 2r - a} \to  \bigoplus_{i = 1}^n \left(M_{i, Z}^*\right)^{\oplus a_i} \to 0,
\end{equation}
where $M_{i, Z} \coloneqq M_i \otimes_{\mathcal{O}_X} \mathcal{O}_Z$, and $M_i^*$ is the $R$-dual of $M_i$ for each $i$. Note that $\mathrm{Syz}_1(I_Z) = \mathcal{O}_{X}^{\oplus a} \oplus \left(\bigoplus_{i = 1}^nM_i^{\oplus a_i}\right)$, then $\mathrm{Syz}_2(I_Z) = \bigoplus_{i = 1}^n\left(M_{i}^*\right)^{\oplus a_i}$. Then the above two sequences combined give:
\begin{equation}\label{ext}
\mathrm{Ext}^1(I_Z, \mathcal{O}_Z) = \dfrac{\dfrac{\mathcal{O}_Z^{\oplus a} \oplus \left( \bigoplus_{i = 1}^n M_{i, Z}^{\oplus a_i}\right)}{\mathrm{Tor}_1\left(\bigoplus_{i = 1}^n \left(M_{i}^*\right)^{\oplus a_i}, \mathcal{O}_Z\right)}}{\dfrac{\mathcal{O}_Z^{\oplus r + 1}}{\mathrm{Hom}(I_Z, \mathcal{O}_Z)}}.
\end{equation}
We write $s, s_i, t_i$, resp. for the lengths of the $\mathcal{O}_Z$-modules $\mathrm{Hom}(I_Z, \mathcal{O}_Z), M_{i, Z}, \mathrm{Tor}_1(M_{i}, \mathcal{O}_Z)$, resp. (or, their dimensions as $\mathbb{C}$-vector spaces). The dimensions $t_i, s_i$ can be computed from the injective periodic resolution of the $M_i$'s. Given the periodic minimal projective resolution of $M_{i}^*$, a minimal injective resolution of $M_{i}$ is obtained by duality:
\begin{equation}\label{injresol}
0 \to M_i  \to \mathcal{O}_X^{\oplus 2\mathrm{rk}(M_i)} \xrightarrow{D_0} \mathcal{O}_X^{\oplus 2\mathrm{rk}(M_i)} \xrightarrow{D_1} \mathcal{O}_X^{\oplus 2\mathrm{rk}(M_i)} \to \dots
\end{equation}
Tensoring (\ref{injresol}) with $\mathcal{O}_Z$ we obtain a periodic complex of $\mathcal{O}_Z$-modules:
\begin{equation}
0 \to M_{i, Z} \to \mathcal{O}_Z^{\oplus 2\mathrm{rk}(M_i)} \xrightarrow{d_0} \mathcal{O}_Z^{\oplus 2\mathrm{rk}(M_i)} \xrightarrow{d_1} \mathcal{O}_Z^{\oplus 2\mathrm{rk}(M_i)} \to \dots
\end{equation}
In particular, $M_{i, Z} = \ker(d_0)$, and $\mathrm{Tor}_1(M_i, \mathcal{O}_Z) = \ker(d_1)/\coker(d_0)$. By symmetry and periodicity we also have $M_{i, Z}^* = \ker(d_1)$ and $\mathrm{Tor}_1(M_{i, Z}^*, \mathcal{O}_Z) = \ker(d_1)/\coker(d_0)$. In particular, $\mathrm{tor}_1(M_{i, Z}^*, \mathcal{O}_Z) = \mathrm{tor}_1(M_{i, Z}, \mathcal{O}_Z)$ for $i = 1, \dots, n$. Therefore, counting the dimensions in (\ref{ext}) we have
\begin{equation}
s + \sum_{i = 1}^n a_i(s_i - t_i - d) = 2d.
\end{equation}
Hence the tangent space $\mathrm{Hom}(I_Z, \mathcal{O}_Z)$ has dimension $2d$ if and only if for each $i = 1, \dots, n$ either $a_i = 0$ or $s_i = d + t_i$ is satisfied. On the other hand, locally each $M_i$ is a direct summand in the regular representation of the finite group $G$ of the singularity, hence $s_i$ can be computed as follows: Extend the ideal $I_Z$ of $Z$ in $\mathcal{O}_X$ to an ideal $\tilde{I}_Z$ of $\mathbb{C}[x, y]$. Then $\tilde{I}_Z$ still has finite colength. We fix a collection of monomials of $\mathbb{C}[x, y]$ which span $V = \mathbb{C}[x, y]/\tilde{I}_Z$ as a complex vector space, inside of which there is the $d$-dimensional subspace $\mathcal{O}_Z$. Now a minimal generating set of $M_i$ as an $S$-sub-$R$-module consists of polynomials $f^{1, 2}_{j}$ in $S$ for $j = 1, \dots, \mathrm{rk}(M_i)$. Then $M_{i, Z}$ is the $\mathcal{O}_Z$-module obtained by the union of translation of $\mathcal{O}_Z$ in $V$ by $\{f^{1, 2}_{j}\}$. In particular, the dimension of $M_{i, Z}$ is $s_i = 2d - \lambda_i$ where $\lambda_i$ is the number of pairs of monomials $g_1, g_2$ as part of vector space basis of $\mathcal{O}_Z$ such that $f_j^1g_1 = f_j^2g_2$ for some $j$. Since $1 \in \mathcal{O}_Z$ cannot be in such a ``syzygetic'' pair, there are at most $d - 1$ pairs. Hence $s_i > d$. Therefore, for the dimension $s$ of $\mathrm{Hom}(I_Z, \mathcal{O}_Z)$ to be equal to $2d$ the only case is where $a_i = 0$ for all $i = 1, \dots, n$, which in turn means that $\mathrm{Syz}_1(I_Z)$ is free.
\end{proof}
\end{theorem}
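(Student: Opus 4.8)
The statement is an equivalence, one half of which is already available, so the plan is to dispatch the forward implication in a line and then concentrate on the converse. If $\mathcal{I}_Z$ has finite homological dimension over $X$, then Proposition \ref{smoothness} gives $\dim_{\mathbb{C}}T_{[Z]}\mathrm{Hilb}^d(X)=2d$, while Theorem \ref{an} says $\mathrm{Hilb}^d(X)$ is irreducible of dimension $2d$; a closed point of an irreducible variety at which the Zariski tangent space has dimension equal to the dimension of the variety is a smooth point, so $[Z]$ is smooth. The content is all in the converse, which I would prove in contrapositive form: assuming $\mathcal{I}_Z$ does \emph{not} have finite homological dimension, the goal is $\dim_{\mathbb{C}}T_{[Z]}\mathrm{Hilb}^d(X)>2d$, and then, using once more that $\mathrm{Hilb}^d(X)$ is $2d$-dimensional by Theorem \ref{an}, conclude that $[Z]$ is a singular point. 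Since smoothness at $[Z]$ and the homological dimension of $\mathcal{I}_Z$ are both detected after localizing and completing at the points of $\mathrm{Supp}(Z)$, and the minimal resolution can only fail to terminate over a singular point, I reduce to $X=\mathrm{Spec}(R)$ with $R$ a local $ADE$ ring with closed point $p$ and $Z$ supported at $p$; write $M_1,\dots,M_n$ for the indecomposable non-free reflexive $R$-modules.

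The core is a direct computation of the tangent space $\mathrm{Hom}_R(I_Z,\mathcal{O}_Z)$. Start from a minimal presentation $0\to\mathrm{Syz}_1(I_Z)\to R^{\oplus r+1}\to I_Z\to 0$ and, by Lemma \ref{firstchern}, write $\mathrm{Syz}_1(I_Z)=R^{\oplus a}\oplus\bigoplus_i M_i^{\oplus a_i}$; since $R$ is a hypersurface its minimal resolutions are eventually $2$-periodic, so $\mathrm{Syz}_2(I_Z)=\bigoplus_i(M_i^*)^{\oplus a_i}$. I would then apply $\mathrm{Hom}_R(-,R)$ to the presentation, tensor the outcome with $\mathcal{O}_Z$, also apply $\mathrm{Hom}_R(-,\mathcal{O}_Z)$ directly, and splice the resulting exact sequences, keeping careful track of the $\mathrm{Tor}_1^R(M_i,\mathcal{O}_Z)$ terms that appear precisely because the $M_i$ are not free. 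The $2$-periodicity together with the self-duality identities for the minimal resolution give $\mathrm{Tor}_1^R(M_i^*,\mathcal{O}_Z)\cong\mathrm{Tor}_1^R(M_i,\mathcal{O}_Z)$, which is exactly what makes the spliced sequences match up, and counting $\mathbb{C}$-dimensions then yields an identity
\[
s+\sum_i a_i\,(s_i-t_i-d)=2d,
\]
where $s=\dim_{\mathbb{C}}\mathrm{Hom}_R(I_Z,\mathcal{O}_Z)$, $s_i=\dim_{\mathbb{C}}(M_i\otimes_R\mathcal{O}_Z)$ and $t_i=\dim_{\mathbb{C}}\mathrm{Tor}_1^R(M_i,\mathcal{O}_Z)$. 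In particular $s=2d$ if and only if for every $i$ either $a_i=0$ or the equality $s_i-t_i=d$ holds.

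It then remains to rule out that equality when $a_i\neq 0$. Since $\mathrm{Hilb}^d(X)$ is irreducible of dimension $2d$ by Theorem \ref{an}, the local dimension at $[Z]$ is $2d$, so $s\geq 2d$ automatically; hence it suffices to show that $s\neq 2d$ whenever some $a_i\neq 0$, i.e.\ that the criterion above is violated. This is the step I expect to be the main obstacle, since it is not formal homological algebra. Here I would invoke the geometric McKay correspondence: realize each $M_i$ as an $R$-direct summand of $\mathbb{C}[u,v]$ with $R=\mathbb{C}[u,v]^G$, so that $M_i\otimes_R\mathcal{O}_Z$ is the span of the translates of a fixed monomial basis of $\mathcal{O}_Z$ by the finitely many module generators of $M_i$. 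A count of the \emph{syzygetic collisions} among those translates — the crucial point being that $1\in\mathcal{O}_Z$ can never take part in such a collision, so that their number is strictly bounded — shows that the criterion for $s=2d$ cannot be met; combined with $s\geq 2d$, this yields $s>2d$. Therefore, if $\mathcal{I}_Z$ has infinite homological dimension then $\mathrm{Syz}_1(I_Z)$ is not free, some $a_i\geq 1$, the tangent space at $[Z]$ is strictly larger than $2d$, and $[Z]$ is a singular point, completing the contrapositive. The two places to get exactly right are the $\mathrm{Tor}$-bookkeeping in the splicing — an off-by-one there would collapse the argument — and the collision estimate; everything else is diagram-chasing on top of the already-established irreducibility of $\mathrm{Hilb}^d(X)$.
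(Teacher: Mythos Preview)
Your proposal follows essentially the same route as the paper: Proposition~\ref{smoothness} for the forward implication, then the contrapositive via the syzygy decomposition, the $\mathrm{Hom}/\mathrm{Tor}$ bookkeeping leading to the identity $s+\sum_i a_i(s_i-t_i-d)=2d$, the periodicity identity $\mathrm{Tor}_1^R(M_i^*,\mathcal{O}_Z)\cong\mathrm{Tor}_1^R(M_i,\mathcal{O}_Z)$, and finally the McKay ``collision'' count with the key observation that $1\in\mathcal{O}_Z$ cannot participate in a syzygetic pair. The only difference is that you are more explicit than the paper in invoking Theorem~\ref{an} on both sides---to upgrade ``tangent space of dimension $2d$'' to genuine smoothness in the forward direction, and to secure the a priori inequality $s\ge 2d$ in the converse---which is a welcome clarification but not a change of strategy.
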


Recall from \cite[Section 3.6]{H01} that $\mathrm{Hilb}^d(\mathbb{A}^2)$ has trivial canonical bundle: $\omega_{\mathrm{Hilb}^d(\mathbb{A}^2)} \cong \mathcal{O}_{\mathrm{Hilb}^d(\mathbb{A}^2)}$. A careful inspection of Haiman's proof leads to a generalization to the singular affine surface $X_n$ with a single $A_n$ singularity. We write $H_d = \mathrm{Hilb}^d(X_n)$ as the Hilbert scheme of $d$ points for simplicity of notation. 
\begin{theorem}\label{canonicalbundle}
The irreducible scheme $H_d$ has a nowhere vanishing $2d$-form.
\end{theorem}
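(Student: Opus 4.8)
The plan is to mimic Haiman's construction of the canonical $2d$-form on $\mathrm{Hilb}^d(\mathbb{A}^2)$ and adapt it to the resolution geometry of the $A_n$ surface. Recall that $X_n = \mathrm{Spec}(\mathbb{C}[u^{n+1},uv,v^{n+1}])$ is the $S_{n+1}$-quotient... more precisely the $\mathbb{Z}/(n+1)$-quotient of $\mathbb{A}^2 = \mathrm{Spec}(\mathbb{C}[u,v])$ by the cyclic group $G \leq \mathrm{SL}(2,\mathbb{C})$ acting diagonally by a primitive $(n+1)$-st root of unity. On $\mathbb{A}^2$ there is the translation-invariant $2$-form $du\wedge dv$, which is $G$-invariant since $\det$ of the action is $1$; its $d$-th exterior box-power descends to a $G^d \rtimes S_d$-invariant top form on $(\mathbb{A}^2)^d$ with logarithmic poles along the big diagonal, and Haiman's argument shows the associated form on $\mathrm{Hilb}^d(\mathbb{A}^2)$ is regular and nowhere vanishing precisely because the Hilbert--Chow morphism is crepant. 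First I would set up the parallel: consider the $\mathbb{Z}/(n+1)$-equivariant Hilbert scheme $\mathrm{Hilb}^{d(n+1)}(\mathbb{A}^2)^G$ (or directly work on $X_n$), and use the reflexive $2$-form $\widetilde{\Omega_{X_n}}^{\wedge 2} = \omega_{X_n}$, which is trivial because $X_n$ is Gorenstein with a trivializing generator $\sigma$ (the image of $du\wedge dv$ on the smooth locus, extended).

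The key steps, in order: (1) Produce on the open stratum $U \subset H_d$ of $d$ distinct smooth points a canonical $2d$-form $\omega_U$, namely the descent of $\sigma^{\boxtimes d}$ along the étale cover $(X_n^{\mathrm{sm}})^d \setminus \Delta \to U^{(d)}$; since $H_d$ is irreducible of dimension $2d$ (Theorem~\ref{an}) and $U$ is dense open, it suffices to show $\omega_U$ extends to a nowhere-vanishing global section of $\omega_{H_d}$. (2) Reduce the extension and non-vanishing claims to a local analytic computation near an arbitrary closed point $[Z] \in H_d$, using that $H_d$ is irreducible so $\omega_{H_d}$ is a rank-one reflexive sheaf and a section nowhere zero on a dense open with no poles in codimension one is a trivialization. (3) Do the local model computation: cover $[Z]$ by the affine charts analogous to Haiman's — on $\mathrm{Hilb}^d(\mathbb{A}^2)$ these are indexed by monomial ideals and the $2d$-form is $\pm\bigwedge d b_i \wedge d a_i$ in the coordinate functions; transport this through the quotient presentation of $X_n$, i.e., work with the $G^d\rtimes S_d$-invariant part of the corresponding charts on $\mathrm{Hilb}^{d(n+1)}(\mathbb{A}^2)$ or, more directly, follow Haiman's Section~3.6 verbatim replacing $\mathbb{C}[x,y]$ by $R=\mathcal{O}_{X_n}$ and using that the universal family is flat and that $R$ is a hypersurface so the relevant Ext and Tor vanishings from Proposition~\ref{smoothness} still hold on the finite-projective-dimension locus. (4) Handle the locus of infinite projective dimension (the part of $H_d$ inside the punctual Hilbert scheme at the singularity), which has codimension $\geq 1$; since extending a reflexive-sheaf section across a closed subset of codimension $\geq 2$ is automatic, one only needs to rule out poles along codimension-one components of this locus, which can be checked on the generic point of each such component by a direct valuation computation in the local ring, or circumvented by noting $\omega_{H_d}$ is reflexive and $\omega_U$ extends over the smooth locus of $H_d$, whose complement has codimension $\geq 2$ by Theorem~\ref{converse}.

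The main obstacle I anticipate is Step~(3)/(4): verifying that the form has neither zeros nor poles along the exceptional-type strata sitting over the singular point $p \in X_n$. Over $\mathbb{A}^2$ Haiman gets this for free from crepancy of the Hilbert--Chow morphism $\mathrm{Hilb}^d(\mathbb{A}^2) \to \mathbb{A}^{2(d)}$; here the relevant statement is that the Hilbert--Chow morphism $h\colon H_d \to X_n^{(d)}$ should be crepant relative to the pushforward of $\omega$, but $X_n^{(d)}$ itself is singular, so one must be careful to formulate and prove the right relative-canonical statement — essentially that $h^*(\text{the top reflexive form on } X_n^{(d)}) $ equals $\omega_{H_d}$ with no discrepancy. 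The cleanest route is probably to avoid $X_n^{(d)}$ and instead use the crepant resolution $Y \to X_n$ together with the induced map $\mathrm{Hilb}^d(Y) \to H_d$ (or the relation to the $G$-equivariant Hilbert scheme of $\mathbb{A}^2$), transporting Haiman's nowhere-vanishing form on $\mathrm{Hilb}^{d(n+1)}(\mathbb{A}^2)$ down; one then must check the descended form acquires no zeros along the exceptional divisor of $\mathrm{Hilb}^d(Y) \to H_d$, which again reduces to a finite character computation for the $\mathbb{Z}/(n+1)$-action on the tangent spaces at torus-fixed points of $\mathrm{Hilb}^{d(n+1)}(\mathbb{A}^2)$, combined with the fact that $du \wedge dv$ is $G$-invariant. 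That character bookkeeping — showing the invariant part of the top form is a unit in each invariant chart — is where the real work lies, though it is entirely explicit.
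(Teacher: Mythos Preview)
Your overall strategy---construct the form on a dense open and extend across codimension $\geq 2$---matches the paper's, but the implementation diverges substantially, and your proposed local models contain a real gap. The paper avoids the quotient presentation entirely: it uses the hypersurface description $X_n = \{xz - y^{n+1} = 0\} \subset \mathbb{A}^3$ and the residue $2$-form $\frac{dx \wedge dy}{\partial f/\partial z}$ (and its cyclic rewritings), then works on two explicit affine opens $U_x, U_z \subset H_d$, namely the loci where $1, x, \dots, x^{d-1}$ (resp.\ $1, z, \dots, z^{d-1}$) span $\mathcal{O}_Z$. On $U_x$ there are honest coordinates $a_0, \dots, a_{d-1}, b_0, \dots, b_{d-1}$, the pulled-back $2d$-form is computed to be $(da_0\wedge \cdots \wedge db_{d-1})/a_0$, and one checks directly that this agrees with the analogous form on $U_z$ on the overlap. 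A separate lemma shows $H_d \setminus (U_x \cup U_z)$ has codimension $\geq 2$, giving the extension. This is elementary and entirely explicit---no equivariant or resolution geometry enters, and Haiman's monomial-ideal charts are not used.

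Your route through the quotient $\mathbb{A}^2 \to X_n$ and the $G$-equivariant Hilbert scheme is more conceptual, but it runs into a genuine obstacle at your Steps~(3)/(4): neither of the comparison maps you invoke exists in the direction you need. There is no morphism $\mathrm{Hilb}^d(Y) \to H_d$ for $Y \to X_n$ the minimal resolution (pushing forward a length-$d$ subscheme of $Y$ need not give a length-$d$ subscheme of $X_n$, or indeed any subscheme), and there is no direct morphism between $\mathrm{Hilb}^{d(n+1)}(\mathbb{A}^2)^G$ and $\mathrm{Hilb}^d(X_n)$: a $G$-invariant length-$d(n+1)$ subscheme of $\mathbb{A}^2$ does not in general descend to a length-$d$ subscheme of $X_n$, and conversely a subscheme of $X_n$ supported at the singular point need not pull back flatly to $\mathbb{A}^2$. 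So ``transporting Haiman's form down'' is not a matter of character bookkeeping at torus-fixed points---the underlying comparison morphism simply is not there on the nose, and building a suitable correspondence (e.g.\ via Nakajima quiver varieties or derived equivalences) would be a substantial detour. The paper's hypersurface-and-residue approach sidesteps all of this; its price is that the argument is tailored to $A_n$, since the $x \leftrightarrow z$ symmetry is what furnishes the second chart $U_z$.
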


\begin{lemma}[Residue formula]
The affine surface $X_n$ has a rational nowhere vanishing 2-form. In fact, the same statement is true for any hypersurface in $\mathbb{A}^3$.
\begin{proof}
Let $dx \wedge dy \wedge dz$ be the volume form on $\mathbb{A}^3$, and write $f = xz - y^{n + 1}$. Then 
\[
\dfrac{dx \wedge dy}{\dfrac{\partial f}{\partial z}} = - \dfrac{dy \wedge dz}{\dfrac{\partial f}{\partial x}} = \dfrac{dz \wedge dx}{\dfrac{\partial f}{\partial y}}  
\]
defines this nowhere vanishing 2-form on $X_n$. This is independent of $f$.
\end{proof}
\end{lemma}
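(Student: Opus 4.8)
The statement to prove is the Residue Formula Lemma: that the affine hypersurface $X_n = \{f = 0\} \subset \mathbb{A}^3$ carries a rational, nowhere-vanishing 2-form, obtained as a Poincaré (adjunction) residue of the ambient volume form. The plan is to write down the three candidate expressions explicitly, verify that they agree as rational 2-forms on $X_n$ using only the defining relation $f = 0$ and its differential, and then check that the resulting form is regular and nowhere-vanishing on the smooth locus, which is all of $X_n$ away from the origin and in fact on all of $X_n$ once interpreted correctly.

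First I would recall the general residue construction. On $\mathbb{A}^3$ with coordinates $x,y,z$ and volume form $dx \wedge dy \wedge dz$, a hypersurface $X = \{f=0\}$ inherits the Poincaré residue $\sigma$, characterized by the relation $df \wedge \sigma = dx \wedge dy \wedge dz$ along $X$. The three displayed fractions are exactly the local expressions for $\sigma$ in the three coordinate charts where, respectively, $\partial f/\partial z$, $\partial f/\partial x$, and $\partial f/\partial y$ are invertible. The key computation is that these agree on overlaps. Taking the differential of $f$,
\[
df = \frac{\partial f}{\partial x}\,dx + \frac{\partial f}{\partial y}\,dy + \frac{\partial f}{\partial z}\,dz,
\]
and restricting to $X$ where $df = 0$ gives the single relation $\frac{\partial f}{\partial x}\,dx + \frac{\partial f}{\partial y}\,dy + \frac{\partial f}{\partial z}\,dz = 0$ among the restricted 1-forms. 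Wedging this relation with $dx$, $dy$, or $dz$ and rearranging yields the pairwise equalities of the three fractions; the sign in the displayed chain (the minus sign on the middle term) comes precisely from the antisymmetry of the wedge product when cyclically permuting the coordinates. For our specific $f = xz - y^{n+1}$ the partials are $\partial f/\partial x = z$, $\partial f/\partial y = -(n+1)y^n$, and $\partial f/\partial z = x$, so the form reads $\sigma = (dx\wedge dy)/x = -(dy\wedge dz)/z = (dz\wedge dx)/(-(n+1)y^n)$, and I would verify that the three agree on $X_n$ directly from $xz = y^{n+1}$.

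Next I would argue nowhere-vanishing. The point is that the three coordinate partials $z$, $(n+1)y^n$, $x$ cannot vanish simultaneously except at the origin, since their common zero locus on $X_n$ forces $x = z = y = 0$; hence at every smooth point at least one partial is a unit locally, and in that chart $\sigma$ is manifestly a regular 2-form with no zeros (it is a coordinate volume form divided by a nonvanishing function). This shows $\sigma$ generates the sheaf of 2-forms at each smooth point, i.e. it is a nowhere-vanishing section of $\omega_{X_n}$ on the smooth locus $X_n^0 = X_n \setminus \{0\}$. Since $X_n$ is a normal Gorenstein surface and $X_n^0$ has complement of codimension $2$, the trivialization of $\omega_{X_n^0}$ extends across the singular point, confirming that $\omega_{X_n} \cong \mathcal{O}_{X_n}$ and that $\sigma$ is the promised global generator. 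Finally, since nothing in the computation used the specific polynomial beyond the fact that it defines a hypersurface in $\mathbb{A}^3$, the identical argument proves the last sentence of the lemma for an arbitrary hypersurface $\{f=0\} \subset \mathbb{A}^3$ (with the partials of the given $f$).

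\textbf{Main obstacle.} The computational heart—verifying the three fractions coincide—is routine linear algebra with the relation $df|_X = 0$, so the only genuine subtlety is the claim that ``this is independent of $f$'' and that the form is nowhere-vanishing even though each individual chart expression has an apparent pole. The care needed is to interpret each fraction not as a meromorphic form on $\mathbb{A}^3$ but as a regular form on the relevant open chart of $X_n$, and to confirm that the three charts cover all of $X_n$ (they cover the smooth locus, and the extension across the singular point is justified by normality). I expect the cleanest exposition to state the characterizing relation $df \wedge \sigma = dx\wedge dy\wedge dz$ first, derive the chart equalities from it, and only then substitute the explicit partials.
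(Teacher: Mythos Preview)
Your proposal is correct and follows the same Poincar\'e residue approach as the paper; the paper's proof simply writes down the three equivalent expressions and asserts they define the form, whereas you have supplied the verification (via $df|_X = 0$), the covering argument for nowhere-vanishing on the smooth locus, and the extension across the singular point by normality. Your version is a fully justified expansion of the paper's terse statement of the formula.
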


We follow the notations of Haiman: for a linear form $l$ in $x, y, z$ write $U_l$ for the open subset of $H_d$ parameterizing subschemes $Z$ such that $\mathcal{O}_Z \cong \langle 1, l, \dots, l^{d - 1}\rangle$ as $d$-dimensional vector spaces, and by $W$ the open subset of $H_d$ parameterizing reduced length $d$ subschemes.

\begin{lemma}\label{openset}
The complement of $U_x \cup U_z$ has codimension at least 2 in $H_d$.
\begin{proof}
We prove the statement in the case of the quadric cone $Q$, and show that $U_x$ is open dense in $H_d$. The argument remains valid for general $X_n$. Note that a general point $[Z] \in U_x$ corresponds to an ideal $I$ of $R$ of the form: 
\[
I = \langle x^d - a_0 - a_1x - \dots - a_{d - 1}x^{d - 1}, y - b_0 - b_1x - \dots - b_{d - 1}x^{d - 1},  z - c_0 - c_1x - \dots - c_{d - 1}x^{d - 1} \rangle.
\]
Suppose $Z$ is a reduced scheme supported on the line $l_x = \langle y, z \rangle \subset Q$, then the coefficients $a_0, \dots, a_{d - 1}$ are the elementary symmetric functions on the $x$-coordinates of the $d$ points of $Z$. Then $b_0, \dots, b_{d - 1}$ are uniquely determined by the interpolation relation $y_i = \mathbf{b}(x_i)$ for the $y$-coordinates of these points (here $\mathbf{b}(x_i) = b_0 + b_1x_i + \dots + b_{d - 1}x_i^{d - 1}$). Once the $a_i$ and $b_i$ are determined and $a_0 \neq 0$, then the $c_i$ are solved by the syzygetic relation given by $f = xz - y^2$. If $a_0 = 0$, then we see that $b_0 = 0$ as well. Consequently $c_0, \dots, c_{d - 2}$ are uniquely solved in terms of $a_1, \dots, a_{d -1}, b_1, \dots, b_{d - 1}, c_{d - 1}$, and $c_{d - 1}$ will not appear in the eventual expression of $z$ in terms of $x, \dots, x^{d - 1}$. This shows that $U_x$ is an open subset of $\mathbb{A}^{2d}: U_x = \mathrm{Spec}(\mathbb{C}[a_0, \dots, a_{d - 1}, b_0, \dots, b_{d - 1}]_{a_0}) \cup \mathrm{Spec}(\mathbb{C}[a_1, \dots, a_{d - 1}, b_1, \dots, b_{d - 1}])$.

The complement of $U_x \cup U_z$ in $W$ consists of schemes consisting of $d$ distinct points $p_1, \dots, p_d \in Q$ with at least two indices $i \neq j$ and two indices $k \neq l$ such that $x(p_i) = x(p_j)$ and $z(p_k) = z(p_l)$. This is a codimension 2 subset of $W$. The complement $H_d \setminus W$ has codimension 1 in $H_d$. Now a general point in $H_d \setminus W$ represents a scheme $Z$ with a length $2$ subscheme $Z_1$ and $d - 2$ reduced closed points, which constitute an open subset of $H_d \setminus W$. This open subset is not contained in $H_d \setminus (U_x \cup U_z)$. 
\end{proof}
\end{lemma}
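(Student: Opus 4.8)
The plan is to split the complement $H_d \setminus (U_x \cup U_z)$ along the open dense locus $W$ of reduced subschemes (dense because $H_d$ is irreducible of dimension $2d$ by Theorem \ref{an} and every length $d$ scheme specializes from a reduced one), writing
\[
H_d \setminus (U_x \cup U_z) = \big(W \setminus (U_x \cup U_z)\big) \;\cup\; \big((H_d \setminus W) \setminus (U_x \cup U_z)\big),
\]
and bounding the dimension of each piece by $2d - 2$. The guiding principle is that membership in $U_x$ is exactly the condition that the coordinate function $x$ be a primitive element of the Artinian algebra $\mathcal{O}_Z$, i.e.\ that $1, x, \dots, x^{d-1}$ be a $\mathbb{C}$-basis, and likewise for $z$; failing to lie in $U_x \cup U_z$ means that both $x$ and $z$ fail to be primitive, which I will read as two independent constraints.

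For the reduced part I would first observe that for $[Z] \in W$, with $Z = \{p_1, \dots, p_d\}$ distinct points, the algebra $\mathcal{O}_Z \cong \mathbb{C}^d$ is \'etale, so $x$ is primitive iff the values $x(p_1), \dots, x(p_d)$ are pairwise distinct; hence $W \setminus U_x = D_x$, the locus where some two points share an $x$-value, and similarly $W \setminus U_z = D_z$. Thus $W \setminus (U_x \cup U_z) = D_x \cap D_z$, and it remains to show $\dim (D_x \cap D_z) \le 2d - 2$. The key structural input is that $(x, z) \colon X_n \to \mathbb{A}^2$ is generically finite: its fibers are $\{y^{n+1} = xz\}$, which are finite. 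Consequently requiring one pair of points to agree in $x$ and another (possibly overlapping) pair to agree in $z$ imposes two independent codimension-$1$ conditions, even when the two pairs share a point, since a generic point cannot move while preserving both its $x$- and its $z$-value. A short case analysis (disjoint pairs, pairs sharing one point, or a single pair agreeing in both $x$ and $z$) then gives $\dim(D_x \cap D_z) \le 2d - 2$; the sublocus of $W$ of schemes meeting the singular point $p$ is already of dimension $2d-2$ and may be discarded at the outset.

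For the non-reduced part I would use that the discriminant $H_d \setminus W$ has codimension $1$, its unique codimension-$1$ component being the closure $\overline{B}$ of the locus $B$ of subschemes consisting of one length-$2$ subscheme $Z_1$ supported at a smooth point $q$ with tangent direction $v$, together with $d - 2$ distinct reduced smooth points. For such $Z$ one has $\mathcal{O}_Z \cong (\mathbb{C}[\epsilon]/\epsilon^2) \times \mathbb{C}^{d-2}$, and $x$ is primitive precisely when the values of $x$ at the $d - 1$ support points are distinct and $dx|_q(v) \neq 0$; both are open dense conditions on $B$, since $x$ is a nonconstant function on the smooth irreducible surface $X_n^0$. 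Hence the generic member of $\overline{B}$ lies in $U_x$, so $\overline{B} \setminus (U_x \cup U_z)$ is a proper closed subset of the irreducible $(2d-1)$-dimensional $\overline{B}$ and has dimension $\le 2d - 2$. All remaining strata of $H_d \setminus W$, namely subschemes with a fatter local structure, with several fat points, or with support meeting the singular point $p$, already have codimension $\ge 2$, by the dimension estimates for punctual Hilbert schemes at smooth and at $A_n$ points. Combining the two parts yields the claim.

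I expect the main obstacle to be the non-reduced part: one must verify that $\overline{B}$ is the only boundary component of dimension $2d-1$ and that the loci supported (in part) at the singular point $p$ contribute only in codimension $\ge 2$. I would control the latter by bounding the dimension of the punctual Hilbert scheme $\mathrm{Hilb}^{m}(X_n, p)$ and noting that forcing even one point of $Z$ onto $p$ costs the two parameters of that point; the former I would handle through the standard description of the generic degeneration of a reduced scheme on a surface as the appearance of a single simple tangent vector.
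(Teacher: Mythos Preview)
Your proposal is correct and follows essentially the same route as the paper: split along the reduced locus $W$, identify $W\setminus(U_x\cup U_z)$ with the locus where both an $x$-coincidence and a $z$-coincidence occur (codimension $2$), and handle $H_d\setminus W$ by observing its generic point is a single length-$2$ scheme plus $d-2$ reduced points, which generically lies in $U_x$. The only differences are cosmetic: the paper opens with an explicit coordinate description of $U_x$ as an open subset of $\mathbb{A}^{2d}$ (used later in the proof of Theorem~\ref{canonicalbundle}), which you omit, while you are more explicit about the case analysis for overlapping pairs in $D_x\cap D_z$ and about bounding the deeper strata of $H_d\setminus W$.
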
 

\begin{proof}[Proof of Theorem \ref{canonicalbundle}]
We first prove the statement over the open dense subset $W$. Restricting to $W \cap U_x$, by the proof of Lemma \ref{openset} we note that the coefficients $a_0, \dots, a_{d - 1}, b_0, \dots, b_{d - 1}$ are regular functions on $U_x$. The rational $2d$-form $dx_1 \wedge \dots \wedge dx_d \wedge dy_1 \wedge \dots \wedge dy_d / \Pi_{i = 1}^d(\partial f / \partial z_i)$ on $X_n^{d}$ is $S_d$-invariant and hence descends to a regular $2d$-form on the smooth locus of $X_n^{(d)}$, which in turn lifts to a regular $2d$-form to $W \cap U_x$. In particular, if we write $W^{\circ}$ for the open locus in $W$ of reduced schemes supported off the singularity in $X_n$, then we have the identity over $W^{\circ} \cap U_x$:
\begin{equation}\label{2dform}
\dfrac{d a_0 \wedge d a_1 \wedge \dots \wedge da_{d - 1} \wedge db_0 \wedge \dots \wedge db_{d - 1}}{a_0} = (-1)^{d} \dfrac{dx_1 \wedge \dots \wedge dx_d \wedge dy_1 \wedge \dots \wedge dy_d}{\Pi_{i = 1}^d(\partial f / \partial z_i)}.
\end{equation}
Next we notice that $W \setminus W^{\circ}$ parameterizes reduced schemes of the form $Z = z_1 +  \dots + z_d$ where $z_1 = 0$. This has codimension 2 in $W$, hence the rational form \ref{2dform} extends to a regular $2d$-form on $W \cap U_x$. By the symmetry of $x$ and $z$ in the defining equation of the surface $X_n$, we obtain another regular $2d$-form on $W^{\circ} \cap U_z$ analogous to \ref{2dform}. 

Explicitly, a point in $U_z$ represents a scheme that is defined by an ideal of the form 
\[ I = \langle  z^d - A_0 - A_1z - \dots - A_{d - 1}z^{d - 1}, y - B_0 - B_1z - \dots - B_{d - 1}z^{d - 1}, x - C_0 - C_1z - \dots - C_{d - 1}z^{d - 1}\rangle.
\]
The $2d$-forms $d a_0 \wedge d a_1 \wedge \dots \wedge da_{d - 1} \wedge db_0 \wedge \dots \wedge db_{d - 1}/a_0$ and $d A_0 \wedge d A_1 \wedge \dots \wedge dA_{d - 1} \wedge dB_0 \wedge \dots \wedge dB_{d - 1}/A_0$ coincide over the intersection $W \cap U_x \cap U_z$. So there is a nowhere vanishing $2d$ form on $U_x \cup U_z$ which extends to a nowhere vanishing $2d$ form on $H_d$.
\end{proof}

The rest of the section is inspired by \cite{H98} and \cite{ES14}. Denote by $A = \mathbb{C}\llbracket u^2, uv, v^2 \rrbracket$ and $R = \mathbb{C}[u^2, uv, v^2]$ the complete local ring of the affine quadric surface $Q \subset \mathbb{A}^3$ at the vertex $0 \in Q$ and the affine coordinate ring of $Q$ respectively. Fix a positive integer $d$, and write $B = B^0$ for the coordinate ring of $Q^{(d)}$. The $2d$-variable polynomial ring $R_{d} = \mathbb{C}[u_1, \dots, u_d, v_1, \dots, v_d]$ is a $B$-module. In $R_{d}$ there is a sub-$B$-module $B^1$, consisting of $S_d$-alternating polynomials, i.e.,
\[
B^1 = \{f \in R_d \mid \deg(f) \equiv 0(2),  \sigma  (f) = \mathrm{sgn}(\sigma) \cdot f, \forall \sigma \in S_d\}.
\]
We define $B^r \coloneqq \{f_1 \cdot \dots \cdot f_r \mid f_i \in B^1, \forall i = 1, \dots, r\}$ for any $r \geq 2$, and $\mathbf{B} = \bigoplus_{i \geq 0}B^i$.
Note that for any elements $f \in B^r$ and $g \in B^s$, we have $fg \in B^{r + s}$. Hence $\mathbf{B}$ is a graded $B^0$-algebra.

Suppose $D = \{(p_1, q_1), \dots, (p_d, q_d)\} \subset \Lambda$ is any subset with cardinality $\abs{D} = d$. Associated to $D$ there is the monomial matrix $M(D) \coloneqq \begin{bmatrix} u^{p_i}v^{q_i}\end{bmatrix}$ up to a permutation of the elements of $D$. We define the \textit{lattice discriminant} associated to $D$ to be the discriminant $\det(D) \coloneqq \det M(D)$. Up to sign, $\det(D)$ is a well-defined $S_d$-alternating polynomial, hence it defines an element in $B^1$. 

Write $\Lambda \subset \mathbb{N}^2$ for the sub-lattice of the semigroup of $R$. The minimal generating set of $\Lambda$ is denoted by $\Lambda_0 = \{(2, 0), (1, 1), (0, 2)\}$. A \textit{staircase} in $\Lambda$ is a subset $\Delta \subset \Lambda$ such that its complement $E(\Delta) \coloneqq \Lambda \setminus \Delta \subset \Lambda$ is closed under the semi-group addition, i.e., $E(\Delta) + \Lambda_0 = E(\Delta)$. Let $\Delta \subset \Lambda$ be a staircase, the \textit{corner set} of $\Delta$ is the subset $\mathcal{C}(\Delta) \subset E(\Delta)$ that minimally generate $E(\Delta)$ under the semi-group addition, i.e., $\mathcal{C}(\Delta) = \{\alpha \in E(\Delta) \mid \alpha - \mathbf{a} \in \Delta,  \forall \mathbf{a} \in \Lambda \}$. Let $\Delta$ be a staircase, the \textit{border} of $\Delta$ is the set of vertices $\mathcal{B}(\Delta) = (\bigcup_{\mathbf{a} \in \Lambda_0} \langle \Delta + \mathbf{a} \rangle) \setminus \Delta$.

The following theorem is a generalization of \cite[Proposition 2.6]{H98}.

\begin{theorem}\label{blowup}
The scheme $\mathrm{Proj}(\mathbf{B})$ with the natural projection $\theta: \mathrm{Proj}(\mathbf{B}) \to Q^{(d)} = \mathrm{Spec}(B)$ is isomorphic to $\mathrm{Hilb}^d(Q)$ with the Hilbert-Chow morphism over $Q^{(d)}$.
\end{theorem}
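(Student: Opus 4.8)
# Proof Proposal for Theorem \ref{blowup}

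The plan is to imitate Haiman's proof of \cite[Proposition 2.6]{H98}, working relative to the Hilbert--Chow morphism $h\colon \mathrm{Hilb}^d(Q)\to Q^{(d)}=\mathrm{Spec}(B^0)$. Let $\mathcal{T}=\pi_*\mathcal{O}_{\widetilde{\mathrm{Hilb}}^d(Q)}$ be the tautological rank $d$ bundle and $\mathcal{L}=\bigwedge^d\mathcal{T}$ its determinant, a line bundle on $\mathrm{Hilb}^d(Q)$ that is relatively ample over $Q^{(d)}$. For a size $d$ subset $D\subset\Lambda$, restricting the monomials $u^{p}v^{q}$, $(p,q)\in D$, to the universal subscheme and taking their wedge gives a section of $\mathcal{L}$ whose value at a reduced point is the lattice discriminant $\det(D)$; this identifies $B^{1}$ with a $B^{0}$-submodule of $h_*\mathcal{L}$. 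First I would check that $\mathcal{L}$ is generated by these sections: after fixing a term order on the semigroup $\Lambda$, every length $d$ subscheme $Z\subset Q$ has $\mathcal{O}_Z$ spanned by the standard monomials of $I_Z$, which form a staircase $\Delta\subset\Lambda$ with $\abs{\Delta}=d$, so $\det(\Delta)$ is nonvanishing at $[Z]$. Since $\mathbf{B}$ is generated over $B^{0}$ by its degree one part, global generation yields a $Q^{(d)}$-morphism $\beta\colon\mathrm{Hilb}^d(Q)\to\mathrm{Proj}(\mathbf{B})$ with $\beta^*\mathcal{O}(1)\cong\mathcal{L}$; it is birational because over the locus of reduced configurations supported off the vertex some $\det(D)$ is invertible and $h$ is already an isomorphism there.

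It then suffices to prove $\beta$ is an isomorphism on a suitable affine cover. For a size $d$ staircase $\Delta\subset\Lambda$ put $U_\Delta=\{[Z]\mid\det(\Delta)(Z)\neq0\}$, an open subscheme of $\mathrm{Hilb}^d(Q)$; by the term-order argument the $U_\Delta$ cover $\mathrm{Hilb}^d(Q)$, one has $\beta^{-1}\bigl(D_+(\det(\Delta))\bigr)=U_\Delta$, and the $D_+(\det(\Delta))$ cover $\mathrm{Proj}(\mathbf{B})$ because $\mathbf{B}$ is a domain and $\beta$ is surjective. On $U_\Delta$ one has the standard affine-chart description of the Hilbert scheme (\cite[\S7]{FGA}, or the affine-plane case of \cite{H98}): a point of $U_\Delta$ is the same datum as ``structure constants'' $\lambda_{\alpha\gamma}$, $\alpha\in\mathcal{B}(\Delta)$, $\gamma\in\Delta$, expressing each border monomial $u^{\alpha}$ modulo $I_Z$ in the basis $\{u^{\gamma}\}_{\gamma\in\Delta}$, and these identify $U_\Delta$ with the closed subscheme of an affine space cut out by the coherence (commutativity) relations among the $\lambda_{\alpha\gamma}$. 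On the other side, $\bigl(\mathbf{B}[\det(\Delta)^{-1}]\bigr)_0$ is generated over $B^{0}$ by the ratios $\det(D)/\det(\Delta)$, and Cramer's rule applied to monomial matrices shows that for a single border exchange $D=(\Delta\setminus\{\gamma\})\cup\{\alpha\}$ one has $\det(D)/\det(\Delta)=\pm\lambda_{\alpha\gamma}$ on the reduced locus, hence everywhere by density, matching the two coordinate systems.

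Finally I would construct the inverse $D_+(\det(\Delta))\to U_\Delta$ directly: over $D_+(\det(\Delta))$ set $\lambda_{\alpha\gamma}:=\pm\det\bigl((\Delta\setminus\{\gamma\})\cup\{\alpha\}\bigr)/\det(\Delta)$ (and the analogous higher minors), and observe that these satisfy the coherence relations; this amounts to a family of Plücker-type identities among lattice discriminants over the non-free semigroup $\Lambda\subset\mathbb{N}^2$, the restricted form of the identities Haiman uses for $\mathbb{N}^2$. Granting these, the $\lambda_{\alpha\gamma}$ define a flat family of length $d$ subschemes of $Q$ over $D_+(\det(\Delta))$, hence a morphism to $U_\Delta$ which is a two-sided inverse to $\beta|_{U_\Delta}$; gluing over all $\Delta$ shows $\beta$ is an isomorphism carrying $h$ to $\theta$, as claimed. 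The main obstacle I anticipate is precisely this last verification — establishing the coherence/Plücker identities for lattice discriminants over $\Lambda$, equivalently that $\bigl(\mathbf{B}[\det(\Delta)^{-1}]\bigr)_0$ is cut out by no more than the coherence relations; the simplicity of the $A_1$ singularity (a unique nontrivial reflexive module $P$, see Example \ref{a1}, and $\mathrm{Cl}(R)\cong\mathbb{Z}/2$) should keep the list of relevant staircases and border moves manageable. Should the direct matching prove unwieldy, an alternative endgame is available: $\beta$ is proper and birational, the complement of $U_x\cup U_z$ in $\mathrm{Hilb}^d(Q)$ has codimension $\geq2$ by Lemma \ref{openset} and maps onto a set of codimension $\geq2$, $\mathrm{Hilb}^d(Q)$ is normal (irreducible by Theorem \ref{an}, regular in codimension one by Theorem \ref{converse} together with a dimension estimate on the non-free-syzygy locus, and Serre's $S_2$), and if $\mathrm{Proj}(\mathbf{B})$ is likewise normal then van der Waerden's purity of the exceptional locus forces $\beta$ to be an isomorphism.
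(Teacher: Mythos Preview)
Your approach is essentially the paper's: both work on the open cover $\{U_\Delta\}$ by staircases, both identify the key Cramer-type identity $\det\bigl((\Delta\setminus\{\gamma\})\cup\{\alpha\}\bigr)/\det(\Delta)=\pm C^{\alpha}_{\gamma}$ on the reduced locus and extend by density, and both use this to compare coordinates on the two sides. The difference is only in the endgame, and here the paper's argument is simpler than the one you propose.

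You worry about the ``main obstacle'': verifying that the ratios $\det(D)/\det(\Delta)$ satisfy the coherence relations of Proposition~\ref{cab}, i.e.\ constructing the inverse $D_+(\det(\Delta))\to U_\Delta$ directly via Pl\"ucker-type identities for lattice discriminants over the semigroup $\Lambda$. The paper bypasses this entirely. Having produced the morphism $f\colon \mathrm{Hilb}^d(Q)\to \mathrm{Proj}(\mathbf{B})$ (via the universal property of the blow-up, after passing to the Veronese $\mathbf{B}^{[2]}$ so that $\det^2(\Delta)\in B^0$ is an honest function on $Q^{(d)}$), the paper observes that $f$ is projective and birational between two schemes projective over and birational to $Q^{(d)}$, hence surjective; and then, rather than building an inverse, shows $f$ is a \emph{closed immersion} by checking that $f^*\mathcal{O}_{\mathrm{Proj}(\mathbf{B})}\to\mathcal{O}_{\mathrm{Hilb}^d(Q)}$ is surjective. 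For this only the easy direction of the Cramer identity is needed: each generator $C^{\alpha}_{\gamma}$ of the affine coordinate ring $S^\Delta$ is already $f^*\theta^*\bigl(\det(D)/\det(\Delta)\bigr)$ for $D=(\Delta\setminus\{\gamma\})\cup\{\alpha\}$. A surjective closed immersion onto an irreducible (reduced) target is an isomorphism, so no Pl\"ucker verification, and no appeal to normality of $\mathrm{Proj}(\mathbf{B})$ or purity of branch locus, is required. Your alternative endgame via normality would work but is strictly more effort; your primary endgame (explicit inverse) asks for identities you do not need.
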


To prove the theorem we give an open covering of $\mathrm{Hilb}^d(Q)$ and a system of coordinates on each open set. Fixing a staircase $\Delta$ of cardinality $d$, we define a subfunctor $\underline{Hilb}^d_{Q, \Delta}: \mathbb{C}-\underline{\mathrm{Sch}} \To \underline{\mathrm{Set}}$ of the Hilbert functor of points as follows: For a scheme $U$, $\underline{Hilb}^d_{Q, \Delta}(U)$ is the set of closed subschemes $Y \subset Q \times U$, flat and finite of degree $d$ over $U$, such that the composition of $\mathcal{O}_U$-algebra homomorphisms
$\phi_{\Delta, U}(Y): \mathcal{O}_U[\Delta] \xhookrightarrow{\iota} \mathcal{O}_U\otimes_{\mathbb{C}} R \rightarrow \mathcal{O}_Y$ is surjective, where $\mathcal{O}_U[\Delta]$ denotes the rank $d$ locally free $\mathcal{O}_U$-submodule of $\mathcal{O}_U\otimes_{\mathbb{C}} R$ generated by elements whose exponents are in $\Delta$ with $\mathcal{O}_U$-algebra structure induced by the inclusion $\iota$ ($\mathbb{C}[\Delta]$ is the $d$-dimensional $\mathbb{C}$-vector space spanned by $u^av^b$ for $(a, b) \in \Delta$ and as a $\mathbb{C}$-algebra $\mathbb{C}[\Delta] \cong R/I_{\Delta}$).

\begin{prop}
This subfunctor $\underline{Hilb}^d_{Q, \Delta}$ is represented by a subscheme $\mathrm{Hilb}^d_{\Delta}(Q) \subset \mathrm{Hilb}^d(Q)$. Any closed point $[Z]$ of $\mathrm{Hilb}^d_{\Delta}(Q)$ can be generalized to $[Z_{\Delta}]$, defined by the initial ideal $I_{\Delta}$ of $I_Z$ via some 1-parameter subtorus $\mathbb{G}_m \subset \mathbb{T}$.
\begin{proof}
Let $U$ be any scheme, and let $f: U \To \mathrm{Hilb}^d(Q)$ be a morphism which induces a family of closed subschemes $Y$ of $U \times Q$, flat and finite of degree $d$ over $U$. Suppose $\phi_{\Delta, U}(Y)$ is surjective. Suppose $x \in U$ is a (not necessarily closed) point in $U$ with coordinate ring $\mathcal{O}_x = \mathcal{O}_U/\mathfrak{m}_x$. Write $Y_x$ as the fiber of $Y$ over $x$, and $I_x$ as the ideal of $Y_x$ in $R \otimes_{\mathbb{C}} \mathcal{O}_x$. The composition $\phi_{\Delta, U}(Y)$ restricts to an isomorphism of finite $\mathcal{O}_x$-algebras: $\phi_x: \mathcal{O}_x[\Delta] \To \mathcal{O}_{Y_{x}}$. Hence there exists $g_1, \dots, g_r \in \mathcal{O}_x[\Delta]$ such that
\[
\{f_i = u^{\alpha_i}v^{\beta_i} - g_i \mid i = 1, \dots, r, (\alpha_i, \beta_i) \in \mathcal{C}(\Delta), \mathrm{Exp}(g_i) \in \Delta\}
\]
is a generating set of the ideal $I_x$. Since $\Delta$ is a finite set, one can choose a weight pair $(\lambda_1, \lambda_2)$ for a one-parameter torus so that the weights of $u^{\alpha_i}v^{\beta_i}$ are all higher than the weight of any monomials with exponents inside $\Delta$. As a result, $\lim_{t \to 0}(t^{\lambda_1}, t^{\lambda_2})\cdot f_i = u^{\alpha_i}v^{\beta_i}$ for all $i$. Comparing the length we have $\langle u^{\alpha_1}v^{\beta_1}, \dots, u^{\alpha_r}v^{\beta_r}\rangle = I_{\Delta}$. This shows that $f(x) \in \mathrm{Hilb}^d_{\Delta}(Q)$.

Conversely, for any point $i: x \hookrightarrow \mathrm{Hilb}^d_{\Delta}(Q)$ corresponding to a closed subscheme $Z$ of $Q \times x$, by definition of the subset $\underline{Hilb}^d_{Q, \Delta}(x)$ there exists a set of generators of $I_Z$ of the form $f_i = u^{\alpha_i}v^{\beta_i} - g_i$ for $i = 1, \dots, r$. Choosing the one-parameter torus as above, one shows that $\phi_x: \mathcal{O}_x[\Delta] \to \mathcal{O}_Z$ is an $\mathcal{O}_x$-linear isomorphism.

The surjectivity of $\phi_{\Delta, U}$ is an open condition, so the subscheme $\mathrm{Hilb}^d_{\Delta}(Q)$ is open in $\mathrm{Hilb}^d(Q)$. 
\end{proof}
\end{prop}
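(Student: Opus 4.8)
The plan is to prove the two assertions separately: first, that the subfunctor is cut out of $\underline{Hilb}^d_Q$ by an open condition, hence representable by an open subscheme of $\mathrm{Hilb}^d(Q)$; and second, that this open chart is the attracting set, under a suitable one-parameter subgroup of $\mathbb{T}$, of the torus-fixed monomial point $[Z_\Delta]$. For representability I would work over the universal family $\mathcal{Z} \subset Q \times \mathrm{Hilb}^d(Q)$ with projection $\pi$ to the base. Since $\mathcal{Z}$ is finite and flat of degree $d$, the sheaf $\mathcal{A} := \pi_* \mathcal{O}_{\mathcal{Z}}$ is locally free of rank $d$. The inclusion $\mathbb{C}[\Delta] \hookrightarrow R$ composed with the quotient $R \twoheadrightarrow \mathcal{O}_{\mathcal{Z}}$ induces a canonical $\mathcal{O}$-linear map $\Phi \colon \mathcal{O}_{\mathrm{Hilb}} \otimes_{\mathbb{C}} \mathbb{C}[\Delta] \To \mathcal{A}$ between locally free sheaves of the same rank $d$. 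For such a map surjectivity at a point is equivalent to being an isomorphism there, equivalently to the non-vanishing of $\det \Phi$, a section of the line bundle $\sheafhom(\det(\mathcal{O}\otimes\mathbb{C}[\Delta]), \det\mathcal{A})$. Its non-vanishing locus is open, and I define $\mathrm{Hilb}^d_\Delta(Q)$ to be this open subscheme.

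It then remains to identify this open subscheme with the representing object of $\underline{Hilb}^d_{Q,\Delta}$. Given a family $Y \subset Q\times U$ with classifying morphism $f\colon U \To \mathrm{Hilb}^d(Q)$, the map $\phi_{\Delta,U}(Y)$ is precisely $f^*\Phi$, because finite-flat pushforward commutes with arbitrary base change, so forming $\mathcal{A}$ and $\Phi$ is compatible with pullback. Since $\det(f^*\Phi) = f^*(\det\Phi)$ and the pullback of a unit is a unit, $\phi_{\Delta,U}(Y)$ is surjective if and only if $f$ factors through the non-vanishing locus of $\det\Phi$. This is exactly the statement that $\mathrm{Hilb}^d_\Delta(Q)$ represents the subfunctor, and the openness of the surjectivity condition is the final sentence of the claim.

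For the degeneration statement, fix a closed point $[Z]\in \mathrm{Hilb}^d_\Delta(Q)$, so that $\phi_x\colon \mathbb{C}[\Delta]\To \mathcal{O}_Z$ is an isomorphism and the monomials $u^a v^b$ with $(a,b)\in\Delta$ form a $\mathbb{C}$-basis of $\mathcal{O}_Z = R/I_Z$. For each corner $(\alpha_i,\beta_i)\in\mathcal{C}(\Delta)$ I rewrite $u^{\alpha_i}v^{\beta_i}$ modulo $I_Z$ in this basis, producing $f_i = u^{\alpha_i}v^{\beta_i} - g_i \in I_Z$ with $g_i$ supported on $\Delta$. Because $E(\Delta) = \Lambda\setminus\Delta$ is a semigroup ideal minimally generated by the corner set, a division argument shows the $f_i$ reduce every element of $I_Z$ to a $\Delta$-supported element, which must vanish by injectivity of $\phi_x$; hence the $f_i$ generate $I_Z$. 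I would then choose a single weight pair $(\lambda_1,\lambda_2)\in\mathbb{Z}^2$ for the subtorus $\mathbb{G}_m \hookrightarrow \mathbb{T}$, $t\mapsto (t^{\lambda_1},t^{\lambda_2})$, making each corner monomial the leading term of its $f_i$ simultaneously; such a common weight exists precisely because $E(\Delta)$ lies above $\Delta$ relative to the semigroup order, which is the geometric content of the staircase condition. The resulting Gröbner degeneration gives a flat family over $\mathbb{A}^1$ extending the orbit $t\cdot[Z]$, with $\lim_{t\to 0}t\cdot f_i = u^{\alpha_i}v^{\beta_i}$. Comparing colengths then pins down the limit: the flat limit has colength $d$, it contains $\langle u^{\alpha_i}v^{\beta_i}\rangle = I_\Delta$, and $I_\Delta$ itself has colength $d$, so the flat limit equals $I_\Delta$, giving $\lim_{t\to 0}t\cdot[Z] = [Z_\Delta]$.

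I expect the main obstacle to lie in this second part rather than in the routine openness argument. The delicate points are verifying that the corner generators $f_i$ genuinely generate $I_Z$ and that one common one-parameter subgroup realizes all corner monomials as leading terms with flat limit exactly $I_\Delta$. Both rest on the combinatorics of the staircase — that $E(\Delta)$ is a semigroup ideal generated by $\mathcal{C}(\Delta)$ — which simultaneously drives the division/reduction argument and guarantees the existence of the common weight vector, while the colength count is what forces the degeneration to land at the single monomial point $[Z_\Delta]$.
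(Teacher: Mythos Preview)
Your proposal is correct and follows essentially the same approach as the paper: the openness of the surjectivity condition for $\phi_{\Delta,U}$ gives representability by an open subscheme, and the border-basis generators $f_i = u^{\alpha_i}v^{\beta_i} - g_i$ together with a suitably chosen weight pair $(\lambda_1,\lambda_2)$ yield the one-parameter degeneration to $I_\Delta$, with the colength comparison pinning down the limit. Your treatment of representability via the determinant section of the universal map $\Phi$ between rank-$d$ locally free sheaves is somewhat more explicit than the paper's fiberwise argument, but the content is the same.
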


Next we describe the affine coordinate ring of each $\mathrm{Hilb}^d_{\Delta}(Q)$. Suppose $\Delta$ is a staircase of cardinality $d$ with corner set $\mathcal{C}(\Delta) = \{(\alpha_0, \beta_0), \dots, (\alpha_r, \beta_r)\}$. Let $[Z] \in \mathrm{Hilb}^d_{\Delta}(Q)$ be a closed points.

\begin{lemma}\label{coordinate system}
There is a generating set of the ideal defining the scheme $Z$ of the form $\{f_0, \dots, f_r\}$ where $f_i = u^{\alpha_i}v^{\beta_i} - g_i$ with $g_i = \sum_{(a, b) \in \Delta}C_{a, b}^{\alpha_i, \beta_i}u^av^b$.
\begin{proof}
By the definition of $\mathrm{Hilb}^d_{\Delta}(Q)$, fixing the scheme $Z$ with ideal $I_Z \subset R$, any monomial in $R$ is equivalent to a polynomial whose exponent sets is a subset of $\Delta$ modulo the ideal $I_Z$, i.e., for any $(r, s) \in \Lambda$, there exist $C^{r, s}_{a, b} \in \mathbb{C}$ such that
\[
u^rv^s \equiv \sum_{(a, b) \in \Delta}C^{r, s}_{a, b}u^av^b (\textrm{mod } I_Z).
\]
The relations among the $C^{r, s}_{a, b}$ are give as follows. Multiplying both sides of the above congruence equation by $u^2, uv$ or $v^2$ we obtain
\begin{equation*}
C^{r + 2, s}_{a, b} = \sum_{(a', b') \in \Delta} C^{r, s}_{a', b'}C^{a' + 2, b'}_{a, b}, \quad C^{r + 1, s + 1}_{a, b} = \sum_{(a', b') \in \Delta} C^{r, s}_{a', b'}C^{a' + 1, b' + 1}_{a, b},
\quad C^{r, s+ 2}_{a, b} = \sum_{(a', b') \in \Delta} C^{r, s}_{a', b'}C^{a', b' + 2}_{a, b}.
\end{equation*}
Hence for any ideal $I_Z$ one can use these relations to reduce the generating set such that each generator has exactly one term that is not in $\Delta$ but in $\mathcal{C}(\Delta)$.
\end{proof}
\end{lemma}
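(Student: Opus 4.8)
\emph{Proof proposal.} The plan is to exploit the defining property of the open set $\mathrm{Hilb}^d_{\Delta}(Q)$ at the point $[Z]$: the composite $\mathbb{C}[\Delta] \hookrightarrow R \twoheadrightarrow R/I_Z = \mathcal{O}_Z$ is surjective, and since source and target are $\mathbb{C}$-vector spaces of the same dimension $d$, it is in fact an isomorphism. First I would record the immediate consequence: for every exponent $(r,s) \in \Lambda$ the monomial $u^r v^s$ has a unique representative inside $\mathbb{C}[\Delta]$ modulo $I_Z$, i.e. there are unique scalars $C^{r,s}_{a,b} \in \mathbb{C}$ with $u^r v^s \equiv \sum_{(a,b) \in \Delta} C^{r,s}_{a,b}\, u^a v^b \pmod{I_Z}$. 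Specializing to the corner exponents $(\alpha_i, \beta_i) \in \mathcal{C}(\Delta)$ then produces elements $f_i = u^{\alpha_i} v^{\beta_i} - g_i \in I_Z$ of exactly the asserted shape, with $g_i = \sum_{(a,b) \in \Delta} C^{\alpha_i,\beta_i}_{a,b}\, u^a v^b$.

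The heart of the matter is to upgrade the inclusion $J := (f_0,\dots,f_r) \subseteq I_Z$ to an equality, which I would establish by a reduction-to-normal-form argument followed by a dimension count. Since $E(\Delta) = \Lambda \setminus \Delta$ is generated under $\Lambda_0$-addition by the corner set, every monomial $u^r v^s$ with $(r,s) \in E(\Delta)$ is divisible by some corner monomial $u^{\alpha_i} v^{\beta_i}$, hence is congruent modulo $J$ to $u^{r - \alpha_i} v^{s - \beta_i}\, g_i$, again a $\mathbb{C}$-linear combination of monomials of $R$. To guarantee that iterating this rewriting stops, I would use the weight pair $(\lambda_1, \lambda_2)$ of positive integers furnished by the preceding proposition, chosen so that $\lambda_1 \alpha_i + \lambda_2 \beta_i$ is strictly larger than $\lambda_1 a + \lambda_2 b$ for every corner $(\alpha_i,\beta_i)$ and every $(a,b) \in \Delta$. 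Then each rewriting step replaces a monomial by monomials of strictly smaller $(\lambda_1,\lambda_2)$-weight, and since these weights are nonnegative integers the process terminates, writing $u^r v^s$ modulo $J$ as a $\mathbb{C}$-combination of $\Delta$-monomials. Hence $R/J$ is spanned over $\mathbb{C}$ by $\{u^a v^b : (a,b) \in \Delta\}$, so $\dim_{\mathbb{C}} R/J \le d$; combined with the surjection $R/J \twoheadrightarrow R/I_Z = \mathcal{O}_Z$, whose target has dimension exactly $d$, this forces $J = I_Z$. Equivalently, $\{f_0,\dots,f_r\}$ is a standard basis of $I_Z$ for the weight order just used.

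Finally I would record the displayed quadratic relations among the $C^{r,s}_{a,b}$: multiplying the congruence $u^r v^s \equiv \sum_{(a,b)} C^{r,s}_{a,b}\, u^a v^b \pmod{I_Z}$ by each semigroup generator $u^2$, $uv$, $v^2$ and re-expanding the resulting monomials $u^{a+2}v^b$, $u^{a+1}v^{b+1}$, $u^a v^{b+2}$ in terms of $\Delta$-monomials yields precisely the three identities stated; these are the compatibility conditions that will be used in the sequel to put coordinates on $\mathrm{Hilb}^d_{\Delta}(Q)$. The only step that genuinely requires care is the termination of the rewriting process, which rests on the existence of a compatible weight function — equivalently, on the finiteness of the corner set $\mathcal{C}(\Delta)$; granting termination, the dimension count closing the argument is routine.
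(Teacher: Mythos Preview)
Your proof is correct and follows essentially the same route as the paper: both use the isomorphism $\mathbb{C}[\Delta] \cong \mathcal{O}_Z$ to produce the coefficients $C^{r,s}_{a,b}$ and then argue that the corner relations $f_i$ already generate $I_Z$. You are more explicit than the paper---supplying the weight-based termination (borrowed from the preceding proposition) and closing with the clean dimension count $\dim_{\mathbb{C}} R/J \le d = \dim_{\mathbb{C}} R/I_Z$---whereas the paper simply records the multiplicative relations among the $C^{r,s}_{a,b}$ and asserts that one can ``use these relations to reduce the generating set'' without spelling out the induction.
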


As $Z$ varies the $C^{r, s}_{a, b}$'s define a collection of regular functions on $\mathrm{Hilb}^d_{\Delta}(Q)$. The relations in the proof of the previous lemma hold valid as relations among regular functions on $\mathrm{Hilb}^d_{\Delta}(Q)$.

\begin{prop}\label{cab}
Let $\Delta$ be a staircase in $\Lambda$ of cardinality $d$. The affine coordinate ring of the scheme $\mathrm{Hilb}^d_{\Delta}(Q) = \mathrm{Spec}(S^{\Delta})$ is given by $S^{\Delta} = \mathbb{C}[C^{\alpha}_{\beta}]/I^{\Delta}$ for $\alpha \in \mathcal{B}(\Delta)$, $\beta \in \Delta$ and
\begin{equation}
I^{\Delta} = \left\langle C^{\alpha + \lambda}_{\beta} - \sum_{\gamma \in \Delta}C^{\alpha}_{\gamma}C^{\gamma + \lambda}_{\beta},  \sum_{\gamma \in \Delta}C^{\epsilon + \lambda}_{\gamma}C^{\gamma + \lambda'}_{\beta} - \sum_{\gamma \in \Delta}C^{\epsilon + \lambda'}_{\gamma}C^{\gamma + \lambda}_{\beta} \right\rangle,
\end{equation}
for all $\alpha \in \mathcal{B}(\Delta), \lambda, \lambda' \in \Lambda_0$ such that $\alpha + \lambda \in \mathcal{B}(\Delta)$, with the additional convention that $C^{\alpha}_{\beta} = 0$ and $C^{\beta}_{\beta} = 1$ for $\alpha, \beta \in \Delta, \alpha \neq \beta$.
\begin{proof}
We first show that $\mathrm{Hilb}^d_{\Delta}(Q)$ embeds into an infinite dimensional affine space $\mathrm{Spec}(S')$, where $S' = \mathbb{C}[C^{r, s}_{a, b}]$ for $(r, s) \in E(\Delta)$ and $(a, b) \in \Delta$ and the ideal defining $\mathrm{Hilb}^d_{\Delta}(Q)$ in $\mathrm{Spec}(S')$ is of the same form as $I_{\Delta}$ as in the statement of the lemma without the restriction that $\alpha$ is chosen to be inside $\mathcal{B}(\Delta)$ such that $\alpha + \lambda \in \mathcal{B}(\Delta)$. We denote this ideal by $I'_{\Delta}$ and $S'_{\Delta} = S'/I'_{\Delta}$.

Let $Y$ be a family of length $d$ subscheme of $Q$ over a scheme $U$ with the property that $\rho: \mathcal{O}_U[\Delta] \xhookrightarrow{\iota} \mathcal{O}_U \otimes_k S \xrightarrow{\mu} \mathcal{O}_Y$ is surjective, and let $\phi: U \to \mathrm{Hilb}^d_{\Delta}(Q)$ be the corresponding morphism. The coordinate ring of $U$ is denoted by $B = \Gamma(U, \mathcal{O}_U)$ and the ideal of $Y$ in $U \times Q$ is denoted by $I_Y$. We define a $\mathbb{C}$-algebra homomorphism $\tau: S'_{\Delta} \to B$ as follows. For any $(r, s) \in E(\Delta)$,  since $\rho$ is surjective $\mu(1 \otimes u^rv^s) \in B \otimes_{\mathbb{C}} S /I_Y$ lifts to a unique element in $B[\Delta]$ as an $B$-linear combination of $u^av^b$'s for $(a, b) \in \Delta$. We define $\tau(C^{r, s}_{a, b})$ to be the coefficient of $u^av^b$ of the image of $u^rv^s$ under the lifting map.

Now we check that this is a well-defined $\mathbb{C}$-algebra homomorphism. Since $\mu$ is the canonical $\mathcal{O}_U$-algebra surjection, by comparing $\mu(1 \otimes u^rv^s)$ with $\mu(1 \otimes u^{r + 2}v^s)$ (and $\mu(1 \otimes u^{r + 1}v^{s + 1})$, $\mu(1 \otimes u^{r}v^{s + 2})$) we see that $\tau (C^{\alpha + \lambda}_{\beta} - \sum_{\gamma \in \Delta}C^{\alpha}_{\gamma}C^{\gamma + \lambda}_{\beta}) = 0$ for any vertex $\alpha \in E(\Delta)$ and basis element $\lambda \in \Lambda_0$. This shows that $\tau$ is multiplicative. Also, we consider any $(r_i, s_i)$ for $i = 1, 2, 3, 4$ such that the four vertices in $\Lambda$ form a parallelogram. By induction on the lengths of the sides of the parallelogram, it is enough to consider those parallelograms whose interior is a fundamental domain of the lattice, i.e., those which do not contain any proper subparallelograms. So we assume that $(r_i, s_i) - (r_1, s_1) = \lambda_i$ for $i = 2, 3$ and $\lambda_i \in \Lambda_0$. By the identity
\[
\mu(1 \otimes u^{r_4}v^{s_4}) = \mu(1 \otimes u^{r_3}v^{s_3}) \mu(1 \otimes (uv)^{\lambda_2}) =  \mu(1 \otimes u^{r_2}v^{s_2}) \mu(1 \otimes (uv)^{\lambda_1}),
\]
we see that
 \[
 \tau\left(\sum_{\gamma \in \Delta}C^{\epsilon + \lambda}_{\gamma}C^{\gamma + \lambda'}_{\beta} - \sum_{\gamma \in \Delta}C^{\epsilon + \lambda'}_{\gamma}C^{\gamma + \lambda}_{\beta}\right) = 0.
 \]

This shows that $\tau$ is a well-defined $\mathbb{C}$-algebra homomorphism. In particular, in the case when $U = \mathrm{Hilb}^d_{\Delta}(Q)$ there is a morphism $\mathrm{Hilb}^d_{\Delta}(Q) \To \mathrm{Spec}(S'_{\Delta})$.

On the other hand, one checks in the same way that $\mathrm{Spec}(S'_{\Delta})$ parameterizes a family $\tilde{S'_{\Delta}}$ of zero-dimensional schemes of length $d$ in $Q$ with the property that $S'_{\Delta}[\Delta] \xhookrightarrow{\iota} S'_{\Delta} \otimes_{\mathbb{C}} S \rightarrow \mathcal{O}_{\tilde{S'_{\Delta}}}$ is surjective. Therefore there is a morphism $g: \mathrm{Spec}(S'_{\Delta}) \To \mathrm{Hilb}^d_{\Delta}(Q)$. One checks that for any triple $(U, \phi: U \To \mathrm{Hilb}^d_{\Delta}(Q)$ and $\tau: S'_{\Delta} \To \Gamma(U, \mathcal{O}_U))$ as above, the map $\phi$ factors as $\phi = g \circ \tau^*$.

We further reduce the dimension of the ambient affine space in the previous step. By repeatedly using the first collection of relations above, we can further reduce to the case where $(r_1, s_1) \in \Delta$.
\end{proof}
\end{prop}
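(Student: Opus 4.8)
\section*{Proof proposal for Proposition \ref{cab}}

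The plan is to realize $\mathrm{Hilb}^d_{\Delta}(Q)$ first as the spectrum of a ring presented by the \emph{same} two families of equations but with the upper index ranging over \emph{all} of $E(\Delta)$, and then to eliminate the superfluous variables so that only those indexed by the border $\mathcal{B}(\Delta)$ survive. By Lemma \ref{coordinate system} and the representability established in the preceding proposition, every monomial $u^rv^s$ with $(r,s)\in E(\Delta)$ reduces, modulo the universal ideal on $\mathrm{Hilb}^d_{\Delta}(Q)$, to a combination $\sum_{(a,b)\in\Delta}C^{r,s}_{a,b}u^av^b$; since $\mathcal{O}_U[\Delta]\to\mathcal{O}_Y$ is an isomorphism of locally free sheaves on each fiber, the coefficients $C^{r,s}_{a,b}$ are globally defined regular functions that generate the coordinate ring. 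Multiplying such a reduction by each semigroup generator $(uv)^{\lambda}$, $\lambda\in\Lambda_0$, and reducing again yields the first family of relations $C^{\alpha+\lambda}_{\beta}=\sum_{\gamma\in\Delta}C^{\alpha}_{\gamma}C^{\gamma+\lambda}_{\beta}$; comparing the two reductions of a single monomial reached by adding two generators in either order yields the second (commutativity) family.

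First I would check that the functions $C^{r,s}_{a,b}$, with $(r,s)$ now ranging over all of $E(\Delta)$, embed $\mathrm{Hilb}^d_{\Delta}(Q)$ as a closed subscheme of the infinite-dimensional affine space $\mathrm{Spec}(S')$ cut out exactly by the ideal $I'_{\Delta}$ generated by the two families with $\alpha\in E(\Delta)$ arbitrary, so that $\mathrm{Hilb}^d_{\Delta}(Q)\cong\mathrm{Spec}(S'_{\Delta})$. This is the functor-of-points verification already carried out above: for every family $Y/U$ with $\mathcal{O}_U[\Delta]\to\mathcal{O}_Y$ surjective, the coefficient assignment defines a $\mathbb{C}$-algebra map $\tau\colon S'_{\Delta}\to\Gamma(U,\mathcal{O}_U)$, while $\mathrm{Spec}(S'_{\Delta})$ carries a tautological length-$d$ family with the same surjectivity, giving a morphism $g$; one confirms $\tau$ and $g$ are mutually inverse. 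I would invoke this and only record the identification of the defining ideal with $I'_{\Delta}$.

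The substantive step is to pass from the $E(\Delta)$-presentation to the finite $\mathcal{B}(\Delta)$-presentation $S^{\Delta}=\mathbb{C}[C^{\alpha}_{\beta}]/I^{\Delta}$. Grade $E(\Delta)$ by the least number $h(r,s)$ of generators of $\Lambda_0$ needed to reach $(r,s)$ from $\Delta$, so that $h=1$ holds exactly on $\mathcal{B}(\Delta)$. Let $\bar\jmath\colon\mathbb{C}[C^{\alpha}_{\beta}\colon\alpha\in\mathcal{B}(\Delta)]\to S'_{\Delta}$ be induced by the inclusion of the border variables. By induction on $h$, the first family expresses every class $C^{r,s}_{a,b}$ in $S'_{\Delta}$ as a polynomial in border variables: writing $(r,s)=\alpha'+\lambda$ with $h(\alpha')<h(r,s)$ gives $C^{r,s}_{a,b}=\sum_{\gamma}C^{\alpha'}_{\gamma}C^{\gamma+\lambda}_{a,b}$, where $C^{\alpha'}_{\gamma}$ is already reduced by induction and each $C^{\gamma+\lambda}_{a,b}$ is either a constant (if $\gamma+\lambda\in\Delta$, by the stated conventions) or a border variable (if $\gamma+\lambda\in\mathcal{B}(\Delta)$). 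Hence $\bar\jmath$ is surjective, and since each generator of $I^{\Delta}$ is a special case of a generator of $I'_{\Delta}$, one has $I^{\Delta}\subseteq\ker\bar\jmath$.

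For the reverse inclusion I would build a retraction $\rho\colon S'_{\Delta}\to S^{\Delta}$ sending each $C^{r,s}_{a,b}$ to the border reduction just described, computed inside $S^{\Delta}$. The crux, and the step I expect to be the main obstacle, is the well-definedness of this reduction: the resulting polynomial must be independent of the chosen decomposition $(r,s)=\alpha'+\lambda$. This is a confluence (diamond-lemma) argument on the lattice $\Lambda$, and the two reduction paths are reconciled precisely by the second family, which holds in $S^{\Delta}$ by construction; the same relations show that $\rho$ annihilates $I'_{\Delta}$ and hence descends to $S'_{\Delta}$. Granting well-definedness, $\rho\circ\bar\jmath$ is the quotient map $\mathbb{C}[C^{\alpha}_{\beta}]\to S^{\Delta}$, so $\ker\bar\jmath\subseteq\ker(\rho\circ\bar\jmath)=I^{\Delta}$; together with $I^{\Delta}\subseteq\ker\bar\jmath$ and surjectivity this gives $S'_{\Delta}\cong\mathbb{C}[C^{\alpha}_{\beta}]/I^{\Delta}=S^{\Delta}$. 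A careful bookkeeping of the parallelogram relations, reducing to those whose base vertex lies in $\Delta$ so that the second family need only be imposed with $\epsilon+\lambda,\epsilon+\lambda'\in\mathcal{B}(\Delta)$, is what pins the final relation set down to exactly $I^{\Delta}$.
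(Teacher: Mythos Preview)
Your proposal is correct and follows essentially the same two-step strategy as the paper: first identify $\mathrm{Hilb}^d_{\Delta}(Q)$ with $\mathrm{Spec}(S'_{\Delta})$ via the functor-of-points argument, then eliminate the non-border variables using the first family of relations. Your treatment of the elimination step is in fact more detailed than the paper's---the paper simply asserts that ``by repeatedly using the first collection of relations above, we can further reduce to the case where $(r_1,s_1)\in\Delta$,'' whereas you make explicit the inductive height function, the surjectivity of the border inclusion, and the confluence argument needed for well-definedness of the retraction.
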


In particular, the proof of the preceding proposition shows the following
\begin{cor}
Notations as before, fixing a staircase $\Delta$, the $C^{\alpha}_{\beta}$ form a system of parameters at the $\mathbb{T}$-fixed point $Z_{\Delta}$ with monomial ideal $I_{\Delta}$. The maximal ideal $\mathfrak{m}_{\Delta} \coloneqq\langle C^{\alpha}_{\beta}\rangle$ defines this point $Z_{\Delta}$ as the origin of the affine space $\mathrm{Spec}(S^{\Delta}) = \mathrm{Spec}(\mathbb{C}[C^{\alpha}_{\beta}])$ via the embedding given in the previous proposition.
\end{cor}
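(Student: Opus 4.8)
The statement is a direct consequence of the explicit presentation obtained in Proposition \ref{cab}, so the plan is to unwind the definitions. First I would check that $[Z_\Delta]$ is a $\mathbb{T}$-fixed point lying in $\mathrm{Hilb}^d_\Delta(Q)$: the torus $\mathbb{T}$ acts on $Q$ through its scaling action on $\mathbb{A}^2 = \mathrm{Spec}(\mathbb{C}[u,v])$, the ideal $I_\Delta = \langle u^{\alpha}v^{\beta} \mid (\alpha,\beta)\in\mathcal{C}(\Delta)\rangle$ of $R$ is monomial hence $\mathbb{T}$-invariant, and $\mathbb{C}[\Delta] \to R/I_\Delta$ is the tautological isomorphism, so $\phi_{\Delta}(Z_\Delta)$ is surjective and $[Z_\Delta]\in\mathrm{Hilb}^d_\Delta(Q)$ is visibly fixed by $\mathbb{T}$.

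Next I would identify $[Z_\Delta]$ with the origin of $\mathrm{Spec}(\mathbb{C}[C^\alpha_\beta])$ under the closed embedding furnished by Proposition \ref{cab}. Recall that in the proof of that proposition the regular function $C^{r,s}_{a,b}$ on $\mathrm{Hilb}^d_\Delta(Q)$ is by construction the coefficient of $u^av^b$ in the reduction of the monomial $u^rv^s$ modulo the ideal of the parametrized subscheme. Evaluating at $[Z_\Delta]$: for $(r,s)\in E(\Delta)$ one has $u^rv^s\in I_\Delta$, so its reduction modulo $I_\Delta$ is $0$ and hence $C^{r,s}_{a,b}([Z_\Delta])=0$ for all $(a,b)\in\Delta$; since $\mathcal{B}(\Delta)\subset E(\Delta)$, the coordinates $C^\alpha_\beta$ ($\alpha\in\mathcal{B}(\Delta)$, $\beta\in\Delta$) surviving after the reductions of Proposition \ref{cab} are among these, and therefore all vanish at $[Z_\Delta]$. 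Equivalently, specializing every $C^\alpha_\beta$ to $0$ turns the generating family $f_i = u^{\alpha_i}v^{\beta_i} - \sum_{(a,b)\in\Delta}C^{\alpha_i,\beta_i}_{a,b}u^av^b$ of Lemma \ref{coordinate system} into $\{u^{\alpha_i}v^{\beta_i}\mid (\alpha_i,\beta_i)\in\mathcal{C}(\Delta)\}$, whose ideal is $I_\Delta$ (compare colengths, exactly as in Proposition \ref{cab}). Hence the closed point cut out by $\langle C^\alpha_\beta\rangle$ is precisely $[Z_\Delta]$.

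Finally, since Proposition \ref{cab} presents $S^\Delta$ as a quotient of the polynomial ring $\mathbb{C}[C^\alpha_\beta]$, the images of the $C^\alpha_\beta$ generate $S^\Delta$ as a $\mathbb{C}$-algebra; as they all vanish at $[Z_\Delta]$ they generate the maximal ideal $\mathfrak{m}_{[Z_\Delta]}\subset S^\Delta$, which is therefore exactly $\mathfrak{m}_\Delta$, and in particular they constitute a system of local parameters at the $\mathbb{T}$-fixed point $Z_\Delta$. I do not expect a genuine obstacle here; the only step that wants a line of care is verifying that the assignment $C^\alpha_\beta \mapsto 0$ actually satisfies all the relations generating $I^\Delta$, which is immediate from their shape together with the conventions $C^\beta_\beta = 1$ and $C^\alpha_\beta = 0$ for distinct $\alpha,\beta\in\Delta$: in each relation every term either lies in $\langle C^\alpha_\beta\rangle$ (because one factor is a border coordinate) or the two sums reduce, via $(\epsilon+\lambda)+\lambda' = (\epsilon+\lambda')+\lambda$, to the same single coordinate and cancel.
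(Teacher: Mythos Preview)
Your proposal is correct and follows exactly the approach the paper intends: the corollary in the paper is stated without a separate proof, merely as a consequence of the explicit presentation in Proposition~\ref{cab}, and your argument simply makes that deduction explicit by evaluating the coordinate functions $C^{\alpha}_{\beta}$ at $[Z_\Delta]$ and checking that the relations in $I^\Delta$ are satisfied at the origin. There is nothing to add.
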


\begin{cor}
Write $\widetilde{\mathrm{Hilb}}^d_{\Delta}(Q) \coloneqq \pi_d^{-1}(\mathrm{Hilb}^d_{\Delta}(Q))$ for the preimage in the universal family. Then as a (not necessarily reduced) subscheme of $\mathrm{Hilb}^d_{\Delta}(Q) \times Q$, $\widetilde{\mathrm{Hilb}}^d_{\Delta}(Q)$ is defined by $\tilde{\mathcal{I}_{\Delta}} = \langle u^rv^s - \sum_{(a, b) \in \Delta}C^{r, s}_{a, b}u^av^b \rangle$, where, by abuse of notation, $C^{r, s}_{a, b}$ are the regular functions on $\mathrm{Hilb}^d_{\Delta}(Q) \times Q$ that are pulled back from $\mathrm{Hilb}^d_{\Delta}(Q)$ and $u, v$ are the pullbacks from $Q$.
\begin{proof}
The proof of Proposition 2.9 in \cite{H98} works verbatim.
\end{proof}
\end{cor}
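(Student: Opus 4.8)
The plan is to leverage the representability already established in Proposition \ref{cab}: it produces a universal family $\widetilde{\mathrm{Hilb}}^d_{\Delta}(Q) \subset \mathrm{Hilb}^d_{\Delta}(Q) \times Q$ that is finite and flat of degree $d$, and whose structure sheaf is, by the very definition of the chart $\mathrm{Hilb}^d_{\Delta}(Q)$, freely generated over $\mathcal{O}_{\mathrm{Hilb}^d_{\Delta}(Q)} = S^{\Delta}$ by the $\Delta$-monomials $\{u^a v^b : (a,b) \in \Delta\}$. Indeed, surjectivity of $\phi_{\Delta}$ together with finiteness and flatness of degree $d$ forces the comparison map $\mathcal{O}[\Delta] \to \mathcal{O}_{\widetilde{\mathrm{Hilb}}}$ to be an isomorphism. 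Writing $J \subset S^{\Delta} \otimes_{\mathbb{C}} R$ for the true ideal of $\widetilde{\mathrm{Hilb}}^d_{\Delta}(Q)$, the goal is to prove $J = \tilde{\mathcal{I}}_{\Delta}$, and I would do this by establishing the two containments separately.

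For $\tilde{\mathcal{I}}_{\Delta} \subseteq J$ I would unwind the construction of the coordinate functions $C^{r,s}_{a,b}$ from the proof of Proposition \ref{cab}. Applied to the tautological family over $U = \mathrm{Spec}(S^{\Delta})$, with the classifying homomorphism $\tau$ taken to be the identity, that construction declares $C^{r,s}_{a,b} \in S^{\Delta}$ to be exactly the coefficient of $u^a v^b$ obtained by lifting $u^r v^s$ to the free $S^{\Delta}$-module $S^{\Delta}[\Delta]$; equivalently, the identity $u^r v^s = \sum_{(a,b) \in \Delta} C^{r,s}_{a,b} u^a v^b$ holds in $\mathcal{O}_{\widetilde{\mathrm{Hilb}}}$ for every $(r,s) \in E(\Delta)$. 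Thus each generator $u^r v^s - \sum_{(a,b) \in \Delta} C^{r,s}_{a,b} u^a v^b$ of $\tilde{\mathcal{I}}_{\Delta}$ maps to zero in $(S^{\Delta} \otimes R)/J$, which is the first containment.

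For the reverse containment I would argue by comparison of free modules rather than by direct computation. The generators of $\tilde{\mathcal{I}}_{\Delta}$ show that every monomial $u^r v^s$ is congruent modulo $\tilde{\mathcal{I}}_{\Delta}$ to an $S^{\Delta}$-combination of $\Delta$-monomials, so the $d$ elements $\{u^a v^b : (a,b) \in \Delta\}$ span $T \coloneqq (S^{\Delta} \otimes R)/\tilde{\mathcal{I}}_{\Delta}$ over $S^{\Delta}$. The quotient map $q \colon T \twoheadrightarrow \mathcal{O}_{\widetilde{\mathrm{Hilb}}}$ coming from $\tilde{\mathcal{I}}_{\Delta} \subseteq J$ carries this spanning set onto the free $S^{\Delta}$-basis of $\mathcal{O}_{\widetilde{\mathrm{Hilb}}}$ recorded in the first paragraph. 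The map $\mathcal{O}_{\widetilde{\mathrm{Hilb}}} \to T$ sending that basis back to the corresponding $\Delta$-monomials is then a surjective splitting of $q$, hence an isomorphism; therefore $q$ is an isomorphism and $J = \tilde{\mathcal{I}}_{\Delta}$, which is the assertion.

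The main obstacle, and the only point requiring genuine care, is the identification in the second step that the reduction constants of the universal family are literally the tautological coordinate functions $C^{r,s}_{a,b}$ on $S^{\Delta}$, together with the compatibility of these constants with multiplication by the generators $u^2, uv, v^2$ of $\Lambda_0$. This compatibility is precisely what the relations $I^{\Delta}$ of Proposition \ref{cab} encode, and it is what guarantees that $\sum_{(a,b) \in \Delta} C^{r,s}_{a,b} u^a v^b$ is well defined independently of how $u^r v^s$ is reduced through the staircase (cf.\ Lemma \ref{coordinate system}). Because $\mathrm{Hilb}^d_{\Delta}(Q)$ may be non-reduced, one cannot verify this fiber by fiber over closed points; the argument must be carried out at the level of the ring $S^{\Delta}$ and the flat family itself, which is exactly the content of the verbatim argument of \cite[Prop. 2.9]{H98}.
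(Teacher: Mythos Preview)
Your argument is correct and is precisely the argument that \cite[Proposition~2.9]{H98} gives for $\mathbb{A}^2$, transported verbatim to $Q$: identify the $\Delta$-monomials as a free $S^{\Delta}$-basis of the universal family, check the tautological congruences to get $\tilde{\mathcal{I}}_{\Delta}\subseteq J$, and then conclude equality by the splitting/rank comparison. There is nothing to add; this is exactly what the paper means by ``works verbatim.''
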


To prove Theorem \ref{blowup}, we work on each open subset $\mathrm{Hilb}^d_{\Delta}(Q)$ associated with a fixed staircase. We note that any $S_d$-alternating polynomial in $R_d$ is a linear combination of lattice discriminants, i.e., $B^1$ is $\mathbb{C}$-linearly spanned by the set of all lattice discriminants associated to subsets of $\Lambda$ of cardinality $d$. 

\begin{proof}[Proof of Theorem \ref{blowup}]
The proof of the theorem is almost identical to that of \cite[Proposition 2.6]{H98}. Suppose $\Delta = \{(a_i, b_i)\}_{i = 1, \dots, d}$ is a staircase and $D = \{(r_i, s_i)\}_{i = 1, \dots, d}$ is any subset of $\Lambda$ of cardinality $d$. We first work in the locus $U_{\Delta}$ of reduced schemes in $\mathrm{Hilb}^d_{\Delta}(Q)$, which is dense open since $\mathrm{Hilb}^d_{\Delta}(Q)$ is irreducible. For any point $[Z] \in U_{\Delta}$ the image $h([Z])$ is a 0-cycle of $d$ distinct points, hence $\det^2(\Delta)(h([Z])) \neq 0$ (we are taking the square of the lattice determinant since the determinant only defines an alternating function). 

By Lemma \ref{coordinate system} the lattice discriminants $\det(D)$ and $\det(\Delta)$ are related by
\[
h^*\left(\dfrac{\det(D) \det(\Delta)}{\det^2(\Delta)}\right) = \det\left(\begin{bmatrix}C^{r_i, s_i}_{a_j, b_j}\end{bmatrix}_{1 \leq i, j \leq d}\right),
\]
where $C^{r_i, s_i}_{a_j, b_j}$ are regular functions on $U_{\Delta}$. This implies that $h^*\left(\dfrac{\det(D) \det(\Delta)}{\det^2(\Delta)}\right)$ extends to a regular function on $\mathrm{Hilb}^d_{\Delta}(Q)$. Hence $h^*(B^2)$ is a principal ideal generated by $h^*(\det^2(\Delta))$ over $\mathrm{Hilb}^d_{\Delta}(Q)$. 

Note that $\mathrm{Proj}(\mathbf{B}) \cong \mathrm{Proj}(\mathbf{B}^{[2]})$, where $\mathbf{B}^{[2]} \coloneqq \bigoplus_{k = 2i, i \in \mathbb{N}}B^{k}$. By considering all the staircases $\Delta$ of cardinality $d$, the preceding paragraph shows that $\theta: \mathrm{Proj}(\mathbf{B}) \to Q^{(d)}$ is identified as the blow-up of $Q^{(d)}$ along the ideal $B^2 \subset B^0$. 

Now we use the universal property of blow-up to see that there is a projective morphism $f_{\Delta}: \mathrm{Hilb}^d_{\Delta}(Q) \to \mathrm{Proj}(\mathbf{B}^{[2]})|_{\theta^{-1}(h(\mathrm{Hilb}^d_{\Delta}(Q)))}$ over $h(\mathrm{Hilb}^d_{\Delta}(Q))$, which glues to a projective morphism $f: \mathrm{Hilb}^d(Q) \to \mathrm{Proj}(\mathbf{B}^{[2]})$. In fact, $f$ is birational and surjective since both sides of $f$ are irreducible, birational to $Q^{(d)}$, and projective over $Q^{(d)}$. 

To show that $f$ is in fact an isomorphism over $Q^{(d)}$ it remains to show that $f$ is an embedding. For that matter, it suffices to show that $f^*\mathcal{O}_{\mathrm{Proj}(\mathbf{B}^{[2]})} \to \mathcal{O}_{\mathrm{Hilb}^d(Q)}$ is surjective. Again we work locally over each open set $\mathrm{Hilb}^d_{\Delta}(Q)$. By Proposition \ref{cab} the system of functions $C_{a, b}^{r, s}$ generate the affine coordinate ring of $\mathrm{Hilb}^d_{\Delta}(Q)$ with $(a, b) \in \Delta$ and $(r, s) \in \Lambda \setminus \Delta$. Hence to show the surjectivity of $f^*$, it suffices to show that each $C_{a, b}^{r, s}$ is in the image of $f^*$. We take the subset $D$ of $d$ elements in $\Lambda$ by $D \coloneqq \Delta \setminus \{(a, b)\} \cup \{(r, s)\}$. Over $U_{\Delta}$ we have that $C_{a, b}^{r, s} = f^*\theta^*(\det(D)/\det(\Delta))$. This shows that $f$ is an embedding restricted to $U_{\Delta}$ and so as restricted to the closure of $U_{\Delta}$. Now we glue over all the staircases $\Delta$ to complete the proof.
\end{proof}

In light of the work of Haiman, we also mention some further geometric consequence of Theorem \ref{blowup}. Denote by $\mathcal{O}(1)$ the ample line bundle on $\mathrm{Hilb}^d(Q)$ associated with the blow-up structure given by Theorem \ref{blowup}. Note that there is the tautological bundle $\mathcal{T}_d \coloneqq (\pi_d)_*(\mathcal{O}_{\widetilde{\mathrm{Hilb}}^d(Q)})$ of rank $d$. 

\begin{cor}(cf. \cite[Proposition 2.12]{H98})
There is an identification of line bundles $\wedge^d \mathcal{T}_d \cong \mathcal{O}(1)$ on $\mathrm{Hilb}^d(Q)$.
\begin{proof}
Again the proof is identical to that of \cite[Proposition 2.12]{H98}, and we refer the reader to Haiman's paper for the details. 
\end{proof}
\end{cor}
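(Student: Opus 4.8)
The plan is to establish the isomorphism by comparing transition cocycles on the affine open cover $\{\mathrm{Hilb}^d_{\Delta}(Q)\}_{\Delta}$ of $\mathrm{Hilb}^d(Q)$ indexed by the staircases $\Delta \subset \Lambda$ of cardinality $d$, following \cite[Proposition 2.12]{H98}.

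First I would fix, once and for all, an ordering $\Delta = \{(a_1, b_1), \dots, (a_d, b_d)\}$ of each staircase. By the preceding description of $\widetilde{\mathrm{Hilb}}^d_{\Delta}(Q)$ through the universal ideal $\tilde{\mathcal{I}}_{\Delta}$, the restriction $\mathcal{T}_d|_{\mathrm{Hilb}^d_{\Delta}(Q)} = (\pi_d)_* \mathcal{O}_{\widetilde{\mathrm{Hilb}}^d_{\Delta}(Q)}$ is free with basis the monomials $u^{a_i}v^{b_i}$, $(a_i, b_i) \in \Delta$, so $\wedge^d \mathcal{T}_d$ is free over this chart with generator $e_{\Delta} := u^{a_1}v^{b_1} \wedge \cdots \wedge u^{a_d}v^{b_d}$. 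Over an overlap $\mathrm{Hilb}^d_{\Delta}(Q) \cap \mathrm{Hilb}^d_{\Delta'}(Q)$, reducing each $\Delta'$-monomial modulo the universal ideal writes it as $\sum_i C^{a_j', b_j'}_{a_i, b_i}\, u^{a_i}v^{b_i}$, whence $e_{\Delta'} = \det\bigl([C^{a_j', b_j'}_{a_i, b_i}]_{i, j}\bigr)\, e_{\Delta}$. Thus the \v{C}ech $1$-cocycle of $\wedge^d \mathcal{T}_d$ for this cover and these trivializations is the family of change-of-basis determinants, which I abbreviate $\det([C^{\Delta'}_{\Delta}])$.

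Next I would trivialize $\mathcal{O}(1)$ over the same cover. Here $\mathcal{O}(1)$ is the degree-one Serre twist of $\mathrm{Proj}(\mathbf{B})$, which is an ample line bundle because $\mathbf{B}$ is generated over $B^0$ by the finitely generated module $B^1$, pulled back to $\mathrm{Hilb}^d(Q)$ along the isomorphism $f$ of Theorem \ref{blowup}; I would emphasize that this is \emph{not} the degree-one twist of $\mathrm{Proj}(\mathbf{B}^{[2]})$, which is $\mathcal{O}(2)$ and would correspond to $(\wedge^d \mathcal{T}_d)^{\otimes 2}$. For the chosen ordering of $\Delta$ the lattice discriminant $\det(\Delta) \in B^1$ is a global section of $\mathcal{O}(1)$ whose nonvanishing locus $D_+(\det(\Delta))$ contains $f(\mathrm{Hilb}^d_{\Delta}(Q))$, by the proof of Theorem \ref{blowup}. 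Trivializing $\mathcal{O}(1)$ over $\mathrm{Hilb}^d_{\Delta}(Q)$ by the pullback of $\det(\Delta)$, the transition functions become $h^*\bigl(\det(\Delta')/\det(\Delta)\bigr)$, which is meaningful since $\det(\Delta')/\det(\Delta) = \det(\Delta')\det(\Delta)/\det(\Delta)^2$ is a ratio of elements of $B^2 \subseteq B^0$, hence a rational function on $Q^{(d)}$. The key input is the identity in the proof of Theorem \ref{blowup}: taking $D = \Delta'$ there, and using that a matrix and its transpose have the same determinant, gives $h^*\bigl(\det(\Delta')/\det(\Delta)\bigr) = \det([C^{\Delta'}_{\Delta}])$. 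That identity is stated over the dense open locus of reduced schemes, but both sides are regular functions on all of $\mathrm{Hilb}^d_{\Delta}(Q)$, which is integral, so it holds identically. Hence the \v{C}ech cocycle of $\mathcal{O}(1)$ coincides with that of $\wedge^d \mathcal{T}_d$, and the two line bundles are isomorphic. Equivalently, and perhaps most transparently, one defines a morphism $\wedge^d \mathcal{T}_d \to \mathcal{O}(1)$ chartwise by $e_{\Delta} \mapsto f^*\det(\Delta)$, and the entire content is that this is independent of the chosen staircase $\Delta$.

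I do not anticipate a genuine obstacle; the only real care is bookkeeping. Both $e_{\Delta}$ and $\det(\Delta)$ are defined only up to sign, depending on the chosen ordering of $\Delta$, so one must fix one ordering per staircase and use it consistently on both sides, so that the two cocycles agree exactly rather than up to a sign cocycle; and one must keep track of which $\mathcal{O}(1)$ is meant, as noted above. With these conventions in place the remainder of the argument is identical to \cite[Proposition 2.12]{H98}, whence the reference there.
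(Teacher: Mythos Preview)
Your proposal is correct and is precisely the argument the paper defers to: the paper's proof is nothing more than a pointer to \cite[Proposition 2.12]{H98}, and what you have written is a faithful transcription of Haiman's cocycle comparison to the affine cover $\{\mathrm{Hilb}^d_{\Delta}(Q)\}_{\Delta}$, using the identity $h^*(\det(\Delta')/\det(\Delta)) = \det([C^{\Delta'}_{\Delta}])$ supplied by the proof of Theorem \ref{blowup}. Your explicit attention to the sign/ordering convention and to distinguishing $\mathcal{O}(1)$ on $\mathrm{Proj}(\mathbf{B})$ from the twist on $\mathrm{Proj}(\mathbf{B}^{[2]})$ is exactly the bookkeeping Haiman's proof requires.
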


\section{Examples on more general surface singularities}

As one might immediately ask, what happens for surface singularities other than rational double points? We provide some evidence supporting the speculation that rational double points constitute the only class of surface singularities that can have irreducible Hilbert schemes. First we fix some notations. Let $S = k\llbracket x_1, \dots, x_e\rrbracket$ be the power series ring in $e$ variables ($e \geq 4$) and $I_0$ be the ideal of $S$ defining a rational surface singularity of embedding dimension $e$ at the closed point $p$. Write $\mathfrak{M}$ for the maximal ideal of $S$, $\bar{S} \coloneqq \mathrm{gr}_{\mathfrak{M}}(S)$ the polynomial ring, $R \coloneqq S/I_0$, $X = \mathrm{Spec}(R)$ for the surface, and the maximal ideal of $R$ is $\mathfrak{m}$. Hence one can talk about the associated graded $R$-modules with respect to $\mathfrak{m}$. By Wahl and Reimenschneider \cite{R74, W77}, there is a minimal set of generators $\{F_1, \dots, F_{e - 1} \}$ of $I_0$ such that $\{in(F_1), \dots, in(F_{e - 1})\}$ is a collection of quadrics subjecting to linear relations (in fact, Wahl showed that the associated graded ideal $\mathrm{gr}_{\mathfrak{M}}(I_0)$ with standard filtration has a minimal homogeneous $\bar{S}$-free resolution such that all syzygies are linear).

\begin{eg}
Let $e = 4$, then any minimal graded free resolution of the tangent cone of such a surface singularity looks like the one for the affine cone over the rational normal curve:
\[
0 \to \bar{S}(-3)^{\oplus 2} \to \bar{S}(-2)^{\oplus 3} \to \bar{S} \to \bar{S}/I \to 0,
\]
where the two linear syzygies can be obtained by repeating one row of the $2 \times 3$ matrix whose maximal minors give the equations of the cone.
\end{eg}

Suppose $Z$ is a closed subscheme of $X$ of finite length with ideal $I_Z$ in $R$. Then the associated $\mathfrak{m}$-graded Artinian $k$-algebra of $\mathcal{O}_Z = R/I_Z$ is
\[
\mathrm{gr}_{\mathfrak{m}}(R/I_Z) = (\mathrm{gr}_{\mathfrak{m}}R)/in(I_Z),
\]
where $in(I_Z)$ is the initial ideal of $I_Z$ with respect to powers of $\mathfrak{m}$. This graded algebra defines a zero-dimensional subscheme of the tangent cone of $X$ of the same length as $Z$. By sending a zero-dimensional quotient of $R$ to the associated graded ring, one defines a morphism 
\[
\pi_d: \mathrm{Hilb}^d(X) \to H^d(C(X)).
\]
Here $H^d(C(X))$ is the standard graded Hilbert scheme on the tangent cone $C(X)$ (a special case of multigraded Hilbert scheme \cite{HS04}).

We recall the construction of a non-smoothable component in the Hilbert scheme of at least 8 points in an affine space of dimension at least 4 (\cite[Section 5]{8points}). Consider local $\mathbb{C}$-algebras of Hilbert function $h = (1, 4, 3)$ giving rise to length 8 subschemes of $\mathbb{A}^4$ supported at a single point. They show that at a general such point, the Zariski tangent space has dimension less than 32, the expected dimension of the principal component. The general point of their explicit choice is defined by the ideal of the polynomial ring in the $x_i$'s:
\[
I = \langle x_1^2, x_1x_2, x_2^2, x_3^2, x_3x_4, x_4^2, x_1x_4 + x_2x_3 \rangle,
\]
and in arbitrary higher dimensional $\mathbb{A}^d$ for $d > 4$ by adding more variables:
\[
I = \langle x_1^2, x_1x_2, x_2^2, x_3^2, x_3x_4, x_4^2, x_1x_4 + x_2x_3 \rangle + \langle x_i \mid 4 < i \leq d \rangle.
\]
Moreover, the authors describe the components in detail:
\begin{itemize}
\item[1.] $\mathrm{Hilb}^8(\mathbb{A}^4)$ has exactly 2 irreducible components, the principal component $R_8^4$ and the component $G_8^4$ which parameterizes schemes whose local Hilbert function is $(1, 4, 3)$.
\item[2.] The intersection $W = R_8^4 \cap G_8^4$ is an integral divisor in $G_8^4$, and the Zariski tangent space of $\mathrm{Hilb}^8(\mathbb{A}^4)$ at any point in $W$ is 33-dimensional.
\item[3.] We have $G_8^4 \cong G_0 \times \mathbb{A}^4$ where $G_0$ parameterizes length 8 local algebras supported at a fixed point. It contains the graded Hilbert scheme with Hilbert function $(1, 4, 3)$ by forgetting the grading. It is isomorphic to the Grassmannian $\mathrm{Gr}(7, S_2)$.
\item[4.] At each closed point of $G_8^4$ one can take a vector space basis $Q_1, Q_2, Q_3$ of the quadratic forms in the local algebra. Each $Q_i$ can be represented by a symmetric $4 \times 4$ matrix $A_i$ for $i = 1, 2, 3$. (This can also be done globally by taking the three sections of the universal bundle on the Grassmannian.) Then $W$ as a subscheme of $G_8^4$ is defined by a single equation, that is the Pfaffian of the $12 \times 12$ matrix:
\[
\begin{bmatrix}
    0 & A_1 & -A_2 \\
    -A_1 & 0 & A_3 \\
    A_2 & -A_3 & 0
  \end{bmatrix}.
\]
In Pl\"ucker coordinates, this Pfaffian is an irreducible quadric. The divisor $W$ of smoothable schemes in $G_8^4$ is linearly equivalent to $2H$, where $H$ is an effective generator of $\mathrm{Pic}(G_8^4)$. In particular, this statement distinguishes when a length 8 scheme $Z$ in $\mathbb{A}^4$ with local Artinian structure sheaf $\mathcal{O}_Z$ of Hilbert function $(1, 4, 3)$ is smoothable: take three generators of $\mathcal{O}_Z(2)$, then $Z$ is smoothable if and only if the corresponding Pfaffian is degenerated. 
\end{itemize}

We consider rational surface singularities $(R, p)$ of embedding dimension 4. Let $P_1, P_2, P_3$ be the leading quadratic forms of the three power series in 4 variables defining $(X = \mathrm{Spec}(R), p)$ in $\mathbb{A}^4$. 

\begin{prop}
If $P_1, P_2$ and $P_3$ are general, then the Hilbert scheme $\mathrm{Hilb}^d(X)$ is reducible for $d \geq 8$. 
\begin{proof}
For the singularity given by $P_1, P_2, P_3$, if $H^8(C(X))$ is reducible, then $\mathrm{Hilb}^8(X)$ is also reducible. To show that $H^8(C(X))$ is reducible, it suffices to find a length 8 scheme $Z$ with $\mathcal{O}_Z(2)$ defining a closed point in this cycle $\mathrm{Gr}(4, R_2)$. 

A length $8$ subscheme $Z$ in the tangent cone $C(X)$ supported at $p$ is defined by $V + \mathfrak{m}^3$, where $V$ is a four dimensional subspace of the second graded summand $R_2 = \langle P_1, P_2, P_3 \rangle^{\perp}$. Then one can form the Pfaffian $Pf(V^{\perp})$ as above with respect to any three quadratic forms spanning $\mathcal{O}_Z(2) = V^{\perp} \subset R_2$. Note that the Grassmannian $\mathrm{Gr}(4, R_2)$ as a codimension 9 cycle in $\mathrm{Gr}(7, S_2)$ parameterizing 7-planes containing a fixed 3-plane $\langle P_1, P_2, P_3 \rangle$ represents $\sigma_{3, 3, 3}$ in the standard Schubert calculus notations. 

If $P_1, P_2, P_3$ are chosen general, the cycle $\sigma_{3, 3, 3}$ will not be contained in the degeneration locus of the Pfaffian. Hence $H^8(C(X))$ is reducible, and in turn $\mathrm{Hilb}^8(X)$ is reducible. There exists some length $8$ subscheme of $X$ that is not smoothable. Therefore there exists a non-smoothable subscheme of $X$ of any length $d \geq 8$, and $\mathrm{Hilb}^d(X)$ is reducible. 
\end{proof}
\end{prop}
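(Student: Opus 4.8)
The plan is to reduce the assertion to the case $d = 8$ and there to produce a single length $8$ subscheme of $X$ that is not smoothable, using the description of $\mathrm{Hilb}^8(\mathbb{A}^4)$ recalled above together with the Wahl--Riemenschneider structure of the tangent cone. The passage from $d = 8$ to arbitrary $d \geq 8$ is the standard one: given a length $8$ subscheme $Z_0$ supported at $p$ that is not a flat limit of reduced length $8$ subschemes, adjoin $d - 8$ distinct smooth points of $X$ disjoint from $\mathrm{Supp}(Z_0)$; the resulting length $d$ subscheme is again non-smoothable, since any smoothing of it would restrict to a smoothing of $Z_0$. So it is enough to exhibit one non-smoothable length $8$ subscheme of $X$.

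I would build such a subscheme directly from the tangent cone. Write $R = S/I_0$ with $S = \mathbb{C}\llbracket x_1, \dots, x_4 \rrbracket$ and $\mathfrak{m} \subset R$ the maximal ideal; by Wahl--Riemenschneider the tangent cone has the expected equations, so $\dim_{\mathbb{C}} \mathfrak{m}/\mathfrak{m}^2 = 4$ and $R_2 \coloneqq \mathfrak{m}^2/\mathfrak{m}^3$ has dimension $7$. For a $4$-dimensional subspace $V \subset R_2$, lift it to $\tilde{V} \subset \mathfrak{m}^2$ spanned by degree two representatives and set $I_{Z_V} \coloneqq \tilde{V} + \mathfrak{m}^3 \subset R$. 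Then $Z_V \subset X$ is supported at $p$, has length $1 + 4 + 3 = 8$, and $\mathrm{gr}_{\mathfrak{m}}(\mathcal{O}_{Z_V})$ has Hilbert function $(1,4,3)$ with degree two piece $\mathcal{O}_{Z_V}(2) = R_2/V$ of dimension $3$; a direct check with the Wahl--Riemenschneider generators shows that $Z_V$ also has local Hilbert function $(1,4,3)$ as a subscheme of $\mathbb{A}^4$. Since $X \hookrightarrow \mathbb{A}^4$, any smoothing of $Z_V$ inside $X$ is a fortiori a smoothing inside $\mathbb{A}^4$; so it suffices to choose $V$ for which $Z_V$ is \emph{not} smoothable in $\mathbb{A}^4$.

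Now the classification recalled above applies. Having local Hilbert function $(1,4,3)$, $Z_V$ is a point of the component $G_8^4 \cong \mathrm{Gr}(7, S_2) \times \mathbb{A}^4$, and it lies in the smoothable divisor $W$ if and only if the Pfaffian of the $12 \times 12$ block matrix built from the symmetric matrices of a basis of $\mathcal{O}_{Z_V}(2)$ vanishes. The subschemes $Z_V$ coming from $X$ are exactly those whose quadratic part is $\langle P_1, P_2, P_3 \rangle \oplus V \subset S_2$, a $7$-plane containing the fixed $3$-plane $\langle P_1, P_2, P_3 \rangle$; these are parametrized by the Schubert subvariety $\mathrm{Gr}(4, R_2) \subset \mathrm{Gr}(7, S_2)$ of class $\sigma_{3,3,3}$. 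So the whole statement comes down to showing $\sigma_{3,3,3} \not\subseteq W$: then its complement in $\mathrm{Gr}(4, R_2)$ is a nonempty open locus of $V$'s whose $Z_V$ is non-smoothable in $\mathbb{A}^4$, hence in $X$, and $\mathrm{Hilb}^8(X)$ --- hence $\mathrm{Hilb}^d(X)$ for all $d \geq 8$ --- is reducible. Since $W$ is an irreducible quadric in Pl\"ucker coordinates, $W \sim 2H$ with $H$ the Pl\"ucker class, and $\mathrm{Pic}(\mathrm{Gr}(4, R_2))$ is generated by the ample restriction of $H$; thus the restricted Pfaffian is a section of an ample line bundle on $\sigma_{3,3,3}$, hence either identically zero or vanishing on a proper subvariety, and the claim is exactly that it is not identically zero.

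The genuine obstacle is controlling the position of $\sigma_{3,3,3}$ relative to $W$, since $P_1, P_2, P_3$ are far from generic in $S_2$: by Wahl--Riemenschneider they are the $2 \times 2$ minors of a $2 \times 3$ matrix of linear forms, so $C(X)$ is the cone over a twisted cubic (or an admissible degeneration of it). The clean way around this is to use that $W$ is invariant under the $\mathrm{GL}_4$-action on $S_2$, so whether $\sigma_{3,3,3}(\langle P_1, P_2, P_3\rangle) \subseteq W$ depends only on the $\mathrm{GL}_4$-orbit of $\langle P_1, P_2, P_3\rangle$; one then verifies on a single explicit configuration (the standard twisted cubic) that the restriction of the Pfaffian to $\sigma_{3,3,3}$ is a nonzero quadratic form in the Pl\"ucker coordinates. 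This is the one genuinely computational input, and ``general $P_1, P_2, P_3$'' should be read as general among the admissible configurations. Granting it, the incidence-correspondence observation that the cycles $\sigma_{3,3,3}$ sweep out $\mathrm{Gr}(7, S_2) \not\subseteq W$ finishes the argument, and the remainder is bookkeeping with the two cited results.
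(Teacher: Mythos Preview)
Your proposal is correct and follows essentially the same route as the paper: both construct the length~$8$ subschemes $Z_V$ from $4$-planes $V \subset R_2$, identify their locus with the Schubert cycle $\sigma_{3,3,3} \subset \mathrm{Gr}(7,S_2)$, and invoke the Pfaffian criterion from \cite{8points} to conclude that for general $P_1,P_2,P_3$ this cycle is not contained in the smoothable divisor $W$; the passage to $d \geq 8$ by adjoining smooth points is also the same. The one difference is that you argue directly via non-smoothability in $\mathbb{A}^4$ (smoothable in $X$ $\Rightarrow$ smoothable in $\mathbb{A}^4$), whereas the paper phrases it through the reducibility of the graded Hilbert scheme $H^8(C(X))$ and the map $\pi_8$; your framing is arguably cleaner. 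You are also more careful than the paper about the Wahl--Riemenschneider constraint on the $P_i$: the paper simply asserts that general $\langle P_1,P_2,P_3\rangle$ gives $\sigma_{3,3,3} \not\subseteq W$ without addressing that the $P_i$ are not generic quadrics, and defers the explicit twisted-cubic verification to the next proposition, whereas you correctly flag this as the genuine computational input and propose handling it via $\mathrm{GL}_4$-invariance and a single explicit check.
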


In the simplest case of the cone over a rational normal curve in $\mathbb{P}^3$ we can see the reducibility of the Hilbert scheme concretely. Take $\bar{S} = \mathbb{C}[x, y, z, w]$ and $I = \langle xz - y^2, xw - yz, yw - z^2 \rangle$ to be the ideal of the affine cone, $R = \bar{S}/I$, and $X = \mathrm{Spec}(R)$ to be the cone. The following construction is a simple modification from the example of \cite{8points}. 

\begin{prop}\label{twisted}
The Hilbert scheme $\mathrm{Hilb}^8(X)$ is reducible.
\begin{proof}
Define an ideal of $\bar{S}$:
\[
J_0 = \langle x^2, xy, xz - y^2, xw - yz, yw - z^2, zw, w^2 \rangle.
\]
In particular, note that the scheme $Z = \mathrm{Spec}(\bar{S}/J_0)$ still has length $8$ since $\langle x, y, z, w \rangle^3 \subset J_0$, and it is scheme-theoretically embedded in the singular surface $X$ since $I \subset J_0$.

Now we compute the Zariski tangent space of $\mathrm{Hilb}^8(\mathbb{A}^4)$ at $[Z]$: choose a $\mathbb{C}$-linear space basis of $\mathcal{O}_Z$ as $\{1, x, y, z, w, xw, xz, yw\}$. Then an $S$-linear homomorphism $\phi: J_0 \to S/J_0$ can be represented by a table of the form
\begin{center}
      \begin{tabular}[b]{ c | c c c c c c c}
    & $x^2$ & $xy$ & $xz - y^2$ & $xw - yz$ & $yw - z^2$ & $zw$ & $w^2$ \\ \hline
    1 &  0 & 0 & 0 & 0 & 0 & 0 & 0  \\
    $x$ & $b_2 + c_3$ & 0 & $c_1$ & 0 & 0 & $f_1$ & 0 \\
    $y$ & $a_2$ & $b_2$ & $-c_3$ & $c_1 - 2c_3$ & 0 & $f_2$ & $f_1$ \\ 
    $z$ & $a_3$ & $a_2$ & $c_3$ & 0 & $-2c_3$ & $f_3$ & $f_2$ \\
    $w$ & 0 & $a_3$ & 0 & $-c_3$ & $c_3$ & $c_1 - 2c_3$ & $f_3$ \\
    $xw$ & * & * & * & * & * & * & * \\
    $xz$  & * & * & * & * & * & * & * \\
    $yw$  & * & * & * & * & * & * & * \\
  \end{tabular}
\end{center}
where the *'s and $a_2, b_2, a_3, c_1, c_3, f_1, f_2, f_3$ in the table can take arbitrary values. The total number of such free entries in the table counts the dimension $\dim T_{[Z]}\mathrm{Hilb}^8(\mathbb{A}^4) = 21 + 8 = 29 < 4 \times 8 = 32$. We conclude that the closed point $[Z]$ does not lie on the main component of $\mathrm{Hilb}^8(\mathbb{A}^4)$ (not even the intersection of the main component with other components). But $[Z] \in \mathrm{Hilb}^8(X)$ as a closed point, and if $\mathrm{Hilb}^8(X)$ were irreducible then $\mathrm{Hilb}^8(X)$ is contained in the main component of $\mathrm{Hilb}^8(\mathbb{A}^4)$. This shows that $\mathrm{Hilb}^8(X)$ is reducible.
\end{proof}
\end{prop}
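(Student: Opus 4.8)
The plan is to exhibit one explicit length-$8$ subscheme $Z$ of $X$, supported at the vertex $p$, whose point $[Z]$ in $\mathrm{Hilb}^8(\mathbb{A}^4)$ has Zariski tangent space of dimension strictly smaller than $32=\dim R_8^4$. Granting this, the reducibility of $\mathrm{Hilb}^8(X)$ follows: a dense open subset of $\mathrm{Hilb}^8(X)$ --- were it irreducible --- would consist of reduced, hence smoothable, subschemes, forcing all of $\mathrm{Hilb}^8(X)$ into the closed principal component $R_8^4\subseteq\mathrm{Hilb}^8(\mathbb{A}^4)$, which $[Z]$ is not contained in.

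First I would write down the ideal
\[
J_0=\langle x^2,\ xy,\ xz-y^2,\ xw-yz,\ yw-z^2,\ zw,\ w^2\rangle\subset\bar S=\mathbb{C}[x,y,z,w],
\]
obtained from the ideal $I=\langle xz-y^2,\ xw-yz,\ yw-z^2\rangle$ of the cone by adjoining the monomials $x^2, xy, zw, w^2$, and set $Z=\mathrm{Spec}(\bar S/J_0)$. Two checks secure the construction: $I\subseteq J_0$, so that $Z$ is scheme-theoretically embedded in $X$; and every degree-$3$ monomial lies in $J_0$, so that $\mathfrak M^3\subseteq J_0$ and $\bar S/J_0$ is a local Artinian algebra of length $\dim_{\mathbb C}\bar S/J_0=8$ with monomial basis $\{1,x,y,z,w,xz,xw,yw\}$. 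Hence $[Z]$ defines a closed point of $\mathrm{Hilb}^8(X)\subseteq\mathrm{Hilb}^8(\mathbb{A}^4)$, whose associated graded algebra has Hilbert function $(1,4,3)$.

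The technical core is the computation of $T_{[Z]}\mathrm{Hilb}^8(\mathbb{A}^4)\cong\mathrm{Hom}_{\bar S}(J_0,\bar S/J_0)$. I would fix the seven displayed generators $g_1,\dots,g_7$ of $J_0$, determine a minimal generating set of the first syzygies among them (whose coefficients all lie in $\mathfrak M$, as for any minimal presentation), and then encode a homomorphism $\phi\colon J_0\to\bar S/J_0$ as the $7\times 8$ table of coefficients of $\phi(g_1),\dots,\phi(g_7)$ in the basis above, the only constraints being that $\phi$ kill each syzygy. Since $\mathfrak M^3$ annihilates $\bar S/J_0$ and the socle of $\bar S/J_0$ is exactly its top-degree piece $\langle xz,xw,yw\rangle$, a syzygy coefficient times a socle coefficient always vanishes; hence the coefficients of $xz,xw,yw$ in each $\phi(g_i)$ are unconstrained, giving $3\times 7=21$ free parameters. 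Resolving the linear relations among the remaining coefficients --- those of $1,x,y,z,w$ --- leaves exactly $8$ additional free parameters, so $\dim_{\mathbb C}T_{[Z]}\mathrm{Hilb}^8(\mathbb{A}^4)=29$.

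To conclude, recall from \cite{8points} that the principal component $R_8^4\subseteq\mathrm{Hilb}^8(\mathbb{A}^4)$, the closure of the locus of $8$ distinct points, is irreducible and closed of dimension $32$. If $[Z]$ lay on $R_8^4$ we would get $\dim T_{[Z]}\mathrm{Hilb}^8(\mathbb{A}^4)\geq\dim T_{[Z]}R_8^4\geq 32$, contradicting the value $29$; hence $[Z]\notin R_8^4$. On the other hand, the locus in $\mathrm{Hilb}^8(X)$ of reduced subschemes supported on $X\setminus\{p\}$ is open (reducedness is an open condition, as is being supported away from $p$) and nonempty, and every such subscheme is smoothable, so this locus lies inside the closed set $R_8^4$. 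If $\mathrm{Hilb}^8(X)$ were irreducible this locus would be dense, forcing $\mathrm{Hilb}^8(X)\subseteq R_8^4$ and in particular $[Z]\in R_8^4$, a contradiction; so $\mathrm{Hilb}^8(X)$ is reducible. The main obstacle is the tangent-space count: one must pin down the minimal free resolution of $J_0$ and carefully track which homomorphisms into $\bar S/J_0$ survive the syzygy relations --- a finite but somewhat delicate linear-algebra computation --- while the rest of the argument is formal.
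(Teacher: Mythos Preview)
Your proposal is correct and follows essentially the same approach as the paper: the same ideal $J_0$, the same basis of $\mathcal{O}_Z$, the same tangent-space count $21+8=29<32$, and the same contrapositive argument that irreducibility of $\mathrm{Hilb}^8(X)$ would force $[Z]\in R_8^4$. Your explanation of why the $21$ socle coefficients are automatically free (syzygy coefficients lie in $\mathfrak{M}$ and annihilate the socle) is a nice clarification that the paper leaves implicit in its table.
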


% ----------------------------------------------------------------
\bibliographystyle{amsalpha}

\end{document}